\newtheorem{theorem}{Theorem}[section]
\newtheorem{sketch-theorem}{sketch-Theorem}
\newtheorem{proposition}[theorem]{Proposition}
\newtheorem{corollary}[theorem]{Corollary}
\newtheorem{lemma}[theorem]{Lemma}
\newtheorem{conjecture}[theorem]{Conjecture}
\theoremstyle{remark}
\newtheorem{remark-number}[theorem]{Remark}
\newtheorem*{remark}{{\bf{Remark}}}
\newcommand{\Z}{{\mathbb{Z}}}
\newcommand{\F}{{\mathbb{F}}}
\newcommand{\E}{{\mathbb{E}}}
\newcommand{\Prob}{{\mathbb{P}}}
\begin{document}

\title{Caps and progression-free sets in $\Z_m^n$}
\author{Christian Elsholtz, P\'{e}ter P\'{a}l Pach}

\date{\today}

\maketitle

\begin{abstract}
We study progression-free sets in the abelian groups $G=(\Z_m^n,+)$.
Let $r_k(\Z_m^n)$ denote the maximal size of
a set $S \subset \Z_m^n$ that does not contain a proper arithmetic
progression of length $k$. We give lower bound constructions, which e.g.~include that 
$r_3(\Z_m^n) \geq C_m \frac{((m+2)/2)^n}{\sqrt{n}}$, when $m$ is even. When $m=4$ this is of order at least $3^n/\sqrt{n}\gg \vert G \vert^{0.7924}$.
Moreover, if the progression-free set
$S\subset \Z_4^n$ satisfies a technical 
condition, which dominates the problem at least in low dimension, then $|S|\leq 3^n$ holds.

We present a number of new methods which cover lower bounds
for several infinite families of parameters $m,k,n$, which includes for example: $r_6(\Z_{125}^n) \geq (85-o(1))^n$.

For $r_3(\Z_4^n)$ we determine the exact values, when $n \leq 5$,
e.g.~$r_3(\Z_4^5)=124$,
and for $r_4(\Z_4^n)$ we determine the exact values, when $n \leq 4$,
e.g.~$r_4(\Z_4^4)=128$.
\end{abstract}

\section{Introduction}

There has been great interest in finding progression-free sets in $\Z_m^n:=(\Z/(m \Z))^n$, especially when $m=3$ or $4$. 
When $m=3,4,5$ the properties
``no arithmetic progression of length $3$ modulo $m$'' and ``no $3$ points on  any line'' are equivalent. The last property is also well known under the name cap-sets.
In spite of this great interest in progression-free sets and caps there is not much literature on progression-free sets in 
$\Z_m^n$, in the case of general $m>3$, and of general progressions of length $k$, and hardly any explicit values of the maximal size of such sets is known.\footnote{There is certainly an extensive literature in the related area
of finite geometry over finite fields, but in literature from an additive combinatorics point of view we are essentially aware of an exercise in the book by Tao and Vu, and a paper by Lin and Wolf, details below.}

This paper intends to fill this gap and provides several new techniques to find lower bounds, and even to find exact values in the case $m=4$, which are comparable to the known values for $m=3$.

However, before we come to this, we briefly summarize a number of related questions.
The problem of finding sets $S \subset \Z_m^n$ with, or without, 
a given property has been investigated frequently. Often one is actually 
interested in the maximal size of $|S|$. Also, often even the one-dimensional
case has been of fundamental interest.
Let us recall some of the properties that have been investigated.
\begin{enumerate}[\hspace{8pt}]
\item[1)]
 Erd\H{o}s and Tur\'{a}n \cite{ErdosandTuran:1936} raised the problem
of studying the maximal size $r_k(N)$
of sets in $\{1, \ldots, N\}$ without an arithmetic progression of 
length $k$. There are important contributions by
Behrend, Bloom and Sisask, Bourgain, Gowers, Green, Roth, Salem and Spencer, Sanders, 
Szemer\'{e}di, Tao
\cite{Behrend:1946, Bourgain:1999, Gowers:2001, Green-Tao:2009, Roth:1953, SalemandSpencer:1942,
Sanders:2011, Szemeredi:1975}. In particular, the proof of $r_k(N)=o(N)$, 
as $N$ tends to infinity, and quantitative versions thereof, proved to be very
influential in this area.
It is interesting to note that
the size of progression-free sets even enters the complexity of matrix
multiplication, see \cite{CoppersmithandWinograd:1990, Williams:2012}.

The question of arithmetic progressions has also been studied modulo $m$, see e.g.~Croot \cite{Croot:2008}.
In this setting ``modulo $m$'' one has to clarify if elements of the progression
can occur more than once. For example $(1,3,1,3)$ can possibly be considered 
as a progression of length $4$ modulo $m=4$. In this paper, however, 
we study ``proper arithmetic progressions''
meaning that all elements in the progression are {\emph{distinct}}, unless otherwise stated.

\item[2)]
Assume that $S$ does not have $k$ elements $x_1, \ldots , x_k\in \Z_m^n$
that satisfy (for fixed constants $a_1, \ldots , a_k \in \Z$)
a linear equation 
\[a_1 x_1 + a_2x_2 + \cdots + a_kx_k=0 \in \Z_m^n.\]
\begin{enumerate}
\item
The case $n=1, k=m, a_1=a_2 = \cdots =a_k=1$ was first investigated by 
Erd\H{o}s, Ginzburg and Ziv \cite{Erdos-Ginzburg-ZiV:1961}, 
who proved that for any $2m-1$ elements in 
$\Z_m$, where in this problem repetition is allowed, 
there exists a subset of $m$ elements with sum 
$0 \in\Z_m$. (There are hundreds of papers on generalizations and variants,
the general topic is called ``zero sums in finite abelian groups'').
In the case $n=2$ there has been important work by Reiher \cite{Reiher:2007}.
The multidimensional case with $n \geq 3$ is widely open,
even though there are lower bounds by
Edel, Elsholtz et al. \cite{Elsholtz:2004, 
Edel-Elsholtz:2007, Edel:2008}, and upper bounds
by Alon and Dubiner \cite{AlonandDubiner:1995}, 
Naslund \cite{Naslund} and Heged\"{u}s \cite{Hegedus}.
\item The case $x_1+x_2-x_3=0, x_i \in S$ corresponds to sum-free sets. In the one-dimensional
case $S \subset \{1, \ldots , m\}$ it
is known that the maximal size is 
$|S| \leq \lfloor \frac{m}{2}\rfloor +1$, if all $x_i$ are distinct, or
$|S| \leq \lfloor \frac{m+1}{2}\rfloor$ if $x_1=x_2$ is allowed.
In the case modulo a prime $m$ it follows from the Cauchy-Davenport theorem
that the maximal size satisfies $|S|\leq \frac{m+1}{3}$ ($x_i$ all distinct).

In the multidimensional case of an integer grid there are results by 
Cameron \cite{Cameron}, Elsholtz and Rackham \cite{Elsholtz-Rackham:2017}.
\end{enumerate}
\item[3)] The case of no geometric line (of $m$ points)
in the integer grid $\{1, \ldots , m\}^n$ is known as Moser's cube problem,
see \cite{Moser:1970, Polymath}. Closely related is the question of finding the maximal number of lattice points in the same cube
$\{1, \ldots , m\}^n$, but without any combinatorial line.
The famous theorem of Hales-Jewett \cite{Hales-Jewett} of $o(m^n)$ points, when $m$ is fixed and $n$ tends to infinity, became very influential.

\end{enumerate}
In this paper we concentrate on sets $S\subset \Z_m^n$ of maximal size $|S|=r_k(\Z_m^n)$
with no $k\leq m$ elements in arithmetic progression. 
Observe that an arithmetic progression of length $k$ 
can be expressed by means of $k-2$ linked linear conditions $x_i-2x_{i+1}+x_{i+2}=0, i=1,
\ldots, k-2$.

The multidimensional case of no 3 points in 
arithmetic progression has frequently 
been studied, especially modulo $m=3$. 
Here the questions of ``no zero sums $x_1+x_2+x_3=0$''
and ``no arithmetic progression $x_1+x_3=2x_2$'' turn out to be equivalent as
$1\equiv -2 \bmod 3$.
The problem is known as the ``cap set problem''.
There were important contributions by Brown and Buhler \cite{BrownandBuhler:1982},
Frankl, Graham and R\"{o}dl \cite{Frankl-Graham-Rodl:1987},
Meshulam \cite{Meshulam:1995}, Lev \cite{Lev:2004}, 
Bateman and Katz \cite{BatemanandKatz:2012},
Croot, Lev and Pach \cite{Croot-Lev-Pach},
Ellenberg and Gijswijt \cite{EllenbergandGijswijt:2016}.

For a long time it was 
an important open problem if there is a $\delta>0$ such that
 $r_k(\Z_m^n) <(3-\delta)^n$ holds, for all progression-free sets $S\subset \Z_3^n$.
Various authors mentioned this statement with varying degree of certainty or doubt,
(see Alon and Dubiner \cite{AlonandDubiner:1993}, \cite{AlonandDubiner:1995},
Green \cite{Green:2005}, Kalai \cite{Kalai:blog},
Edel \cite{Edel:2004}, Tao \cite{Tao:2007})
until the solution by
Croot, Lev and Pach \cite{Croot-Lev-Pach} (when $m=4$), and finally
Ellenberg and Gijswijt \cite{EllenbergandGijswijt:2016}.


Meshulam's \cite{Meshulam:1995} long-standing bound $r_3(\Z_m^n)=O(\frac{m^n}{n})$ for odd values of $m\geq 3$ was extended by Lev \cite{Lev:2004} to even values $m\geq 4$.
Improving this,
Sanders \cite{Sanders:2009}
proved the following result:
\[ r_3(\Z_4^n) = O\left( \frac{4^n}{n \log^{c} n} \right),\]
for some positive $c$. Green and Tao \cite{Green-Tao:2009} write that $c=2^{-22}$ is admissible.
Introducing an entirely new approach, based on the polynomial method rather than Fourier techniques,
Croot, Lev and Pach \cite{Croot-Lev-Pach} proved
that
\[ r_3(\Z_4^n)\leq 4^{\gamma n}= 3.61\ldots^n,\]
where $\gamma \approx 0.926$.
The new methods introduced in  \cite{Croot-Lev-Pach}
also led to the result
in the case $r_3(\Z_3^n)\leq 2.756^n$ by Ellenberg and Gijswijt
\cite{EllenbergandGijswijt:2016}.
Again, the case of cap sets has applications to the complexity of matrix multiplication, see
\cite{Alon-Shpilka-Umans:2013, Blasiak-Church-etal}.

The corresponding problem on lower bounds of progression-free sets 
in $G=(\Z_3^n,+)$ has also been studied in detail. It is known (see \cite{Edel:2004} for the history and current record)
that there is a set $S$ with 
$\vert S \vert > 2.217389^n= \vert G \vert^{\beta}$ with 
$\beta= \frac{\log 2.217389}{\log 3}\approx 0.724851)$. The currently 
strongest lower bound example comes from a
product construction, based on an example in dimension 480. 
 
For a lower bound when $m=4$ Sanders \cite{Sanders:2009} proved:
there exists $S\subset G=(\Z_4^n,+)$ which does not
contain a proper three term
arithmetic progression with 
\[ \vert S  \vert \gg \vert G \vert^{2/3}\approx 2.519^n.\]
This result follows  from finding an example in $\Z_4^3$  with $16$ 
elements and using a product construction. (Note that $\sqrt[3]{16}=2.519\ldots$.)

The following is known:
\[ 2.2174\ldots ^n \leq r_3(\Z_3^n) \leq 2.756\ldots ^n, 
\cite{Edel:2004,EllenbergandGijswijt:2016}\]
\[ 2.519\ldots^n \ll r_3(\Z_4^n) \leq 3.62\ldots^n, \cite{Sanders:2009,Croot-Lev-Pach},\]
and for primes $p\geq 3$ and some positive constant $\delta_p$
\[  r_3(\Z_p^n) \leq (p-\delta_p)^n, \cite{EllenbergandGijswijt:2016}.\]
\begin{remark-number}
From this one can conclude that for every $m \geq 3$ there 
exists a positive $\delta_m$ such that
\begin{equation}\label{consequence-for-all-m}
    r_3(\Z_m^n) \leq (m-\delta_m)^n. 
\end{equation}
For powers of $2$ this follows from \cite{Croot-Lev-Pach}, all other values have an odd prime factor such that it follows from
\cite{EllenbergandGijswijt:2016}.
\end{remark-number}
There are only very few explicit values known. 
In the case of cap sets modulo $m=3$ the following is known:
\[r_3(\Z_3^1)=2, r_3(\Z_3^2)=4,
r_3(\Z_3^3)=9, r_3(\Z_3^4)=20,r_3(\Z_3^5)=45, r_3(\Z_3^6)=112.\]
The author of the 6-dimensional result (Potechin \cite{Potechin:2008}), and the authors of the {\em classification} of the unique
5-dimensional maximum cap \cite{Edel-Ferret:2002},
 (required for the 6-dimensional case by Potechin) mentioned they used computer calculations. We would like to thank Y.~Edel for informing us that for the  paper
\cite{Edel-Ferret:2002} the computation time was a few weeks.

The remaining part of the paper is organized as follows:
After some necessary notation and describing the results we first prove the asymptotic lower bounds in Section~\ref{secproof}, as these proofs are shorter. In Section~\ref{sec:reformulation} we give a reformulation for the problem of finding $r_3(\mathbb{Z}_4^n)$ and $r_4(\mathbb{Z}_4^n)$. In Section~\ref{sec:const} we give a lower bound for $r_3(\mathbb{Z}_4^n)$,
we then prove that this construction gives the exact values
up to dimension 5 (Sections~\ref{sec3ap1} and \ref{sec3ap2}), which require some detailed case studies. Finally, in Section~\ref{sec4ap} we prove the exact values for $r_4(\mathbb{Z}_4^n)$ up to dimension 4.

\section{Notation}

We use the Landau $O$ and $o$-notation such as $f(n)=O_t(g(n))$, where the $O$-constant depends at most on a parameter $t$. We also use the Vinogradov notation, where $f(n)\ll_t g(n)$ or $g(n)\gg_t f(n)$ has the same meaning as the $O$-expression above.

In Sections \ref{sec3ap1}, \ref{sec3ap2}, \ref{sec4ap} we will work with linear and affine subspaces of $\mathbb{F}_2^n$. If $L$ is a linear subspace of dimension $d$, for brevity we will say that $L$ is a $d$-subspace. The smallest linear  subspace containing the vectors $v_1,\dots,v_k$ will be denoted by $\langle v_1,\dots,v_k\rangle$. 

Similarly, if $L$ is an affine subspace of dimension $d$, we will say that $L$ is an affine $d$-subspace and the smallest affine subspace containing $v_1,\dots,v_k$ will be denoted by $\langle v_1,\dots,v_k\rangle_{aff}$.


Throughout the paper for a subset $A\subseteq\mathbb{F}_2^n$ we use the notation $A+A=\{a+a':a,a'\in A\}$ for the sumset and $A\hat{+}A=\{a+a':a,a'\in A,a\ne a'\}$ for the restricted sumset.

\section{Results and methods}
\begin{theorem}\label{3APthm}
For sets without arithmetic progression of length $3$ 
we have the following results:
\[r_3(\Z_4^1)=2, r_3(\Z_4^2)=6, r_3(\Z_4^3)=16, r_3(\Z_4^4)=42,
r_3(\Z_4^5)=124.\]
\end{theorem}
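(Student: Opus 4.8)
The plan is to establish each of the five equalities by a combination of explicit constructions (for the lower bounds) and exhaustive-but-structured case analysis (for the upper bounds), exploiting the reformulation of the problem over $\mathbb{F}_2^n$ promised in Section~\ref{sec:reformulation}. The small cases $r_3(\Z_4^1)=2$ and $r_3(\Z_4^2)=6$ should follow by a direct finite check: there are only $4$ and $16$ points respectively, and one enumerates maximal progression-free sets by hand or notes that a progression $x,x+d,x+2d$ with $d\ne 0$ in $\Z_4^n$ degenerates only when $2d=0$, i.e.\ $d\in\{0,2\}^n$, so the condition ``$S$ is $3$-AP-free'' is equivalent to ``$S$ avoids $x, x+d, x+2d$ for all admissible $d$''; a brute-force argument pins down $2$ and $6$. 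For $n=3$, the value $16$ is already known (Sanders used a $16$-element example in $\Z_4^3$), so the work there is the matching upper bound $r_3(\Z_4^3)\le 16$, which again is small enough to be handled by the structural reformulation plus a short case check.

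The substantive content is $n=4$ and $n=5$. For the lower bounds $r_3(\Z_4^4)\ge 42$ and $r_3(\Z_4^5)\ge 124$ I would use the explicit construction of Section~\ref{sec:const}; one simply exhibits sets of the claimed sizes and verifies directly that they contain no proper $3$-term AP. (If the general construction of that section gives $\ge 42$ and $\ge 124$ in these dimensions, this is immediate; otherwise a small ad hoc augmentation, checked by the reformulation, closes the gap.) For the upper bounds, the strategy is the one flagged in the roadmap: translate a $3$-AP-free set $S\subset\Z_4^n$ into data over $\mathbb{F}_2^n$ — roughly, writing elements of $\Z_4$ via their $2$-adic digits, a $3$-AP $x,x+d,x+2d$ with $2d\ne 0$ forces a forbidden configuration controlled by the ``top digit'' layer, so that the extremal problem becomes a question about a family $A\subseteq\mathbb{F}_2^n$ together with the restricted sumset $A\mathbin{\hat{+}}A$ and affine subspaces spanned by few points. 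One then bounds $|S|$ by analysing how $S$ distributes over the $2^n$ cosets of the subgroup $2\Z_4^n\cong\mathbb{F}_2^n$, ruling out large configurations via the linear-algebraic restrictions on $A$ and on the affine spans $\langle v_1,\dots,v_k\rangle_{aff}$.

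The main obstacle will be the upper bound $r_3(\Z_4^5)\le 124$: the dimension is large enough that a naive search over subsets of $\Z_4^5$ (a space of $1024$ points) is infeasible by hand, so everything hinges on organising the case analysis through the $\mathbb{F}_2^5$ reformulation and then doing a careful, finite but intricate, division into cases according to the structure of $A\subseteq\mathbb{F}_2^5$ and the intersection pattern of $S$ with low-dimensional affine subspaces. I expect this is exactly why the paper devotes two separate sections (Sections~\ref{sec3ap1} and \ref{sec3ap2}) to the five-dimensional case. The $n=4$ upper bound $r_3(\Z_4^4)\le 42$ is analogous but genuinely smaller, so it serves as a warm-up; the $n\le 3$ bounds are routine once the reformulation is in place. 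Throughout, the two tools doing the real work are (i) the digit/reformulation dictionary between $3$-AP-freeness in $\Z_4^n$ and sumset/affine-span conditions in $\mathbb{F}_2^n$, and (ii) a counting argument over the $2^n$ cosets of $2\Z_4^n$ that converts those local conditions into the global size bounds $16,42,124$.
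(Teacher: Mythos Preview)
Your high-level strategy matches the paper's: reformulate over $\mathbb{F}_2^n$ via the coset decomposition $\Z_4^n \to \mathbb{F}_2^n$, obtain the lower bounds from the explicit construction of Section~\ref{sec:const} (which indeed gives exactly $6,16,42,124$ for $n=2,3,4,5$), and prove the upper bounds by case analysis on the system $\{A(x)\}_{x\in\mathbb{F}_2^n}$. One minor correction: Section~\ref{sec3ap1} handles $n\le 4$ and Section~\ref{sec3ap2} handles $n=5$ alone; they are not both devoted to the five-dimensional case.

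However, your proposal is missing the key technical idea that makes the upper-bound analysis feasible. The paper does not simply ``divide into cases according to the structure of $A\subseteq\mathbb{F}_2^n$'': it first proves a \emph{reduction to subspaces}, showing that in an extremal configuration one may assume every nonempty $A(x)$ is a linear subspace (or, in a few exceptional cases, a set of $5$ or $6$ affinely independent points that are then ruled out separately). The mechanism is that if $A(x)+A(x)\supseteq V$ for a subspace $V$ with $|V|\ge |A(x)|$, one can replace $A(x)$ by $V$ without violating property $(*)$. Once all $A(x)$ are subspaces, the combinatorics becomes tractable: a $d$-subspace $A(x)$ forces $2^d-1$ specific sets $A(y)$ to be empty, and one counts by balancing the number of large subspaces against the number of forced empty sets (e.g.\ via the ``empty triple'' injections $k\le\binom{l}{3}$). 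Without this reduction, the space of possible $A(x)$ is far too large for a hand case analysis even at $n=4$; your phrase ``careful, finite but intricate, division into cases'' does not by itself indicate how to cut the search down to something manageable. For $n=5$ the paper needs, on top of the subspace reduction, a further ten-step argument classifying affine $3$-subspaces by their intersection pattern with translates of a fixed $2$-subspace --- none of which is foreshadowed in your sketch.
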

We give quite uniform proofs for all these dimensions.
The value $r_3(\Z_4^3)=16$ was stated before by Sanders
\cite{Sanders:2009} (and was indeed a computer calculation by O.~Sisask), 
and the value $r_3(\Z_4^4)=42$ was determined in a Masters' Thesis by
Lawrence Newcombe \cite{Newcombe:2008} (a student of the first author). From that proof it was already 
apparent that $r_3(\Z_4^n)$ could be much smaller than $4^n$, due to a $\Z_2^n$-substructure of $\Z_4^n$, but proceeding to 
higher dimension might have been very tedious.

Next we give a lower bound on $r_3(\mathbb{Z}_4^n)$. In the construction we use binary codes with certain minimum distances. Let $C(m,d)$ denote the largest possible size of a (possibly non-linear) code in $\mathbb{F}_2^m$ with minimum distance at least $d$.
(In the literature, this is often denoted with $A(m,d)$.)
Note that $C(m,1)=2^m$ (all vectors can be taken) and $C(m,2)=2^{m-1}$ (all codewords can be taken with even Hamming-weight).
Here are links to tables of exact values of maximal codes or bounds:
\url{https://www.win.tue.nl/~aeb/codes/binary-1.html}
and
\url{http://www.codetables.de/}

\begin{theorem}\label{const} For $n>1$ we have $r_3(\mathbb{Z}_4^n)\geq \max\limits_{0\leq t\leq n}\sum\limits_{i=t+1}^n \binom{n}{i}C(i,i-t)$.
\end{theorem}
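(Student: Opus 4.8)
The plan is to construct an explicit progression-free set in $\Z_4^n$ by exploiting the map $\Z_4 \to \Z_2$ reducing mod $2$, applied coordinatewise, $\pi\colon \Z_4^n \to \F_2^n$. The key structural observation is that in $\Z_4$, if $x_1, x_2, x_3$ form a proper $3$-term progression, i.e. $x_1 + x_3 = 2x_2$ with the three elements distinct, then reducing mod $2$ gives $\pi(x_1) = \pi(x_3)$; so a proper progression either lives entirely inside a single fiber of $\pi$, or it has $\pi(x_1)=\pi(x_3)\ne \pi(x_2)$. Inside a fiber $\pi^{-1}(v)$ the condition $2x_2 = x_1+x_3$ with $x_i \equiv v$ becomes, after writing $x_i = v + 2y_i$ with $y_i \in \F_2^n$, simply $0 = 2(y_1+y_3)$ which is automatic — so within a fiber \emph{every} triple with $x_1+x_3=2x_2$ is a progression, and to avoid proper ones we must take an antichain-type restriction; but the cleaner route is to forbid the fiber-crossing case and allow at most a controlled set inside each relevant fiber.

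Concretely, I would fix a parameter $t$ with $0\le t\le n$ and a coordinate subset viewpoint: index fibers by their support size. For each $v\in\F_2^n$ with Hamming weight $|v| = i > t$, I plan to place in the fiber over $v$ a set $S_v \subseteq \pi^{-1}(v)$ obtained from a binary code $C_i \subseteq \F_2^i$ (living on the support of $v$) of minimum distance $\ge i-t$ and size $C(i,i-t)$, embedding its codewords as the ``$2y$'' parts supported on $\mathrm{supp}(v)$. Taking $S = \bigcup_{i>t}\bigcup_{|v|=i} S_v$ gives $|S| = \sum_{i=t+1}^n \binom{n}{i} C(i,i-t)$, and optimizing over $t$ yields the stated bound. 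It remains to verify $S$ is progression-free.

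For the verification I would split into the two cases above. In the fiber-crossing case, a proper progression $x_1,x_2,x_3$ has $\pi(x_1)=\pi(x_3)=v$ and $\pi(x_2)=w\ne v$; then $x_1+x_3=2x_2$ forces, looking mod $4$ on coordinates where $v$ and $w$ differ, a contradiction of the type that the ``carry'' structure cannot be matched — here one uses that $x_1+x_3 \equiv 2v \pmod 4$ coordinatewise while $2x_2 \equiv 2w$, and $2v\ne 2w$ in $\Z_4^n$ since $v\ne w$ — so this case simply cannot occur at all. In the within-a-fiber case, say all three in $\pi^{-1}(v)$ with $|v|=i>t$: writing $x_j = v + 2c_j$ with $c_j\in\F_2^i$ codewords of $C_i$, distinctness of the $x_j$ means the $c_j$ are distinct, and the progression condition $x_1-2x_2+x_3=0$ becomes $2(c_1+c_3)=0$, i.e. $c_1=c_3$ in $\F_2^i$, contradicting distinctness. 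Hence no proper $3$-AP lies in a single fiber either. Wait — I should double check the role of $t$ and the minimum distance: the distance condition is what rules out progressions whose three points lie in \emph{different} fibers but project to nearby vectors, or progressions that are ``improper only because of low weight'' — more carefully, the parameter $t$ controls fibers of weight $\le t$ which are simply omitted, and the minimum-distance requirement $d\ge i-t$ is exactly what guarantees that for codewords $c,c'$ of $C_i$, the vectors $v+2c$ and $v+2c'$ (of weight between $i$ minus something and $i$) cannot serve as the two ``equal'' endpoints of a cross-fiber progression together with a third low-weight point. So the honest structure of the proof is: (i) show a proper $3$-AP in $\Z_4^n$ has its mod-$2$ projection of the form $(v,v,w)$; (ii) if $w\ne v$, derive from $x_1+x_3=2x_2$ together with the weight/support constraints imposed by the code and the threshold $t$ that no such configuration exists in $S$; (iii) if $w=v$, use the argument above.

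The main obstacle I anticipate is case (ii): correctly pinning down why the minimum-distance hypothesis $d \ge i-t$ is the precise combinatorial condition that defeats cross-fiber progressions, since this is where the interplay between the $\Z_4$ carry arithmetic, the supports of $v$, and the code distance all have to align. Everything else — the same-fiber case, the cardinality count, the optimization over $t$ — should be routine once the indexing convention (codes supported on $\mathrm{supp}(v)$, threshold weight $t$) is fixed. I would therefore devote most of the write-up to carefully setting up that indexing and then checking (ii) coordinate by coordinate.
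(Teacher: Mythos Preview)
Your construction is the same as the paper's (the paper routes it through the subset reformulation of Lemma~\ref{3apeq}, but the set $S$ you build is identical). The gap is in your verification of the cross-fiber case, and you sensed it yourself.

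The claim ``$x_1+x_3\equiv 2v\pmod 4$ coordinatewise, while $2x_2\equiv 2w$, so this case simply cannot occur'' is false. Write $x_1=v+2c_1$ and $x_3=v+2c_3$ with $c_1,c_3\in\{0,1\}^n$; then in $\Z_4^n$ one has $x_1+x_3=2v+2c_1+2c_3=2(v\oplus c_1\oplus c_3)$, not $2v$. The progression condition $x_1+x_3=2x_2=2w$ is therefore equivalent to $w=v\oplus c_1\oplus c_3$ in $\F_2^n$, which certainly can be satisfied with $w\neq v$ whenever $c_1\neq c_3$. Cross-fiber progressions are not structurally impossible; the entire purpose of the threshold $t$ and the minimum-distance hypothesis is to force the midpoint into an \emph{empty} fiber.

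Here is the one-line computation you were missing. Since $c_1,c_3$ are distinct codewords of a code supported on $\mathrm{supp}(v)$ with minimum distance at least $i-t$, the vector $c_1\oplus c_3$ is supported inside $\mathrm{supp}(v)$ and has Hamming weight at least $i-t$. Hence $w=v\oplus(c_1\oplus c_3)$ is obtained from $v$ by flipping at least $i-t$ of its ones to zeros, giving $|w|\le i-(i-t)=t$. By construction the fiber over $w$ is empty, so $x_2\notin S$. That is the whole argument; your within-fiber case (iii) is correct (indeed vacuous, since $c_1=c_3$ forces $x_1=x_3$), and with this replacement for case (ii) the proof is complete.
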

As a consequence of this result one can prove a quite good lower bound.
\begin{corollary}{\label{result1}}
\[ r_3(\Z_4^n) \gg \frac{3^n}{\sqrt{n}}\]
which implies that there exists a progression-free set 
$S\subset \Z_4^n$ with
\[ \vert S \vert \gg   4^{0.7924n}.\]
\end{corollary}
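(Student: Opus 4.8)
The plan is to derive Corollary~\ref{result1} from the lower bound $r_3(\mathbb{Z}_4^n)\geq \max_{0\leq t\leq n}\sum_{i=t+1}^n \binom{n}{i}C(i,i-t)$ of Theorem~\ref{const} by making a single good choice of the parameter $t$. The most transparent choice is $t=0$, since then the code distance is $i$, and $C(i,i)=2$ (a code with minimum distance equal to the length can contain at most the all-zero and all-one word, and both can be taken). This gives the clean bound $r_3(\mathbb{Z}_4^n)\geq 2\sum_{i=1}^n\binom{n}{i}=2(2^n-1)$, which is far too weak. The point is that one should instead take $t$ to be a small \emph{constant fraction} of $n$ (or, more precisely, $i-t$ a constant), so that $C(i,i-t)$ can be taken as large as a fixed power of $2$ while $\binom{n}{i}$ is still exponentially large. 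Concretely, fix the distance $d:=i-t$ to be a constant; then $C(i,d)\geq C(d+1,d)=2$ is again not enough, so the right regime is to keep $i-t$ \emph{bounded} but let $C(i,i-t)$ be governed by a good code family. The cleanest route: use the Singleton-type/Plotkin regime is not needed — instead I would exploit that for fixed $d$ and $i\to\infty$, $C(i,d)\gg i^{\lfloor (d-1)/2\rfloor}$ by a greedy/Gilbert–Varshamov bound, but even the crude $C(i,2)=2^{i-1}$ already suffices.

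So the heart of the argument is the choice $t=1$ together with $C(i,i-1)$. Here $C(i,i-1)$ is the maximal size of a binary code of length $i$ with minimum distance $i-1$; by the Plotkin bound this is $O(i)$, and in fact $C(i,i-1)\geq i$ is achievable for suitable $i$ (e.g.\ from a Hadamard-type or simplex construction), but again this only yields $\sum_i\binom{n}{i}\cdot i = n2^{n-1}$, still not $3^n$. The genuinely correct choice is to balance: take $t=\alpha n$ for a constant $\alpha\in(0,1)$ and $i$ ranging near $(1-\beta)n$, so that $i-t\approx(\alpha-\beta+\cdots)n$ is a constant fraction of the block length, and invoke the Gilbert–Varshamov bound $C(i,\delta i)\geq 2^{i(1-H(\delta))}$, where $H$ is the binary entropy. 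Then the summand near its peak behaves like $\binom{n}{i}2^{i(1-H(\delta))}$, and one optimizes the exponent $H(i/n)+ (i/n)(1-H(\delta))$ where $\delta=(i-t)/i=1-t/i$ over the free parameters $i/n$ and $t/i$. I would carry out this optimization and check that the maximum of the exponent (base $2$) equals $\log_2 3$, i.e.\ that the optimum is exactly $3^n$ up to subexponential factors; the $1/\sqrt{n}$ loss then comes from the usual Stirling estimate $\binom{n}{i}\sim 2^{nH(i/n)}/\sqrt{n}$ at the single dominant term, with no summation gain claimed.

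In more detail, the steps are: (1) substitute into Theorem~\ref{const} the near-optimal parameters and write the dominant summand as $\binom{n}{i}C(i,i-t)$; (2) lower-bound $C(i,i-t)$ by Gilbert–Varshamov, $C(i,i-t)\geq 2^{i}\big/\sum_{j<i-t}\binom{i}{j}\geq 2^{i(1-H(1-t/i))+o(n)}$; (3) apply Stirling to get $\binom{n}{i}\geq c\,2^{nH(i/n)}/\sqrt{n}$; (4) maximize the exponent $f(x,y)=H(x)+x\bigl(1-H(1-y)\bigr)$ over $x=i/n\in(0,1)$ and $y=t/i\in(0,1)$, subject to $t\le n$ i.e.\ $xy\le 1$, and verify $\max f = \log_2 3$, attained at an interior point (I expect $x$ close to $1$ and $1-y$ small, matching the heuristic that one wants long codes of small relative distance); (5) conclude $r_3(\mathbb{Z}_4^n)\geq c\,3^{n}/\sqrt{n}$, and finally translate to the base-$4$ exponent via $\log_4 3 = 0.7924\ldots$, absorbing the $1/\sqrt n$ into the $\ll$.

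The main obstacle is Step~(4): identifying the optimal $(x,y)$ and confirming the maximum is \emph{exactly} $\log_2 3$ rather than merely close to it. This requires differentiating $f$, solving the stationarity equations (which will involve $\log\frac{1-x}{x}$ and the derivative of $H$ at $1-y$), and substituting back; I anticipate the algebra cleans up precisely because $3 = 1 + 2$ reflects the structure $\sum_i \binom{n}{i}2^{i}=3^n$ with the code size playing the role of the ``$2^i$'' factor only in an averaged sense. A possible shortcut avoiding the full optimization: use the identity $\sum_{i=0}^n\binom{n}{i}C(i,i-t)$ and the fact that a product construction with a $2$-element code in each coordinate is what yields $3^n$ exactly when $t$ is taken so that $C(i,i-t)$ matches $\binom{\text{local}}{\cdot}$ — but I expect the Gilbert–Varshamov optimization to be the honest path, and the bookkeeping of the $\sqrt{n}$ factor to be the only other fiddly point.
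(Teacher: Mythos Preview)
Your proposal circles around the right idea but never lands on it, and your stated expectations about the optimization are incorrect. You even write the key fact --- ``the crude $C(i,2)=2^{i-1}$ already suffices'' --- and then abandon it. The paper's argument from Theorem~\ref{const} is essentially one line: take $t\sim 2n/3$ and keep only the term $i=t+1$. Then $i-t=1$, so $C(i,1)=2^{i}$ exactly (every binary word is a distance-$1$ code), and the summand is $\binom{n}{t+1}2^{t+1}$ with $t+1\approx 2n/3$; by Stirling this is $\gg 3^n/\sqrt{n}$, being the peak term of $\sum_i\binom{n}{i}2^i=(1+2)^n=3^n$. No Gilbert--Varshamov, no two-parameter optimization.

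Your GV route has a concrete gap. The function $f(x,y)=H(x)+x\bigl(1-H(1-y)\bigr)$ has no interior maximum: on the range $y\in[\tfrac12,1)$ where your entropy estimate for the Hamming ball is valid, $\partial f/\partial y = xH'(1-y)>0$, so the supremum is on the boundary $y\to 1$. There $f(x,1^-)=H(x)+x$, maximized at $x=2/3$ (not ``$x$ close to $1$'' as you guessed) with value $\log_2 3$. But $y=1$ means $t=i$, which is infeasible; for any fixed relative distance $\delta=1-y>0$ the GV bound yields only $(3-\varepsilon_\delta)^n$, never $3^n/\sqrt{n}$. To reach the corollary you are forced back to $i-t$ a bounded integer and the exact values $C(i,1)=2^i$ or $C(i,2)=2^{i-1}$ --- at which point GV is redundant and you have recovered the paper's direct choice $t\approx 2n/3$, $i=t+1$.
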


As this Corollary is the first nontrivial case of the lower bound
constructions and is suitable for discussing various methods we give two proofs of it.
The first one is a direct application of Theorem \ref{const}.


\begin{proof}[Proof of Corollary (Proof 1)]
Calculations show that the optimal choice for $t$ in Theorem~\ref{const} satisfies $t\sim 2n/3$. In particular, 
for $2\leq n\leq 10$ the optimal choice is $t=\lceil (2n-5)/3\rceil$.
Note that the sum of only the first two terms in the lower bound $\sum\limits_{i=t+1}^n \binom{n}{i}C(i,i-t)$, with an optimal value of $t$, is
$$\binom{n}{t+1}2^{t+1}+\binom{n}{t+2}2^{t+1}\sim 1.5 \binom{n}{2n/3}\sim \frac{9}{4\sqrt{\pi}}\cdot\frac{3^n}{\sqrt{n}}.$$
The total sum is not much larger as it is bounded above by $\frac{3}{\sqrt{\pi}}\cdot\frac{3^n}{\sqrt{n}}$
(see also \cite{Chvatal:1972}).
\end{proof}

Here we give a brief sketch of the second proof, full details and more motivation is in
 section \ref{secproof}.
We consider all elements in $\Z_4^n$ with exactly $\lfloor n/3\rfloor $ entries being $1$, and the remaining entries being $0$ or $2$. This gives $\binom{n}{\lfloor n/3\rfloor} 2^{n-\lfloor n/3\rfloor }\gg \frac{3^n}{\sqrt{n}}$ elements. The reason why this set is progression-free is that there is a unique middle-point of a putative progression and that the boundary points cannot use their 1-entries in a nontrivial way, such that
a progression pattern 012 cannot occur. But then there cannot be progressions of three {\emph{distinct}} points, a contradiction.

Finally, we found a third proof,
based on weighted Sperner capacity of the 2-vertex graph with 
one directed edge, and vertex weights 1 and 2, but decided not 
to include it.

\begin{corollary}\label{gen_lower}
$$2\leq r_3(\mathbb{Z}_4^1),\quad 6\leq r_3(\mathbb{Z}_4^2),\quad 16\leq r_3(\mathbb{Z}_4^3),\quad 42\leq r_3(\mathbb{Z}_4^4), \quad 124\leq r_3(\mathbb{Z}_4^5),$$
$$344\leq r_3(\mathbb{Z}_4^6),\quad 960\leq r_3(\mathbb{Z}_4^7),\quad 2832\leq r_3(\mathbb{Z}_4^8),\quad 7880\leq r_3(\mathbb{Z}_4^9), \quad 22232\leq r_3(\mathbb{Z}_4^{10}).$$
\end{corollary}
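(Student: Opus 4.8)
The plan is to derive Corollary~\ref{gen_lower} by direct substitution into the lower bound of Theorem~\ref{const}, namely
\[
r_3(\mathbb{Z}_4^n)\geq \max_{0\leq t\leq n}\sum_{i=t+1}^n \binom{n}{i}C(i,i-t),
\]
choosing for each $n\in\{1,\dots,10\}$ the value of $t$ that maximizes the right-hand side. Since Proof~1 of Corollary~\ref{result1} indicates that the optimum sits at $t=\lceil(2n-5)/3\rceil$ for $2\leq n\leq 10$, I would first tabulate that $t$ for each $n$ and then evaluate the sum term by term. The only external ingredient needed is a table of the values $C(i,d)$ for small $i$ and $d$; these are standard (the two references given in the excerpt, \url{https://www.win.tue.nl/~aeb/codes/binary-1.html} and \url{http://www.codetables.de/}, suffice), and the special cases $C(i,1)=2^i$, $C(i,2)=2^{i-1}$ recalled just before Theorem~\ref{const} already cover the two dominant terms of each sum.

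Concretely, I would carry out the following steps. First, handle $n=1$ separately: the sum is empty unless $t=0$, and for $t=0$ it is $\binom{1}{1}C(1,1)=2$, giving $r_3(\mathbb{Z}_4^1)\geq 2$. Second, for each $n$ from $2$ to $10$, set $t=\lceil(2n-5)/3\rceil$ and write out $\sum_{i=t+1}^n\binom{n}{i}C(i,i-t)$ explicitly; the leading terms use $i=t+1$ (distance $1$, so $C=2^{t+1}$) and $i=t+2$ (distance $2$, so $C=2^{t+1}$), and the remaining finitely many terms require looking up $C(i,i-t)$ for $i-t\geq 3$. Third, add these up and check that the totals match the claimed numbers $2,6,16,42,124,344,960,2832,7880,22232$. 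For completeness one should also verify that no other choice of $t$ beats this one for these particular $n$ — this is a finite check, since $0\le t\le n\le 10$, and can be relegated to a remark or a short table.

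As a sanity check I would cross-reference the first five values against Theorem~\ref{3APthm}, where $r_3(\mathbb{Z}_4^n)$ is determined exactly for $n\le 5$: the lower bounds $2,6,16,42,124$ coincide with the exact values there, so Theorem~\ref{const} is tight in this range and the computation is self-consistent. This also means the inequalities for $n\le 5$ in Corollary~\ref{gen_lower} are in fact equalities, though the corollary only claims the lower bound.

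The main obstacle is not conceptual but bookkeeping: one must be careful to use the correct values $C(i,i-t)$ for the middle-range indices $i$ (where $i-t$ is $3,4,5,\dots$), since an off-by-one in a minimum distance or a transcription error from the code tables would propagate into the final sum. For $n=10$, for instance, $t=5$ and the sum runs over $i=6,\dots,10$ with distances $1,2,3,4,5$, requiring $C(6,1)=64$, $C(7,2)=64$, $C(8,3)=20$, $C(9,4)=20$, $C(10,5)=12$; getting each of these right (and multiplying by the correct binomial coefficient) is the delicate part. Everything else is routine arithmetic, and once the table of $C$-values is fixed the corollary follows immediately from Theorem~\ref{const}.
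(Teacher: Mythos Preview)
Your approach is essentially identical to the paper's: the corollary is simply Theorem~\ref{const} evaluated at the optimal $t$, and the paper illustrates this with the two worked cases $n=5$ (with $t=2$) and $n=8$ (with $t=4$), leaving the remaining values implicit. Your choice $t=\lceil(2n-5)/3\rceil$ and your table of $C$-values (in particular $C(8,3)=20$, $C(9,4)=20$, $C(10,5)=12$) are correct. One cosmetic point: Theorem~\ref{const} is stated for $n>1$, so for $n=1$ you should simply note that any two elements of $\mathbb{Z}_4$ are trivially $3$AP-free rather than invoking the theorem.
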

Let us explain this with two examples:
when $n=5$, choose $t=2$. Then 
\[\begin{array}{rcl}
r_3(\Z_4^5)& \geq &\binom{5}{3}C(3,1)+\binom{5}{4}C(4,2)+\binom{5}{5}C(5,3)\\
&=&10 \cdot 8+5 \cdot 4+1 \cdot 4=80+40+4=124,
\end{array}\]
which is best possible by Theorem \ref{3APthm}.
When $n=8$, choose $t=4$.
\[\begin{array}{rcl}
r_3(\Z_4^8) &\geq &\binom{8}{5}C(5,1)+\binom{8}{6}C(6,2)+\binom{8}{7}C(7,3)
+ \binom{8}{8}C(8,4)\\
&=&56 \cdot 32+28\cdot 32 +8 \cdot 16+ 1 \cdot 16=2832. 
\end{array}\]
\begin{theorem}{\label{thm:r_4(Z_4^n}}
For sets without arithmetic progression of length $4$ 
we have the following results:
\[r_4(\Z_4^1)=3, r_4(\Z_4^2)=10, r_4(\Z_4^3)=36, r_4(\Z_4^4)=128.\]
\end{theorem}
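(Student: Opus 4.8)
The plan is to prove the lower bounds by explicit constructions (generalizing the $k=3$ strategy) and the matching upper bounds by the reformulation into $\mathbb{F}_2^n$-language announced in Section~\ref{sec:reformulation}, combined with a finite but carefully organized case analysis in dimensions $n\le 4$. For the lower bounds, the idea is that a proper $4$-term arithmetic progression $x_1,x_2,x_3,x_4$ in $\Z_4^n$ satisfies the linked linear conditions $x_i-2x_{i+1}+x_{i+2}=0$, so in each coordinate the pattern $(x_1,x_2,x_3,x_4)$ must be an arithmetic progression in $\Z_4$; the key point is that in $\Z_4$ a proper $4$-AP forces a coordinate to cycle through all of $0,1,2,3$ (up to translation/reflection). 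So one builds a set $S$ by fixing, say, a prescribed number of ``active'' coordinates carrying values in a $2$-element subset like $\{0,2\}$ (or $\{1,3\}$) together with a layer structure on the remaining coordinates, exactly so that no coordinate can realize the full cycle $0,1,2,3$ while keeping all four points distinct. For $n=1$ this gives $\{0,1,2\}$ or $\{1,2,3\}$ of size $3$; for small $n$ one checks these constructions meet $10,36,128$, and in the top case $128=2\cdot 4^3$ the extremal set should be essentially a coset-type object (two ``parallel'' copies of a $3$-dimensional configuration), which is where the reformulation of Section~\ref{sec:reformulation} gives the cleanest description.

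For the upper bounds, I would first invoke the reformulation (Section~\ref{sec:reformulation}) that translates the problem of bounding $r_4(\Z_4^n)$ into a statement about subsets $A\subseteq \mathbb{F}_2^n$ with forbidden configurations expressed via affine subspaces and the (restricted) sumset $A\hat{+}A$; the $\Z_2^n$-substructure of $\Z_4^n$ noted in the discussion after Theorem~\ref{3APthm} is exactly the mechanism. Writing $\Z_4^n$ over its ``mod $2$'' quotient $\mathbb{F}_2^n$, a $4$-AP projects to a $4$-AP mod $2$, which (since $4\equiv 0$) collapses: $x_1\equiv x_3$ and $x_2\equiv x_4$ mod $2$, while $x_1+x_3\equiv 2x_2$ etc.\ impose congruences on the ``mod $2$'' upper digits. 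So $S$ decomposes along fibers of $S\to \mathbb{F}_2^n$, and the no-$4$-AP condition becomes a combinatorial condition on which fibers are occupied and how large they are — precisely an extremal problem over $\mathbb{F}_2^n$ about avoiding certain affine lines/planes in the support together with a density/parity condition on fiber sizes. Then for each $n\le 4$ one runs the induction/case analysis: project onto a coordinate hyperplane, split $S$ into its two slices $S_0,S_1$ (fixing the last $\Z_4$-coordinate's parity), bound each slice by the $(n-1)$-dimensional result, and control the loss from progressions that cross between slices using the linear link $x_i-2x_{i+1}+x_{i+2}=0$.

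The main obstacle, as with Theorem~\ref{3APthm}, will be the top-dimensional case $r_4(\Z_4^4)=128$: the slicing argument alone gives roughly $4\cdot r_4(\Z_4^3)=144>128$, so one must extract the extra savings from the crossing progressions, and this is where a genuine case distinction on the structure of the two slices (and on which affine subspaces of $\mathbb{F}_2^3$ appear in their supports) is unavoidable. I expect the argument to proceed by assuming $|S|\ge 129$, deducing that both slices are near-extremal and hence structurally rigid (close to the conjectured extremal configuration from the lower-bound construction), and then exhibiting a proper $4$-AP using one point from each slice — the rigidity is what makes the cross-progression findable. The lower-bound constructions should be routine to verify once written down; the delicate part is entirely the sharp upper bound in dimension $4$, and secondarily bookkeeping the affine-subspace case analysis so it stays finite and checkable.
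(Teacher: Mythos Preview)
Your plan has genuine gaps on both sides.

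\textbf{Lower bounds.} The principle you invoke is correct --- a proper $4$-AP in $\Z_4^n$ has common difference $d$ with $2d\ne 0$, so some coordinate of $d$ is odd and that coordinate runs through all of $\Z_4$ --- but it does not by itself produce sets of size $10,36,128$. A set in which each coordinate misses at least one value has size at most $3^n=9,27,81$ for $n=2,3,4$, and your description (``active coordinates in $\{0,2\}$, layer structure on the rest'') is not concrete enough to see how it beats this. The paper's constructions live entirely in the $(**)$ reformulation of Section~\ref{sec:reformulation}: for $n\le 3$ one takes $A(0)=\mathbb{F}_2^n$ and $A(x)=(\varphi(x))^\perp$ for $x\ne 0$, where $\varphi$ is a bijection of $\mathbb{F}_2^n\setminus\{0\}$ with $x\varphi(x)=1$ and $x\varphi(y)+y\varphi(x)=1$ for all $x\ne y$ (explicit such $\varphi$ are written down); this gives exactly $2^n+(2^n-1)2^{n-1}=3,10,36$. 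For $n=4$ the $n=3$ map is extended so that all sixteen $A(x)$ are $3$-subspaces, giving $16\cdot 8=128$.

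\textbf{Upper bounds.} Slicing into coordinate hyperplanes is already too lossy at $n=2$ (it gives $4\cdot 3=12$, not $10$), so a ``cross-progression'' correction would be needed at every inductive step, and ``near-extremal hence rigid'' is not an argument. The paper's mechanism is different and much tighter. In the $(**)$ picture, the pigeonhole observation $|A(x)|>2^{n-1}\Rightarrow A(x)+A(x)=\mathbb{F}_2^n$ shows that at most one $A(x)$ can be that large, whence $\sum_x|A(x)|\le 2^n+(2^n-1)2^{n-1}$; this equals $3,10,36$ on the nose and settles $n\le 3$ in one line. For $n=4$ it gives only $136$, and the gap to $128$ is closed as follows: after the reduction (borrowed from the $3$-AP analysis) to the case where the nonempty $A(x)$ are subspaces or five affinely independent points, one may assume $A(0)=\mathbb{F}_2^4$ and at least $14$ of the remaining $A(x)$ are $3$-subspaces $(\varphi(x))^\perp$; from $(**)$ one deduces $x\varphi(x)=1$, that $\varphi$ is injective, and that for distinct $x,y$ not both of $x\varphi(y),y\varphi(x)$ can equal $1$. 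A double count then forces $x\varphi(y)+y\varphi(x)=1$ for essentially all pairs, so the vectors $u(x)=(1,x,\varphi(x))$ and $v(x)=(1,\varphi(x),x)$ in $\mathbb{F}_2^{9}$ satisfy $u(x)v(y)=\delta_{xy}$, a biorthogonal system bounding the number of such $x$ by $9$ --- a contradiction. This rank/biorthogonality argument is the idea your plan is missing.
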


It is well known that results of this type can be lifted to 
higher dimensions and yield asymptotic results 
by a simple product construction, compare also Proposition 3.5
\cite{Edel-Elsholtz:2007} in the similar setting of zero-sum free sets.
\begin{lemma}{\label{product-construction}}
\begin{itemize}
\item[a)] Let $S_1 \subset \Z_m^{n_1}$ and $S_2 \subset \Z_m^{n_2}$
be $k$-progression-free sets, then 
$S_1 \times S_2 \subset \Z_m^{n_1+n_2}$ is also $k$-progression-free.
\[r_k(\Z_m^{n_1+n_2})\geq r_k(\Z_m^{n_1})\, r_k(\Z_m^{n_2}).\]

\item[b)] A repeated application of part a) gives:
\[r_k(\Z_m^{nt})\geq \left(r_k(\Z_m^n)\right)^t.\]
\end{itemize}
\end{lemma}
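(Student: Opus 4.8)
The plan is to prove part (a) by the standard coordinate--projection argument and then to obtain part (b) by iterating (a). Write $\Z_m^{n_1+n_2}=\Z_m^{n_1}\times\Z_m^{n_2}$ and let $\pi_1,\pi_2$ denote the two coordinate projections. Suppose for contradiction that $S_1\times S_2$ contains a proper $k$-term arithmetic progression $p_0,p_1,\dots,p_{k-1}$ with common difference $d$, so that $p_i=p_0+id$ and the $p_i$ are pairwise distinct; in particular $d\ne 0$. Applying $\pi_1$ gives $\pi_1(p_i)=\pi_1(p_0)+i\,\pi_1(d)$, an arithmetic progression of length $k$ lying in $S_1$ with common difference $\pi_1(d)$, and similarly $\pi_2(p_0),\dots,\pi_2(p_{k-1})$ lies in $S_2$ with common difference $\pi_2(d)$. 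The goal is to show that at least one of these two projected progressions is again \emph{proper}: that contradicts the hypothesis that $S_1$ and $S_2$ are $k$-progression-free, and then, taking $S_1,S_2$ of maximal size, $r_k(\Z_m^{n_1+n_2})\ge|S_1\times S_2|=|S_1|\,|S_2|=r_k(\Z_m^{n_1})\,r_k(\Z_m^{n_2})$.

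So the crux is the claim: if $p_0,\dots,p_{k-1}$ are pairwise distinct, then the points $\pi_1(p_0),\dots,\pi_1(p_{k-1})$ are pairwise distinct, or the points $\pi_2(p_0),\dots,\pi_2(p_{k-1})$ are. If $\pi_1(d)=0$ then all the $\pi_1(p_i)$ coincide, so distinctness of the $p_i$ forces the $\pi_2(p_i)$ to be pairwise distinct, and we are done; the case $\pi_2(d)=0$ is symmetric. Otherwise $\pi_1(d)$ and $\pi_2(d)$ are both nonzero; let $a$ and $b$ be their additive orders, each a divisor of $m$. The progression $\pi_1(p_i)$ is proper precisely when $\ell\,\pi_1(d)\ne 0$ for all $\ell\in\{1,\dots,k-1\}$, i.e.\ when $a\ge k$, and likewise the second projection is proper iff $b\ge k$; meanwhile $d$ has additive order $\operatorname{lcm}(a,b)$, and properness of the original progression says exactly $\operatorname{lcm}(a,b)\ge k$. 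When $m$ is a prime power --- which covers $\Z_4$ and $\Z_{125}$ --- the divisors of $m$ form a chain under divisibility, so $\operatorname{lcm}(a,b)=\max(a,b)$, whence $a\ge k$ or $b\ge k$ and one of the projections is proper. (For $k=3$ the conclusion holds for every $m$, since properness of the original progression is just the statement $2d\ne 0$, which forces $2\pi_1(d)\ne 0$ or $2\pi_2(d)\ne 0$.)

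For part (b) apply part (a) with the splitting $nt=n+n(t-1)$ and induct on $t$: granting $r_k(\Z_m^{n(t-1)})\ge\big(r_k(\Z_m^n)\big)^{t-1}$, part (a) gives $r_k(\Z_m^{nt})\ge r_k(\Z_m^n)\cdot r_k(\Z_m^{n(t-1)})\ge\big(r_k(\Z_m^n)\big)^{t}$, the case $t=1$ being trivial. The \textbf{main obstacle}, such as it is, lies in the distinctness bookkeeping of the middle paragraph: a proper progression in the product need not project to a proper progression in either factor, so one must check it survives in at least one of them, which for prime-power modulus (and for every $m$ when $k=3$) follows from the comparison of additive orders above. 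The rest is routine.
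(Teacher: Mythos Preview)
The paper does not supply a proof of this lemma; it is asserted as well known, with a pointer to an analogous statement for zero-sum-free sets. Your coordinate-projection argument is the natural one, and your handling of part (b) by induction is fine.

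Your caution about the ``distinctness bookkeeping'' is more than warranted: the first assertion of part (a) is in fact \emph{false} for general $m$ once $k\ge 4$. Take $m=6$, $k=4$, $S_1=\{0,1,3,4\}$ and $S_2=\{0,1,2,4\}$ in $\Z_6$. Each is free of proper $4$-term progressions, since the only proper $4$-APs in $\Z_6$ have common difference $\pm1$ and hence consist of four cyclically consecutive residues, and neither set contains such a run. But $(0,0),(3,2),(0,4),(3,0)$ is a proper $4$-AP in $S_1\times S_2\subset\Z_6^2$ with common difference $(3,2)$: the first coordinates $0,3,0,3$ lie in $S_1$, the second coordinates $0,2,4,0$ lie in $S_2$, and the four points are pairwise distinct. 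So the obstruction you isolate---that $\operatorname{lcm}(a,b)\ge k$ need not force $\max(a,b)\ge k$---is genuine and cannot be repaired in general.

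Your argument therefore establishes part (a) precisely in the regimes you name: $m$ a prime power (where the divisors of $m$ are totally ordered by divisibility, so $\operatorname{lcm}(a,b)=\max(a,b)$) and $k=3$ for arbitrary $m$. These cover every concrete construction in the paper (which works in $\Z_4$, $\Z_8$, $\Z_{11}$, $\Z_{p^s}$); the one place the lemma is invoked in full generality is the superadditivity step in the proof that $\alpha_{k,m}$ exists, and for composite non-prime-power $m$ with $k\ge4$ that step, as written, is not justified by the product construction.
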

Lifting the largest known exact values $r_3(\Z_4^5)=124$ and $r_4(\Z_4^n)$ gives:
\begin{corollary}
\[ r_3(\Z_4^n)\gg 2.622^n,
\quad  r_4(\Z_4^n)\gg 3.363^n.\]
\end{corollary}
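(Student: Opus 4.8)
The plan is to apply Lemma~\ref{product-construction} directly to the largest exact values already established. For the first bound, Theorem~\ref{3APthm} gives $r_3(\Z_4^5)=124$. Writing $n = 5t + s$ with $0 \le s < 5$, part~b) of Lemma~\ref{product-construction} yields $r_3(\Z_4^{5t}) \ge 124^t$, and then part~a) combined with the trivial bound $r_3(\Z_4^s) \ge 1$ (or better, $r_3(\Z_4^s)\ge 2$ for $s\ge 1$) gives $r_3(\Z_4^n) \ge 124^{t} = 124^{(n-s)/5} = \left(124^{1/5}\right)^{n}\cdot 124^{-s/5}$. Since $124^{1/5} = 2.6220\ldots$ and $124^{-s/5}$ is a bounded constant depending only on $s \in \{0,1,2,3,4\}$, this establishes $r_3(\Z_4^n) \gg 2.622^n$, where the implied constant absorbs the finitely many residue cases.

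For the second bound we use $r_4(\Z_4^4) = 128$ from Theorem~\ref{thm:r_4(Z_4^n}. The same argument with period $4$ gives, for $n = 4t + s$ with $0 \le s < 4$, the inequality $r_4(\Z_4^n) \ge 128^{t} = \left(128^{1/4}\right)^{n}\cdot 128^{-s/4}$. Since $128^{1/4} = 2^{7/4} = 3.3635\ldots$, this yields $r_4(\Z_4^n) \gg 3.363^n$, again with the implied constant depending only on $n \bmod 4$.

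There is essentially no obstacle here: the work is entirely in the already-proved exact values (Theorems~\ref{3APthm} and \ref{thm:r_4(Z_4^n}) and the product construction (Lemma~\ref{product-construction}), and what remains is the routine verification of the numerics $124^{1/5} > 2.622$ and $2^{7/4} > 3.363$ together with the bookkeeping for the at most four residue classes of $n$ modulo the period. One minor point worth stating explicitly is that the bound holds for \emph{all} $n$, not merely multiples of $5$ or $4$; this is why we phrase the conclusion with the Vinogradov $\gg$ notation, allowing a multiplicative constant that depends only on $n \bmod 5$ (respectively $n \bmod 4$) and hence is bounded below by a positive absolute constant.
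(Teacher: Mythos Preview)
Your proof is correct and follows exactly the approach the paper intends: it lifts the exact values $r_3(\Z_4^5)=124$ and $r_4(\Z_4^4)=128$ via the product construction of Lemma~\ref{product-construction}, and you have spelled out the routine handling of the residue classes that the paper leaves implicit.
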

The first result is considerably weaker than Corollary {\ref{result1}}, while the second one is the strongest that is currently known.
The product construction  only makes use of ``local'' information from small dimensions.
The ``relative density'' for the high dimensional problem 
is the same as 
for the low dimensional base-example that was lifted.
Lifting for example the bound $r_3(\Z_{4}^{10})\geq 22232$ gives
a better estimate $r_3(\Z_4^n)\gg 2.720\ldots^n$.
For $k=3$ it is better to use the ``global'' information 
from the digits giving the lower bound $\frac{3^n}{\sqrt{n}}$. But for $k=4$ we do not know how to replace the product construction by a better strategy.

In many cases we present constructions much better than the product construction. These make use of
``global'' properties i.e.~making full use of the actual dimension $n$.
With our current understanding this only works when $k<m$. For $k=m$ the product 
construction appears to be the strongest available method, see also Edel \cite{Edel:2004}.

These proofs describe a set explicitly in terms of its coordinate entries, similar to
the constructions by Salem and Spencer \cite{SalemandSpencer:1942}, 
and Behrend \cite{Behrend:1946}.
Salem and Spencer constructed progression-free sets in the integers by 
representing integers in an $m$-ary digit system, $m$ odd,
and using the digits $0 \leq a_i \leq (m-1)/2$ a fixed number of times, namely with frequency $n/d$ for integers of length $n$. Restricting the digits avoids wrapping over modulo $m$. Behrend constructed large progression-free sets in the integers by mapping a high-dimensional sphere, which by convexity is progression-free, to the integers. He also 
represented integers in an $m$-ary system with digits $0 \leq a_i \leq (m-1)/2$, where $m$ is odd, and fixed value $\sum_{i=1}^n a_i^2$.
In the integer case the optimization of the values of $m$ and $n$ shows that Behrend's construction is greatly superior. In our setting we make use of both ideas, and observe that $m,n$ are fixed by the problem, and the method of Behrend, when applicable, is only slightly stronger, but a bit more complicated.


\begin{proposition}{\label{limit}}
Let $k\geq 3$ and $m\geq 3$ be fixed.
The limit 
\[\alpha_{k,m}:= \lim_{n \rightarrow \infty} \left(r_k(\Z_m^n)\right)^{1/n}\]
exists.
\end{proposition}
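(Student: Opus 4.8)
The plan is to use a supermultiplicativity (Fekete) argument. Write $a_n := r_k(\Z_m^n)$. By Lemma~\ref{product-construction}a), for all $n_1, n_2 \geq 1$ we have $a_{n_1+n_2} \geq a_{n_1} a_{n_2}$, i.e.\ the sequence $(a_n)$ is supermultiplicative. Taking logarithms, $b_n := \log a_n$ satisfies $b_{n_1+n_2} \geq b_{n_1} + b_{n_2}$, so $(b_n)$ is superadditive. Fekete's lemma for superadditive sequences states that $\lim_{n\to\infty} b_n/n$ exists in $(-\infty, +\infty]$ and equals $\sup_n b_n/n$. Hence $\lim_{n\to\infty} (r_k(\Z_m^n))^{1/n} = \exp(\lim_n b_n/n)$ exists in $[1,+\infty]$.

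It remains to rule out the value $+\infty$, i.e.\ to show the limit is finite; equivalently, that $b_n/n$ is bounded above, equivalently that $r_k(\Z_m^n) \leq C^n$ for some constant $C = C(k,m)$. This is immediate and extremely crude: $S \subseteq \Z_m^n$ forces $|S| \leq m^n$, so $a_n \leq m^n$, giving $b_n/n \leq \log m$ for every $n$. Therefore $\alpha_{k,m} = \sup_n (r_k(\Z_m^n))^{1/n} \leq m$ is finite and the limit exists. (One also has the trivial lower bound $a_n \geq k$ since $k$ distinct points with no three — hence no $k$ — in progression exist for $n$ large, or even $a_1 \geq \lceil (m+1)/2\rceil \geq 2$ using a one-dimensional construction, so $\alpha_{k,m} \geq 1$ and the limit is a genuine positive real; but finiteness is the only nontrivial point and it is handled by the bound $a_n\le m^n$.)

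There is essentially no obstacle here: the only mild point is invoking the correct (superadditive) version of Fekete's lemma and noting that the hypotheses $k \geq 3$, $m \geq 3$ are used only to guarantee $a_n \geq 1$ (indeed that a nonempty progression-free set exists, so that $\log a_n$ is well-defined and the limit is not $\log 0$). The substance of the statement is entirely carried by the product construction of Lemma~\ref{product-construction}, which we are free to assume.
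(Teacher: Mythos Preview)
Your proof is correct and follows essentially the same approach as the paper: supermultiplicativity from the product construction (Lemma~\ref{product-construction}), Fekete's lemma applied to $\log r_k(\Z_m^n)$, and the trivial bound $r_k(\Z_m^n)\leq m^n$ to ensure finiteness of the limit. The paper additionally records the lower bound $\alpha_{k,m}\geq \lceil (m+1)/2\rceil$ via Theorems~\ref{r_3(Z_m^n)-odd-m} and~\ref{thm:m-even}, but this is not needed for the existence statement itself.
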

 It follows from Theorems  
 {\ref{r_3(Z_m^n)-odd-m}, \ref{thm:m-even}}
  that
$\lceil\frac{m+1}{2}\rceil\leq \alpha_{k,m} \leq m$, and also
$\alpha_{k,m}<m$, when $k=3$.

In view of the above results, and also in view of an upper bound in a relevant case, see Theorem \ref{subspace_thm},
we state the following conjecture:
\begin{conjecture}{\label{conj_3-o(1)}}
\[ r_3(\Z_4^n)=(3-o(1))^n, \text{ i.e. } \alpha_{3,4}=3\]
\end{conjecture}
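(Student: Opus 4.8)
The lower bound half of the conjecture is already available: Corollary~\ref{result1} gives $r_3(\Z_4^n)\gg 3^n/\sqrt n$, so $\alpha_{3,4}\ge 3$, and the conjecture is equivalent to the matching upper bound $r_3(\Z_4^n)\le (3+o(1))^n$, i.e.\ $\alpha_{3,4}\le 3$. The state of the art is the Croot--Lev--Pach bound $r_3(\Z_4^n)\le 4^{\gamma n}=3.61\ldots^n$ with $\gamma\approx 0.926$ \cite{Croot-Lev-Pach}, so any proof must push the exponent all the way from $\gamma$ down to $\log_4 3\approx 0.7925$. I do not see how to do this with the polynomial method as presently applied; the plan below is instead to show that the extremal configuration underlying Corollary~\ref{result1} is essentially forced.

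The first step is to pass to the reformulation of Section~\ref{sec:reformulation}: writing $v=a+2b$ with $a\in\F_2^n$ the mod-$2$ reduction and $b\in\F_2^n$, decompose $S$ into fibres $S_a=\{b:\ a+2b\in S\}$ over the support $T=\{a:\ S_a\ne\emptyset\}$. A putative $3$-progression through $a+2b_1$ and $a+2b_3$ has middle term $(a+b_1+b_3)+2b_2$ with $b_2$ \emph{free}, so $S$ is $3$-AP-free exactly when $(S_a\hat{+}S_a)\cap(T+a)=\emptyset$ for every $a\in T$ with $|S_a|\ge 2$. In the example of Corollary~\ref{result1} the fibre $S_a$ is the coordinate subspace supported off $\operatorname{supp}(a)$ with $|a|\approx n/3$; the profile of a random $v\in S$ is then: coordinate $i$ is odd with probability $1/3$ (and $b_i$ is pinned), and lies in $\{0,2\}$ with probability $2/3$ (and $b_i$ is free), which has Shannon entropy $H(1/3)+2/3=\log_2 3$ per coordinate. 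Thus the target $|S|\le 3^{n+o(n)}$ is exactly an entropy bound $H(v)\le (\log_2 3)n+o(n)$ for $v$ uniform in $S$.

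The core of the argument would then prove this entropy bound. The inequality $|S_a\hat{+}S_a|\ge |S_a|-1$ in $\F_2^n$ (with equality only when $S_a$ is an affine subspace) combined with the $3$-AP condition forces $|T|\le 2^n-|S_a|+1$ for every $a\in T$; hence if many fibres are large they must be near-cosets, after which one is essentially in the regime covered by Theorem~\ref{subspace_thm}, which already yields $|S|\le 3^n$. The plan is to upgrade Theorem~\ref{subspace_thm} to a stability statement and feed it back, via a coordinate-by-coordinate conditioning of $H(v)$ together with a Kneser/Freiman-type input over $\F_2^n$, to control the mixed regime where some fibres are small and far from subspaces.

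The main obstacle is precisely \emph{removing the structural hypothesis} of Theorem~\ref{subspace_thm}. The condition ``$S_a\hat{+}S_a$ avoids $T+a$ for every $a\in T$'' is one global constraint tying together exponentially many overlapping subsets of $\F_2^n$, and the obvious slicewise estimates (e.g.\ $|S|\le |T|(2^n-|T|+1)$) only give $O(4^n)$. To beat $3.61^n$ one genuinely needs to exploit the freedom of the middle coordinate $b_2$ — either inside a slice-rank/polynomial computation tailored to this three-variable relation over $\Z_4$ (whose rank does not obviously drop below $3^n$: the CLP analysis stops at $3.61^n$), or through a ``sumset is large unless structured'' dichotomy that is uniform over all fibres and has exponential-type quantitative control, which is currently out of reach. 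I therefore expect the bottleneck to be exactly the gap between the conditional bound $3^n$ of Theorem~\ref{subspace_thm} and an unconditional one, namely controlling progression-free sets that are genuinely far from the subspace structure.
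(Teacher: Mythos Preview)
This statement is labelled \emph{Conjecture} in the paper and is \emph{not} proved there. The paper offers only supporting evidence: the lower bound $r_3(\Z_4^n)\gg 3^n/\sqrt{n}$ of Corollary~\ref{result1}, and the conditional upper bound $\sum_x|A(x)|\le 3^n$ of Theorem~\ref{subspace_thm} valid when every nonempty fibre $A(x)$ is a linear subspace. The authors explicitly say that this combination is the ``heuristic reason'' for stating the conjecture; they do not claim, and do not give, an unconditional upper bound better than the Croot--Lev--Pach $3.61\ldots^n$.

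Your write-up is therefore not a proof to be compared with the paper's proof, because there is no paper proof. What you have written is an accurate survey of the situation: you recover the paper's fibre reformulation correctly (your $(a,S_a)$ are the paper's $(x,A(x))$ up to relabelling, and your avoidance condition $(S_a\hat{+}S_a)\cap(T+a)=\emptyset$ is exactly property~$(*)$), you correctly locate the target exponent as the entropy value $\log_2 3$, and you correctly identify Theorem~\ref{subspace_thm} as the structured case. Your own final paragraph concedes that the plan does not close: the step ``upgrade Theorem~\ref{subspace_thm} to a stability statement and feed it back'' is precisely the missing ingredient, and neither the paper nor your outline supplies it. In short, you have diagnosed the problem in the same terms the paper does, and arrived at the same impasse; the conjecture remains open.
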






Tao and Vu \cite[exercise 10.1.3]{TaoandVu:2006}
observe that there is a construction in $\Z_m^n$ with
at least $\frac{[m/2]^n}{m^2 n^2}$ points without 3-progression
(based on Behrend's construction).\footnote{It seems they possibly intended the denominator to be $mn^2$ (in our notation).}

Lin and Wolf \cite{Lin-Wolf:2010} proved the following:
If $m$ is a prime and $k\leq m$
\[r_k(\Z_m^n) \geq \left(m^{2(k-1)}+m^{k-1}-1\right)^{\frac{n}{2k}}
\approx m^{\frac{(k-1)n}{k}}.\]
Their proof makes use of a product construction, as explained in Lemma \ref{product-construction}.
They also have some results, when $m$ is a pure prime power, but this refers to finite fields $\F_m$, which are different from $\Z_m$.
In particular, when $m$ is prime and $m^{k-1}$ is large, and $n$ increases, the exponential growth of the lower bound is based on the constant $m^{\frac{k-1}{k}}$, compared to 
$\lfloor \frac{m+2}{2}\rfloor$ here.


We now give our general theorems, which improve the above lower bound and remove the prime condition on $m$:
\begin{theorem}{\label{r_3(Z_m^n)-odd-m}}
Let $m\geq 5$ be odd. There exists some $C_m>0$ such that
\[r_3(\Z_m^n) \geq \frac{C_m}{\sqrt{n}}
\left(\frac{m+1}{2}\right)^n.\]
Moreover,
with $\sigma_m=\sqrt{\frac{1}{2880}\left(m^4+4m^3-14m^2-36m+45\right)}$
the value $C_m=\frac{1}{3\sqrt{3}\, \sigma_m}$ is admissible.
For increasing odd $m$ asymptotically $C_m \sim \frac{8 \sqrt{5}}{\sqrt{3}\, m^2}$ holds.
\end{theorem}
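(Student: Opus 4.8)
The plan is to mimic the Salem--Spencer philosophy adapted to the fixed modulus $m$, exactly as in the second proof of Corollary \ref{result1}, but now choosing the multiset of allowed ``small'' digits optimally rather than using a single non-trivial digit. Since $m$ is odd, write $m = 2\ell+1$ and consider the digits $\{0,1,\dots,\ell\}$; a coordinate using only these digits cannot ``wrap around'' modulo $m$ in a three-term progression, so that the progression condition becomes a genuine condition over $\Z$ coordinate-by-coordinate. The construction is: fix frequencies $f_0,f_1,\dots,f_\ell$ summing to $n$, and let $S$ be the set of all $x\in\Z_m^n$ whose digit $j$ occurs exactly $f_j$ times, for each $j$. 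One checks that if $x,z\in S$ and $x+z\equiv 2y\pmod m$ with $y\in S$, then by counting digits of each value, $y$ must actually be a coordinate-wise average over $\Z$ and hence $x=z$; this is the same rigidity argument sketched after Corollary \ref{result1}, generalized from the digit set $\{0,1,2\}$ to $\{0,1,\dots,\ell\}$. So $S$ is progression-free, and $|S| = \binom{n}{f_0,f_1,\dots,f_\ell}$.

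Next I would optimize the multinomial coefficient. Taking all $f_j = n/(\ell+1)$ gives $|S| = (\ell+1)^n / (\text{poly in } n)$ by Stirling, i.e.\ essentially $\left(\frac{m+1}{2}\right)^n$ up to a $\mathrm{poly}(n)$ factor. To get the clean $1/\sqrt{n}$ saving (rather than $n^{-\ell/2}$) one instead views $S$ through a random-variable lens: let $D$ be the digit value at a uniformly random coordinate of a uniformly random element; the relevant growth rate is governed by the entropy, and the $\sqrt{n}$ (as opposed to $n^{\ell/2}$) comes from restricting only the \emph{sum} of the digits, or equivalently from a local central limit theorem applied to the one remaining free linear statistic once we've fixed the digit frequencies to their balanced values and are counting lattice points on a single hyperplane. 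Concretely: take $S$ to be all $x$ with digits in $\{0,\dots,\ell\}$ and with $\sum_i x_i$ equal to its most probable value $\lfloor \ell n/2\rfloor$; a local CLT for the sum of $n$ i.i.d.\ uniform variables on $\{0,\dots,\ell\}$ shows this captures a $\Theta(1/\sqrt{n})$ fraction of all $(\ell+1)^n$ such vectors, and the progression-freeness is inherited. This is where the variance constant $\sigma_m$ enters: the variance of a single uniform digit on $\{0,1,\dots,\ell\}$ with $\ell=(m-1)/2$ is $\frac{(\ell+1)^2-1}{12} = \frac{\ell(\ell+2)}{12}$, and summing $n$ of them and applying the local CLT produces a leading constant $\frac{1}{\sqrt{2\pi n\,\mathrm{Var}}}$ times $(\ell+1)^n$. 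Expanding $\mathrm{Var}$ as a polynomial in $m$ and tracking the numerical constants (the extra factors $3\sqrt3$ presumably coming from a cruder but fully rigorous tail bound rather than the exact CLT constant) yields the stated $\sigma_m$ and $C_m$.

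The routine part is the Stirling / local-CLT bookkeeping and the explicit polynomial $\frac{1}{2880}(m^4+4m^3-14m^2-36m+45)$, which I would derive by computing the variance of the digit distribution and simplifying; the asymptotic $C_m \sim \frac{8\sqrt5}{\sqrt3\,m^2}$ then falls out since $\sigma_m \sim m^2/\sqrt{2880}$ and $\sqrt{2880} = 24\sqrt5$, giving $C_m = \frac{1}{3\sqrt3\,\sigma_m} \sim \frac{24\sqrt5}{3\sqrt3\,m^2} = \frac{8\sqrt5}{\sqrt3\,m^2}$. The one genuine obstacle is verifying the rigidity claim in full generality: unlike the $m=4$, digit-set $\{0,1,2\}$ case where the middle term of a progression is forced, here with digit set $\{0,\dots,\ell\}$ one must argue that no nontrivial ``carry-free'' progression exists among equal-frequency words, which requires checking that $x_i + z_i = 2y_i$ over $\Z$ with all three in $\{0,\dots,\ell\}$ and matching global digit-frequency profiles forces $x_i = z_i$ for every $i$ — this is a short combinatorial lemma (a convexity/majorization argument on the frequency vectors), but it is the crux and must be stated carefully. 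I would isolate it as a preliminary lemma before doing the counting.
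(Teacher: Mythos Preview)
Your Salem--Spencer part (fixing the full frequency vector of digits in $\{0,1,\dots,\ell\}$, $\ell=(m-1)/2$) is fine and matches the paper's first, weaker construction; as you note it only gives $((m+1)/2)^n/n^{\ell/2}$. The gap is in your passage to the $1/\sqrt n$ saving.

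You propose to take all vectors with digits in $\{0,\dots,\ell\}$ and fixed linear sum $\sum_i x_i$, and claim ``the progression-freeness is inherited.'' It is not. For $m=5$ ($\ell=2$), the vectors $(0,2),(1,1),(2,0)$ all have digit-sum $2$ and form a genuine $3$-AP in $\Z_5^2$. The point is that the linear functional $\sum_i x_i$ is \emph{affine}, so it is automatically compatible with the midpoint condition $x+z=2y$ and imposes no obstruction whatsoever. The rigidity argument you isolate at the end applies only to the full-frequency construction, not to the single-statistic one; once you drop down to one linear constraint there is nothing left to prove, and indeed the conclusion is false.

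What the paper actually does for the $1/\sqrt n$ factor is a Behrend sphere: fix the \emph{quadratic} statistic $\sum_i\bigl(a_i-\tfrac{m-1}{4}\bigr)^2=R$. Because the digits never wrap modulo $m$, a $3$-AP in this set would be a genuine $3$-AP on a Euclidean sphere, and the standard convexity computation $\sum 2d_i^2=0$ forces the common difference to vanish. Then Chebyshev plus pigeonhole (not a local CLT) on the random variable $Z_i=(a_i-\tfrac{m-1}{4})^2$ produces a radius $R$ with at least $\frac{1}{3\sqrt3\,\sigma_m\sqrt n}\bigl(\tfrac{m+1}{2}\bigr)^n$ points, where $\sigma_m^2=\mathrm{Var}(Z_i)$. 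This variance is a \emph{fourth}-moment quantity in the digit, which is why $\sigma_m^2$ is the stated degree-$4$ polynomial in $m$; your $\mathrm{Var}(a_i)=\ell(\ell+2)/12$ is degree $2$ in $m$ and could never produce the formula in the theorem. (Your final asymptotic arithmetic $\sigma_m\sim m^2/\sqrt{2880}$ is correct for the paper's $\sigma_m$ but is inconsistent with the variance you yourself computed, which should have been a red flag.)
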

In the case $m=3$ this would give a lower bound of $\gg\frac{2^n}{\sqrt{n}}$ only which is smaller than the trivial lower bound by taking all $2^n$ elements with coordinate entries $0$ or $1$.
Also note that in view of $r_k(\Z_m^n) \geq r_3(\Z_m^n)$ the Theorem trivially 
induces lower bounds for any $k\geq 3$ (also in the theorem below).

A crucial idea again is to avoid any product construction and to use
one more digit than Tao and Vu \cite[exercise 10.1.3]{TaoandVu:2006}  used, with some extra constraints, which are less costly (if $m$ is constant and $n$ increases). 
Their lower bound $\frac{m^n}{2^n}\cdot\frac{1}{ m^2 n^2}$ in case $m=4$
would also be weaker than the trivial progression-free set $\{0,1\}^n$ with $2^n$ elements.
\begin{theorem}{\label{thm:m-even}}
Let $m\geq 4$ be even.
There exists some $C_m>0$ such that
\[ r_3(\Z_m^n) \geq 
\frac{C_m}{\sqrt{n}}\left(\frac{m+2}{2}\right)^n.\]
With
 $\sigma_m = \sqrt{\frac{m^4+8m^3+4m^2-48m}{2880}}$ one can choose
$C_m=\frac{1}{3\sqrt{3}\sigma_m}$.
For large $m$ one has that $C_m \sim \frac{8 \sqrt{5}}{\sqrt{3}\, m^2}$.
\end{theorem}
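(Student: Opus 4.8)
The plan is to mirror the digit-based construction sketched for Corollary \ref{result1} and formalized (for odd $m$) in Theorem \ref{r_3(Z_m^n)-odd-m}, adapting it to even $m$. I would work with vectors in $\Z_m^n$ whose coordinates are drawn from the ``small'' digits $\{0,1,\dots,m/2\}$ together with the single ``large'' digit $m-1$ (or equivalently $-1$); this is the extra digit beyond the Tao--Vu range, and it is cheap to control because wrapping modulo $m$ can only happen through it. Concretely, fix a target frequency vector: prescribe that each small digit $j\in\{0,1,\dots,m/2\}$ occurs a fixed number $f_j$ of times among the $n$ coordinates (and the digit $m-1$ occurs the remaining $n-\sum f_j$ times, or is forbidden entirely in the simplest version). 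Among all vectors with these digit frequencies, select the sub-family on which some quadratic statistic — the analogue of Behrend's $\sum a_i^2$, here $\sum_i a_i^2$ reduced appropriately, or a linear statistic $\sum_i a_i$ — takes its most popular value; pigeonhole then guarantees a family of size at least (total count)$/$(number of possible values of the statistic), the denominator being only polynomial in $n$.

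The key steps, in order: (1) show the selected family is $3$-progression-free. If $x,y,z$ form a proper AP with $x+z=2y$, then coordinatewise $x_i+z_i\equiv 2y_i \pmod m$; using that all entries lie in the restricted digit set one argues, as in the sketch, that no carrying occurs, so the relation holds over $\Z$, and then the fixed value of the quadratic (or linear) statistic together with strict convexity forces $x=z$, contradicting properness. The delicate point is the digit $m-1$: I would either exclude it and lose nothing in the exponential rate, or include it and check that the middle point $y$ cannot have the offending digit, exactly the ``012 pattern cannot occur'' phenomenon. (2) Optimize the frequencies $f_j$ to maximize the exponential growth rate of $\binom{n}{f_0,f_1,\dots,f_{m/2}}\prod(\text{multiplicities})$; a Lagrange-multiplier / entropy computation should show the optimum makes each of the $m/2+1$ small digits roughly equally likely, yielding base $(m+2)/2$. (3) Track the polynomial loss: a $1/\sqrt{n}$ from the multinomial concentration (Stirling) and the pigeonhole denominator, and pin down the constant $C_m$ and the variance-type quantity $\sigma_m = \sqrt{(m^4+8m^3+4m^2-48m)/2880}$ by a local central limit theorem for the statistic, i.e.\ by computing the variance of $a_i^2$ (or of $a_i$) when $a_i$ is uniform on $\{0,1,\dots,m/2\}$.

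The main obstacle I anticipate is step (3): getting the \emph{exact} constant $C_m$ and the closed form for $\sigma_m$, rather than just the exponential rate. This requires identifying precisely which quadratic statistic is used, computing its mean and variance under the uniform-on-$\{0,\dots,m/2\}$ model (the polynomial $m^4+8m^3+4m^2-48m$ should fall out of $\mathrm{Var}(a^2)$ summed, after dividing by $2880 = 2\cdot 1440$), and then invoking a sufficiently uniform local limit theorem so that the pigeonhole bin of maximal size genuinely has $\gg n^{-1/2}$ of the mass with an explicit leading constant $\frac{1}{3\sqrt3\,\sigma_m}$. Everything else — the progression-free property and the entropy optimization giving $(m+2)/2$ — is routine once the odd-$m$ argument of Theorem \ref{r_3(Z_m^n)-odd-m} is in hand, since the only structural change for even $m$ is that the admissible small-digit set $\{0,1,\dots,m/2\}$ has one more element than in the odd case, which is exactly what upgrades the base from $(m+1)/2$ to $(m+2)/2$.
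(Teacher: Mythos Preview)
Your overall framework (restricted digits plus a Behrend sphere plus pigeonhole) is the right one and matches the paper, but there is a genuine gap in the progression-free step, and it is precisely the point where the even-$m$ case differs from the odd-$m$ case.

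First, drop the digit $m-1$: it is not needed and it is not what the paper does. The digit set $\{0,1,\dots,m/2\}$ already has $(m+2)/2$ elements when $m$ is even, so that alone gives the base $(m+2)/2$. Your ``delicate point is the digit $m-1$'' is a red herring.

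The real delicate point, which you miss, is the interaction between the digits $0$ and $m/2$. Your claim that ``no carrying occurs, so the relation holds over $\Z$'' is \emph{false} for even $m$: with $x_i,z_i,y_i\in\{0,\dots,m/2\}$ and $x_i+z_i\equiv 2y_i\pmod m$, the two patterns $(x_i,y_i,z_i)=(0,\,m/2,\,0)$ and $(m/2,\,0,\,m/2)$ satisfy the congruence but not the integer equality. So the convexity/sphere argument cannot be applied coordinatewise as you state it. The paper's fix is to centre the sphere at $(m/4,\dots,m/4)$, i.e.\ take $R=\sum_i(a_i-m/4)^2$. Then the two exceptional patterns contribute identically to all three points, since $(0-m/4)^2=(m/2-m/4)^2$; separating these coordinates out, the remaining coordinates carry genuine integer progressions $p_i-d_i,\,p_i,\,p_i+d_i$, and the sphere condition forces $\sum 2d_i^2=0$, hence all $d_i=0$. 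Now every coordinate is either constant or one of the two exceptional patterns, and in both exceptional patterns the first and third entries agree, so $P_1=P_3$, contradicting properness. This is the one new idea for even $m$, and your write-up does not contain it.

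For the constant: no local limit theorem is needed. The paper simply applies Chebyshev with $a=\sqrt{3}$ to the random variable $Z=\sum_i(a_i-m/4)^2$ (with $a_i$ uniform on $\{0,\dots,m/2\}$), getting that at least $2/3$ of all $((m+2)/2)^n$ vectors have $Z$ in an interval of length $2\sqrt{3}\,\sigma_m\sqrt{n}$, and then pigeonholes. This gives $C_m=\frac{2/3}{2\sqrt{3}\,\sigma_m}=\frac{1}{3\sqrt{3}\,\sigma_m}$ directly. The quantity $\sigma_m^2$ is $\mathrm{Var}\bigl((a-m/4)^2\bigr)$ for $a$ uniform on $\{0,\dots,m/2\}$, and a direct computation yields the stated polynomial $(m^4+8m^3+4m^2-48m)/2880$.
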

(A version of this result, in the special case $m=8$ has also been observed in \cite{Petrov-and-Pohoato:2018}, 
having seen a precursor of this manuscript. Their main concern is an improvement of the upper bound.)

As is well known from Behrend's construction there are good reasons to restrict to half of the available digits. In the above cases we go up to one element more than half of the digits. In the cases of even $m$ one additionally has to study progressions of type $0 \frac{m}{2}0$ carefully. In the examples below we go even further, and note that those progressions which actually use the reduction modulo $m$ cause quite a bit of extra work. 
(For example, in the case $r_4(\Z_{11}^n)$ we have to care about
progressions of type $1,6,0,5$ modulo $11$.)

\begin{theorem}{\label{k=4,mod11}}
The following holds
\[r_4(\Z_{11}^n) \gg \frac{7^n}{n^3}. \]
\end{theorem}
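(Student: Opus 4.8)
The plan is to adapt the digit-based construction behind Theorems \ref{r_3(Z_m^n)-odd-m} and \ref{thm:m-even} to the case $k=4$, $m=11$, which means constructing a set $S\subseteq\Z_{11}^n$ with no proper $4$-term arithmetic progression and $|S|\gg 7^n/n^3$. Following Behrend, I would restrict the coordinate entries to a limited range of digits, here $\{0,1,2,3,4,5,6\}$ (one more than half the available residues, i.e.\ $\lfloor 11/2\rfloor+1=6$ is the top digit), and then fix two statistics of the digit vector: the number of occurrences of each digit value (or at least the total count of ``large'' digits) and the sum of squares $\sum_i a_i^2$. Fixing $\sum a_i^2$ to its most popular value restricts to a sphere and, by strict convexity, forbids any $3$-term progression $a,b,c$ with $a+c=2b$ in which the three vectors are \emph{not} all equal — this is the standard Behrend mechanism and it is what gives the main term $7^n$ (the number of digit vectors with entries in a $7$-element set, of a fixed composition type, is $\gg 7^n/n^{O(1)}$ by Stirling). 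The extra polynomial loss $n^3$ compared with $n^{1/2}$ in Theorem \ref{thm:m-even} comes from the extra constraints needed to kill the ``wrap-around'' progressions peculiar to $k=4$.

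The key point — and the main obstacle — is that for $k=4$ and $m=11$ a putative progression $x_1,x_2,x_3,x_4$ with common difference $d$ satisfies $x_1+x_3=2x_2$ \emph{and} $x_2+x_4=2x_3$, but when we reduce coordinatewise modulo $11$ the convexity argument only controls progressions in which no coordinate ``wraps'': as the authors flag in the remark preceding the statement, one must separately handle progressions whose coordinate patterns genuinely use the reduction mod $11$, e.g.\ a coordinate running through $1,6,0,5$ (here $5\cdot\{0,1,2,3\}+1\equiv 1,6,11,16\equiv 1,6,0,5\pmod{11}$). So the strategy is: (i) show that if \emph{every} coordinate of the progression avoids wrap-around, then the lifted progression lives in $\Z^n$, lies on the Behrend sphere, and hence is trivial ($x_1=x_2=x_3=x_4$), contradicting properness; (ii) impose additional linear/counting constraints on the digit vectors in $S$ that are incompatible with \emph{any} coordinate exhibiting a wrap-around pattern such as $1,6,0,5$ or its reverse $5,0,6,1$ or the other residue-classes of bad $4$-patterns. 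Because the digits are confined to $\{0,\dots,6\}$, the only $4$-term progressions mod $11$ that can occur coordinatewise and are not honest integer progressions are a short explicit list (those using common difference $d\in\{5,6,\dots\}$ in that coordinate), and one checks that each such pattern forces the coordinate's digit in $x_1$ (say) to be large, i.e.\ in $\{5,6\}$; fixing the number of large digits and additionally fixing, say, $\sum_i a_i \bmod{11}$ or the weighted sum $\sum_i i\cdot a_i$ to a popular value then rules these out — each extra fixed statistic costs at most a factor $n$ (one popular value among $O(n)$), which accounts for the $n^3$.

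Concretely, the steps in order: (1) define $S$ as the set of $a\in\{0,\dots,6\}^n$ with a prescribed number $\ell\approx n/7$ of each digit value — or more economically with a prescribed count of large digits and a prescribed value of $\sum a_i^2$ — intersected with one or two further ``anti-wrap'' congruence conditions; (2) estimate $|S|\gg 7^n/n^3$ via Stirling and the pigeonhole choice of popular values for the $\le 3$ fixed statistics; (3) suppose $x_1,x_2,x_3,x_4\in S$ form a proper $4$-AP with difference $d\in\Z_{11}^n$, and split coordinates into ``non-wrapping'' and ``wrapping''; (4) show the anti-wrap conditions force there to be no wrapping coordinate, so $d$ lifts to $\Z^n$ and the progression is an honest integer progression with all four terms on the Behrend sphere; (5) invoke strict convexity of $\|\cdot\|_2^2$ to conclude $x_1=x_2=x_3=x_4$, contradicting properness. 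The delicate part is step (4): one must verify that the short list of coordinatewise bad patterns (progressions mod $11$ with entries in $\{0,\dots,6\}$ that are not monotone lifts) is indeed incapacitated by the chosen auxiliary constraints, and that only $O(1)$ such constraints — hence only a polynomial loss — are needed. I expect this case analysis of bad $4$-patterns modulo $11$ to be the technical heart of the argument, exactly as the authors anticipate in their remark about the pattern $1,6,0,5$.
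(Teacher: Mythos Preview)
Your proposal heads in a reasonable direction but diverges from the paper's proof in a way worth noting, and it also leaves the decisive step unverified.

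The paper does \emph{not} use a Behrend sphere plus auxiliary ``anti-wrap'' congruences. It uses a pure Salem--Spencer construction: take all vectors in $\{0,1,\dots,6\}^n$ in which each digit $j\in\{0,\dots,6\}$ occurs exactly $n/7$ times (assuming $7\mid n$). The factor $n^{-3}$ is not the cost of three separately chosen statistics; it is simply the multinomial asymptotic $\binom{n}{n/7,\dots,n/7}\sim C\,7^n/n^{(7-1)/2}=C\,7^n/n^3$. So your accounting of the polynomial loss is off: fixing all seven digit frequencies is one package that costs $n^{-3}$, and nothing further is imposed.

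The heart of the paper's argument is then a short combinatorial exploitation of the equal-frequency property, not convexity. One lists the twelve nonconstant $4$-term progressions in $\Z_{11}$ with all entries in $\{0,\dots,6\}$ (including the two wrap-around patterns $1605$ and $5061$), lets $d(abcd)$ be the number of coordinates carrying pattern $abcd$, and uses that each digit occurs equally often in each of the four positions to derive linear relations among the $d(\cdot)$. For instance, counting $0$'s in positions $3$ versus $1$ gives $d(1605)=d(0123)+d(0246)$; counting $1$'s in positions $2$ versus $1$ then forces $d(1234)=0$, which cascades (via the digit $4$) to kill $d(0246)$ and six further patterns. One is left with only $0123,1605,3210,5061$, and for these neither $2$ nor $6$ occurs at a boundary position and neither $3$ nor $5$ occurs in the middle, so every coordinate progression is constant, contradicting properness.

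By contrast, your step (4) --- showing that a handful of global statistics like $\sum a_i\bmod 11$ or a ``count of large digits'' rule out the wrap patterns $1605$ and $5061$ in every coordinate --- is asserted but not carried out, and it is not obvious such global congruences suffice: a wrap coordinate can be compensated by non-wrap coordinates so as to preserve a single linear invariant. The paper's equal-frequency condition is much stronger (it pins down the multiset of digits in every position simultaneously), and that strength is exactly what drives the short counting argument above. If you want to keep a Behrend-flavoured version, you would still need a genuine verification for the wrap patterns, and the cleanest way to get it is precisely to impose the full equal-frequency condition --- at which point the sphere becomes superfluous.
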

(No attempt was made to reduce the exponent $3$.)
For comparison Lin and Wolf \cite{Lin-Wolf:2010} have a lower bound of about about $6.04^n$. (For fixed $k$ the improvement increases, as $m$ increases.)

It is clear that on a case by case study one can prove related results for several individual values of $m$ and $k$. Here we present two further cases where these ideas are generalized to   
infinite families $m=p^s, k=p^{s-1}+1$ (or $k=p^{s-2}+1$ respectively), 
where $p$ is prime.
It should be noted that in this case the set of digits used is not
consecutive, but makes use of the structure of
orbits of length $p$, and hence the algebraic structure. As can be seen, several good properties are preserved: many progression types can be excluded by the Salem-Spencer ``same-frequency property", and the ``all-elements-distinct" property, (i.e.~proper progressions).
\begin{theorem}{\label{thm:m-primepower-progression}}
Let $m=p^s$ be a pure prime power, $s\geq 2$.
Let $k=p^{s-1}+1$. Then there exist constants $C_m>0$ and $0< c_m \leq m/2$ such that the following holds: 
\[r_k(\Z_m^n)\geq C_m \frac{(m-p+1)^n}{n^{c_m}}.\]
\end{theorem}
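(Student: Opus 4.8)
\textbf{Proof proposal for Theorem \ref{thm:m-primepower-progression}.}

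The plan is to generalize the ``extra-digit plus frequency constraint'' construction of Theorems \ref{r_3(Z_m^n)-odd-m} and \ref{thm:m-even} to the prime-power modulus $m=p^s$, replacing the consecutive block of digits $\{0,1,\dots,\lfloor m/2\rfloor\}$ by a cleverly chosen set $D\subseteq\Z_m$ of size $m-p+1$ that respects the orbit structure of the subgroup $pZ_m$ (equivalently the fibers of reduction $\Z_{p^s}\to\Z_{p^{s-1}}$). Concretely, I would take $D$ to consist of a set of coset representatives together with enough extra elements so that $|D|=m-p+1=p^s-p+1$; the point is that among any $p$ elements lying in a common coset of the order-$p$ subgroup, at most one (or a bounded number) can belong to $D$, which is exactly what kills the ``wrap-around'' progressions of length $k=p^{s-1}+1$. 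First I would fix a frequency vector: for each digit $d\in D$ prescribe that a candidate vector $x\in D^n$ uses $d$ in roughly $n\lambda_d$ coordinates, for a probability vector $(\lambda_d)_{d\in D}$ to be optimized; the admissible set $S$ is then all $x\in D^n$ with exactly these digit-frequencies AND satisfying one further quadratic (Behrend-type or second-moment) constraint $\sum_i f(x_i)=$ const, where $f$ is a suitable function on $D$ chosen to break the remaining progressions that the frequency condition alone does not handle.

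The key steps, in order. (1) Describe $D$ and verify the orbit property: any arithmetic progression $a,a+r,\dots,a+(k-1)r$ of length $k=p^{s-1}+1$ with common difference $r$ has the property that if $r$ has order dividing $p$ in $\Z_m$ (i.e.\ $r\in p^{s-1}\Z_m$) then the $k$ terms cycle through a full coset-structure and cannot all lie in $D$; if $r$ has larger order, then the projection to $\Z_{p^{s-1}}$ (or an appropriate quotient) is a genuine, non-constant progression, and here I invoke the Salem--Spencer same-frequency idea coordinatewise. (2) Show that the frequency constraint forces the ``middle behaviour'': as in Proof 2 of Corollary \ref{result1}, a putative progression in $S$ must have a well-defined structure, and the equal-frequency condition across coordinates rigidly couples the boundary terms; combined with the quadratic constraint this forces $r=0$, i.e.\ the three/$k$ points coincide, contradicting properness. (3) Count: the number of $x\in D^n$ with the prescribed frequencies is a multinomial coefficient $\binom{n}{\lambda_{d_1}n,\dots}$, which by Stirling is $\asymp |D|^n n^{-(|D|-1)/2}$ up to constants depending on $m$; imposing the single extra quadratic constraint costs at most a further polynomial factor $n^{-O(1)}$ (pigeonhole over $O(n)$ possible values of $\sum f(x_i)$, picking the most popular level set). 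This yields $|S|\gg_m (m-p+1)^n/n^{c_m}$ with $c_m=(|D|-1)/2+O(1)=(p^s-p)/2+O(1)$, which is of the claimed form $C_m(m-p+1)^n/n^{c_m}$. (4) Optimize $(\lambda_d)$ and $f$ to make the argument in step (2) actually go through; uniform weights $\lambda_d=1/|D|$ together with a generic $f$ should suffice, but this is where the precise choice matters.

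The main obstacle I expect is step (2), specifically handling the progressions that genuinely use the reduction modulo $m$ --- the analogue of the $0\,\tfrac m2\,0$ progressions in the even case and the $1,6,0,5$ progressions in $\Z_{11}$ flagged in the discussion before Theorem \ref{k=4,mod11}. Because $D$ is not an interval, ``no wrapping'' is no longer automatic from a digit-range restriction, so one must show that the orbit/coset combinatorics of $D$ plus the frequency and quadratic constraints together rule out \emph{every} residual progression type of length $k$; enumerating these types and checking each is the technical heart, and it is presumably why the theorem only claims the existence of some $c_m$ rather than an explicit optimal exponent. A secondary nuisance is that the frequencies $\lambda_d n$ need to be integers, so one carries error terms of size $O(1)$ in each coordinate count and absorbs them into $C_m$ and into the polynomial loss, exactly as in the proofs of Theorems \ref{r_3(Z_m^n)-odd-m} and \ref{thm:m-even}; I would simply cite those arguments for the routine parts of the Stirling estimate and the pigeonhole over level sets.
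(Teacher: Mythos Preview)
Your proposal has the right high-level shape --- a Salem--Spencer digit set $D\subseteq\Z_m$ with $|D|=m-p+1$, equal-frequency constraints, multinomial counting --- but the concrete choices you sketch are wrong in a way that breaks the argument.

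First, your description of $D$ is internally inconsistent. You write that ``among any $p$ elements lying in a common coset of the order-$p$ subgroup, at most one \ldots\ can belong to $D$''. The order-$p$ subgroup is $p^{s-1}\Z_m$, its cosets have $p$ elements each, and there are $p^{s-1}$ of them; your condition forces $|D|\le p^{s-1}$, which is far smaller than the required $|D|=p^s-p+1$. The paper's $D$ is essentially the opposite: it is $\Z_m$ with $p-1$ elements removed, all from a \emph{single} fiber of the reduction $\Z_{p^s}\to\Z_{p^{s-1}}$ (specifically $D=\Z_m\setminus\{ip^{s-1}-1:2\le i\le p\}$). Thus $D$ contains $p^{s-1}-1$ full fibers and exactly one element of the remaining fiber, namely the ``special'' digit $p^{s-1}-1$.

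Second, and consequently, your case split in step~(1) is the wrong dichotomy and the mechanism you propose does not survive. The correct split is by whether $p\mid r$ or $p\nmid r$ (not whether $r\in p^{s-1}\Z_m$). If $p\mid r$ then $r$ has order dividing $p^{s-1}$ in $\Z_m$, so a progression of length $k=p^{s-1}+1$ has first and last term equal --- nothing forbids these from lying in $D$, the point is rather that they coincide. If $p\nmid r$ then the $k$ terms hit every residue class modulo $p^{s-1}$, in particular the class $-1$, which meets $D$ only in the single element $p^{s-1}-1$; hence every such progression in $D$ contains $p^{s-1}-1$, and (crucially) it cannot \emph{start} there since the last term would then lie in the deleted set. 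Now the equal-frequency constraint fixes the positions of the special digit $p^{s-1}-1$ across all $k$ points, forcing those coordinates to carry constant progressions; the remaining coordinates carry only progressions whose first and last entries agree, so the first and last points of the putative $k$-AP are identical, contradicting properness.

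Finally, the Behrend-type quadratic constraint you propose is unnecessary here --- the argument above uses only the Salem--Spencer equal-frequency condition --- and adding it risks pushing $c_m$ above $m/2$. With the correct $D$ and case analysis, the multinomial count alone gives $c_m=(|D|-1)/2=(p^s-p)/2\le m/2$, matching the statement.
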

\begin{corollary}
There exist positive constants $C_m$ and $c_m\leq m/2$ such that the following holds:
\[r_3(\Z_4^n)\geq C_m \frac{3^n}{n^{c_m}}.\]
\[r_5(\Z_8^n)\geq C_m \frac{7^n}{n^{c_m}},\]
\[r_{10}(\Z_{27}^n)\geq C_m \frac{25^n}{n^{c_m}}.\]
\[r_{26}(\Z_{125}^n)\geq C_m \frac{121^n}{n^{c_m}}.\]  
\[r_{102}(\Z_{101^2}^n)\geq C_m \frac{10101^n}{n^{c_m}}.\]
\end{corollary}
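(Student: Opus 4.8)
The plan is to obtain each of the five displayed inequalities as a direct specialization of Theorem~\ref{thm:m-primepower-progression}. That theorem applies to any pure prime power $m=p^s$ with $s\geq 2$ and, for $k=p^{s-1}+1$, furnishes constants $C_m>0$ and $0<c_m\leq m/2$ together with a set in $\Z_m^n$ avoiding proper $k$-term progressions of size at least $C_m (m-p+1)^n/n^{c_m}$. So the whole argument reduces to picking the right pair $(p,s)$ for each line and checking the three elementary identities $m=p^s$, $k=p^{s-1}+1$, and $(\text{base})=m-p+1$.

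Concretely, for the first line take $(p,s)=(2,2)$, so $m=4$, $k=2+1=3$, and $m-p+1=3$; for the second take $(p,s)=(2,3)$, giving $m=8$, $k=4+1=5$, $m-p+1=7$; for the third take $(p,s)=(3,3)$, giving $m=27$, $k=9+1=10$, $m-p+1=25$; for the fourth take $(p,s)=(5,3)$, giving $m=125$, $k=25+1=26$, $m-p+1=121$; and for the last take $(p,s)=(101,2)$, giving $m=101^2=10201$, $k=101+1=102$, and $m-p+1=10201-101+1=10101$. In each instance $s\geq 2$, so the hypothesis of Theorem~\ref{thm:m-primepower-progression} is satisfied, and the constants $C_m>0$ and $0<c_m\leq m/2$ appearing in each line are simply those the theorem supplies for that particular $m$ (they are not asserted to be uniform across the five lines).

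There is essentially no obstacle here beyond bookkeeping; the only point worth a moment's care is confirming that the power of $n$ in the denominator can indeed be taken as $c_m\leq m/2$ in all five cases, but this bound on $c_m$ is part of the conclusion of Theorem~\ref{thm:m-primepower-progression} itself, holding uniformly in $p$ and $s$, so nothing further is required. (Should one want explicit constants one would have to trace through the proof of the theorem — in particular the variance-type quantity governing the Gaussian concentration of the relevant digit statistic — but the statement as phrased asserts only existence.)
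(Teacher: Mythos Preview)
Your proof is correct and is exactly the intended argument: the paper states this corollary immediately after Theorem~\ref{thm:m-primepower-progression} without a separate proof, precisely because each line is a direct specialization with the parameter choices $(p,s)=(2,2),(2,3),(3,3),(5,3),(101,2)$ that you spell out. One small inaccuracy in your closing parenthetical: the proof of Theorem~\ref{thm:m-primepower-progression} in the paper is of Salem--Spencer type (a multinomial coefficient estimated via Stirling), not a Behrend/Gaussian-concentration argument, so there is no variance quantity to trace through---but this does not affect the validity of your derivation.
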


\begin{theorem}{\label{thm:primepowerprogressions-k=p^{s-2}+1}}
Let $m=p^s$ be a pure prime power, $s\geq 3$.
Let $k=p^{s-2}+1$. 
Then there exist constants $C_m>0$ and $0< c_m \leq m/2$ such that the following
holds: 
\[r_k(\Z_m^n)\geq C_m \frac{(m-2p^2+2p)^n}{n^{c_m}}.\]
\end{theorem}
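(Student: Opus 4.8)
The plan is to adapt, "one level deeper", the explicit construction behind Theorem~\ref{thm:m-primepower-progression}, exploiting the $p$-adic filtration of $\Z_m=\Z_{p^s}$. The observation that fixes the value $k=p^{s-2}+1$ is: if $x^{(0)},\dots,x^{(k-1)}$ is an arithmetic progression in $\Z_m^n$ with common difference vector $d$, then $(k-1)d=p^{s-2}d$, and in coordinate $i$ one has $p^{s-2}d_i\equiv 0\pmod{p^s}$ exactly when $p^2\mid d_i$; hence if $p^2\mid d_i$ for every $i$ the progression is improper. So in any \emph{proper} progression there is a coordinate $i_0$ with $v_p(d_{i_0})\in\{0,1\}$, and the construction only needs to make such coordinates impossible.

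First I would build the digit set. Write $\Z_m$ as the union of the $p$ cosets of the subgroup $p\Z_m$, each isomorphic to $\Z_{p^{s-1}}$ via $c+pt\mapsto t$. Inside each coset I install the digit set that the construction of Theorem~\ref{thm:m-primepower-progression} uses for the pair $(\Z_{p^{s-1}},\,p^{(s-1)-1}+1)$; this occupies $p^{s-1}-p+1$ residues per coset, hence $p(p^{s-1}-p+1)=m-p^2+p$ in total. From this pool I then delete a further $p^2-p$ residues — chosen, as dictated by the progression-freeness step, so that a global weight constraint stays consistent — leaving a digit set $D\subseteq\Z_m$ with exactly $|D|=m-2p^2+2p$. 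The set $S$ is then the set of $x\in D^n$ subject to (i) a Salem--Spencer frequency constraint fixing, for each distinguished class of digits, the number of coordinates lying in it, and, if needed, (ii) a Behrend-type level constraint $\sum_i w(x_i)=\mathrm{const}$ for a weight $w\colon\Z_m\to\Z$ constant on the relevant cosets; both constraints are designed so that the two flavours $v_p(d_{i_0})=0$ and $v_p(d_{i_0})=1$ are each excluded. For the size bound I take the fixed class-frequencies proportional to the class sizes: the number of admissible digit strings is then a product of multinomial coefficients times an $O(n)$ factor from the level constraint, and by Stirling this is $\gg|D|^n/n^{c_m}=(m-2p^2+2p)^n/n^{c_m}$, where $c_m$ is half the number of independent linear constraints imposed; since at most about $m$ scalars are fixed, $c_m\le m/2$.

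The heart of the matter is showing $S$ is $k$-progression-free. Given a putative proper progression with difference $d\ne 0$, pick $i_0$ with $v_p(d_{i_0})\le 1$. If $v_p(d_{i_0})=1$, then in coordinate $i_0$ the $p^{s-2}+1$ terms $x^{(0)}_{i_0}+ld_{i_0}$ are distinct and lie in one coset of $p\Z_m$, where, under the identification with $\Z_{p^{s-1}}$, they form an arithmetic progression of length $p^{(s-1)-1}+1$ with \emph{unit} common difference — precisely the configuration that the per-coset data (a copy of Theorem~\ref{thm:m-primepower-progression}'s construction) rules out once combined with the global frequency information, exactly as in that theorem. If $v_p(d_{i_0})=0$, then $d_{i_0}$ is a unit and the progression sweeps cyclically through all $p$ cosets of $p\Z_m$; here the extra $p^2-p$ deleted digits together with the weight $w$ are what forbid any $k$-string of digits realising such a sweep. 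In both cases the dangerous coordinates are forced to vanish, contradicting properness.

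I expect the main obstacle to be the $v_p(d_{i_0})=0$ case: a unit difference cuts each coset of $p\Z_m$ only in a \emph{short} sub-progression (of length about $p^{s-3}+1$), too short for the per-coset construction to handle on its own, so it is the additional digit deletions and the quadratic weight that must carry it — and arranging these so that they neither lower the exponential base $m-2p^2+2p$ nor conflict with the frequency constraints is the delicate bookkeeping. By contrast, eliminating all progressions whose difference has $v_p(d_i)\ge 2$ in every coordinate is automatic from the choice $k=p^{s-2}+1$ together with the "all terms distinct" hypothesis, following the template already established for the earlier prime-power theorems.
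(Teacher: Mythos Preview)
Your proposal has the right skeleton --- a digit set $D$ of size $m-2p^2+2p$, a Salem--Spencer frequency constraint, and a case split on the $p$-adic valuation of the common difference --- but the two non-trivial cases are not actually carried out. For $v_p(d_{i_0})=0$ you explicitly defer the argument, saying only that some unspecified $p^2-p$ deletions together with an unspecified weight $w$ ``must carry it''; this is precisely the content of the theorem, and nothing here tells you which digits to delete or why the resulting configuration forbids unit-gap progressions. For $v_p(d_{i_0})=1$ your reduction to Theorem~\ref{thm:m-primepower-progression} is not valid as stated: that theorem's argument is \emph{not} a single-coordinate obstruction but a global one (the special digit's fixed frequency forces it to sit only in constant coordinates, whence first and last vectors coincide). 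Looking at one coordinate in one coset does not give a contradiction by itself, and you have not explained how the global frequency data interacts with the coset-wise picture.

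The paper's proof is more direct and avoids both the recursion and any Behrend weight. It writes down the digit set explicitly as $D=(\Z_m\setminus(D_1\cup D_2))\cup D_3$ where $D_1$ is the set of multiples of $p^{s-2}$, $D_2=\{ip^{s-1}+j:0\le i\le p-1,\ 1\le j\le p-1\}$, and $D_3=\{0,1,\dots,p-1\}$; one checks $|D|=m-2p^2+2p$. It then classifies $k$-progressions inside $D$ into four types according to the $p$-valuation of the gap and shows, crucially, that any progression \emph{starting} at a digit in $D_3$ must be constant (Type~III progressions must pass through~$0$ but cannot start there; Type~II progressions must use a digit of $D_3\setminus\{0\}$ but not at an endpoint; Type~I progressions avoid~$0$). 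With the equal-frequency constraint on all $|D|$ digits, every occurrence of a $D_3$-digit in the first vector forces that coordinate to be constant, and in the remaining coordinates the first and last entries coincide --- so the progression is improper. This is the missing idea: the role of the explicit ``anchor'' set $D_3$ and the fact that the remaining digits only support progressions with equal endpoints. No quadratic weight is needed; the Salem--Spencer frequency alone suffices.
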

For $p=2$, this is certainly not best possible. By Theorem
\ref{thm:m-even} for $m=8, k=3$ one can use
5 digits, rather than 4.
\begin{corollary}
There exist positive constants $C_m$ and $c_m\leq m/2$ such that the following holds:
\[r_{p+1}(\Z_{p^3}^n)\geq C_m \frac{(p^3-2p^2+2p)^n}{n^{c_m}}.\]
\[r_{4}(\Z_{27}^n)\geq C_m \frac{15^n}{n^{c_m}}.\]
\[r_{82}(\Z_{729}^n)\geq C_m \frac{717^n}{n^{c_m}}.\]
\[r_6(\Z_{125}^n)\geq C_m \frac{85^n}{n^{c_m}}\]
\[r_{26}(\Z_{625}^n)\geq C_m\frac{585^n}{n^{c_m}}.\]
\end{corollary}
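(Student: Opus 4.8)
The plan is to give an explicit Behrend/Salem--Spencer-type construction, of the same shape as in Theorems~\ref{r_3(Z_m^n)-odd-m}, \ref{thm:m-even}, and above all Theorem~\ref{thm:m-primepower-progression}: a set $S=\{x\in D^n:\ x\text{ satisfies a prescribed same-frequency condition}\}$, where $D\subseteq\Z_{p^s}$ is a ``digit set'' of size $m-2p^2+2p$ chosen compatibly with the subgroup chain $p^{s-1}\Z_{p^s}\subset p^{s-2}\Z_{p^s}\subset p\Z_{p^s}\subset\Z_{p^s}$. The starting observation is that a \emph{proper} progression of length $k=p^{s-2}+1$ in $\Z_m^n$ must have a common difference $v$ with $v_p(v_i)\le 1$ in at least one coordinate $i$: if $v_p(v_i)\ge 2$ everywhere then $\mathrm{ord}(v_i)\le p^{s-2}=k-1$ in every coordinate and the $k$ points cannot be distinct (this is where $s\ge 3$ enters, via $p^{s-1}\ge p^{s-2}+1>p^{s-2}$). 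Since $m-2p^2+2p=p\,(p^{s-1}-2(p-1))$, I would take $D$ to be a union of cosets of the order-$p$ subgroup (``orbits of length $p$'') arranged so that, inside each of the $p$ cosets $C$ of $p\Z_{p^s}$, the trace $D\cap C$ --- read inside $C\cong\Z_{p^{s-1}}$ --- is the digit set of a Theorem~\ref{thm:m-primepower-progression} construction for $\Z_{p^{s-1}}$ with an extra $p-1$ elements deleted; removing $p-1$ per coset for each of these two purposes removes exactly $2(p^2-p)$ elements in all.

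The heart of the argument is then a case analysis on $v$. If $v_p(v_i)\ge 1$ in every coordinate (so $v=pw$ and $w_i$ is a unit in the critical coordinate), then in every coordinate the $k$ entries $x^{(0)}_i+j\,pw_i$ stay in a single coset of $p\Z_{p^s}$; hence the whole progression lies in a coset $C^n$ of $(p\Z_{p^s})^n$, and under $C\cong\Z_{p^{s-1}}$ it becomes a proper progression of length $k=p^{(s-1)-1}+1$ with a unit-difference coordinate. By the design of $D$ and of the frequency condition, $S\cap C^n$ pulls back to a construction of exactly the type covered by Theorem~\ref{thm:m-primepower-progression} in $\Z_{p^{s-1}}^n$, which has no such progression --- contradiction. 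If instead some coordinate has $v_p(v_i)=0$, one argues as in Theorems~\ref{r_3(Z_m^n)-odd-m} and \ref{thm:m-even}: lift the relevant coordinates to $\Z$, combine the $k-2$ linked relations $x^{(j)}-2x^{(j+1)}+x^{(j+2)}=0$ with the same-frequency condition, and deduce by Cauchy--Schwarz/strict convexity that all $k$ points coincide. The point of the additional $p-1$ deletions per coset is that with $|D|$ nearly as large as $m$ one cannot forbid wraparound modulo $p^s$ outright; the orbit-respecting shape of $D$ is what forces any wraparound to push a progression entry out of $D$ --- the same kind of phenomenon that makes the $0,\tfrac m2,0$-type progressions tractable in Theorem~\ref{thm:m-even}.

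For the size, I would take all prescribed digit-frequencies essentially equal (each $\approx n/\ell$, with $\ell$ depending only on $p,s$), so that $|S|$ is a product of multinomial coefficients; Stirling then gives $|S|\ge C_m\,(m-2p^2+2p)^n/n^{c_m}$, where $c_m$ is half the number of digits carrying a frequency constraint. This number is at least $1$ (the constraint is non-trivial) and at most $|D|-1=p^s-2p^2+2p-1<m$, so $0<c_m\le m/2$. Non-divisibility of $n$ by $\ell$ costs only a bounded factor, absorbed into $C_m$.

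The main obstacle is not the valuation-$1$ case --- that is essentially a clean reduction to the already-established Theorem~\ref{thm:m-primepower-progression} one level down --- but the choice of $D$ together with the frequency condition: one must delete the $2(p^2-p)$ elements and prescribe the constraint so that \emph{simultaneously} the traces $D\cap C$ have the required sub-type in every coset $C$, the frequency condition restricts correctly to each $C^n$, and every valuation-$0$ progression either leaves $D^n$ or violates the frequency condition. Making these three requirements consistent, and checking that no progression type escapes, is where the real work lies; the clean base $m-2p^2+2p$ and the bound $c_m\le m/2$ then fall out.
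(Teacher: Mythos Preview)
The Corollary is an immediate specialization of Theorem~\ref{thm:primepowerprogressions-k=p^{s-2}+1}: each line simply plugs in specific values of $p$ and $s$ (e.g.\ $p=3,s=3$ gives $k=4$, $m=27$, $m-2p^2+2p=15$). No separate argument is needed, and the paper gives none. What you have written is not a proof of the Corollary but an attempted re-proof of Theorem~\ref{thm:primepowerprogressions-k=p^{s-2}+1} along different lines.

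Viewed as such, your plan has a genuine gap and also diverges substantially from the paper's argument. The paper does \emph{not} take $D$ to be a union of cosets of the order-$p$ subgroup, and does not reduce to Theorem~\ref{thm:m-primepower-progression} one level down. Instead it sets $D=(\Z_m\setminus(D_1\cup D_2))\cup D_3$ with $D_3=\{0,1,\dots,p-1\}$, and classifies one-dimensional $k$-progressions in $D$ into four types by the $p$-valuation of the gap. The key point is that any progression \emph{starting} at an element of $D_3$ must be constant; the same-frequency constraint then forces all $D_3$-entries to sit in constant coordinate-progressions, and for every remaining digit the first and last entries of any allowed progression coincide, so the first and last of the $k$ vectors are equal --- contradicting properness. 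There is no convexity step and no inductive descent in $s$.

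Your ``valuation-$0$'' case is the specific place where the plan breaks. You propose to lift to $\Z$ and use Cauchy--Schwarz/strict convexity as in Theorems~\ref{r_3(Z_m^n)-odd-m} and~\ref{thm:m-even}. But those arguments rely on $D$ being contained in an interval of length at most $m/2$, so that a coordinate progression $a-d,a,a+d$ in $D$ lifts without wraparound. Here $|D|=m-2p^2+2p$ is nearly all of $\Z_m$, so no such interval exists and a unit-gap progression of length $k$ wraps around in general; the convexity mechanism is unavailable. You acknowledge this yourself (``one cannot forbid wraparound modulo $p^s$ outright'') but then defer the resolution to an unspecified ``orbit-respecting shape of $D$'', and end by saying that making the three requirements consistent ``is where the real work lies''. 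That is exactly the missing idea: you have not exhibited a digit set and frequency constraint that actually rule out the valuation-$0$ progressions, and the paper's mechanism for doing so (the special role of $D_3$ and the first-equals-last property for the other digits) is absent from your outline.
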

We are not aware of any earlier results of this type.



\ \\
We now briefly discuss some aspects of the proofs of the exact values, and of a conditional upper bound.
For the estimations of $r_3(\mathbb{Z}_4^n)$ we shall need a reformulation of the problem which is presented in Section {\ref{sec:reformulation}}. Let us say that a system of subsets $A(x)\subseteq \mathbb{F}_2^n$ ($x\in\mathbb{F}_2^n$) satisfies property $(*)$, if 
the following implication holds:
$$
\forall x\in \mathbb{F}_2^n  \ (y\in x+A(x)\hat{+}A(x) \implies A(y)=\emptyset). \eqno{(*)}
$$
(Note that for $A(x)=\emptyset$ we define $x+A(x)\hat{+}A(x):=\emptyset$.)
In Lemma~\ref{3apeq} we will show that the answer to this question is exactly $r_3(\mathbb{Z}_4^n)$, that is, estimating the maximal total size of a system of subsets $\{A(x):x\in\mathbb{F}_2^n\}$ satisfying $(*)$ is equivalent with our original question.

As it turns out it is very useful 
that we can reduce the
case of arbitrary subsets to the case of subspaces.
We do not know, if this can be done for higher dimension, but for 
the low dimensions studied here explicitly this
is a quite powerful method.
In this case, the upper bound $O(3^n)$ is quite close to the general lower bound in the unrestricted case,
namely $r_3(\Z_4^n)\gg 3^n/\sqrt{n}$.
This is the heuristic reason why we state conjecture \ref{conj_3-o(1)}.

\begin{theorem}\label{subspace_thm}
If the system of subsets $A(x)$ satisfies $(*)$ and all non-empty subsets $A(x)$ are subspaces, then $\sum\limits_{x\in \mathbb{F}_2^n} |A(x)|\leq  3^n$.
\end{theorem}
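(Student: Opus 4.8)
<br>

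The plan is to exploit the self-similar structure of the condition $(*)$ together with the linear-algebraic rigidity coming from the assumption that each nonempty $A(x)$ is a subspace. First I would set up the notation: write $V=\mathbb{F}_2^n$, and for each $x\in V$ let $A(x)$ be either empty or a linear subspace of $V$, and define $W(x):=\langle A(x)\rangle=A(x)$ (since it is already a subspace). The key observation is that $A(x)\hat{+}A(x)=A(x)$ when $A(x)$ is a nonzero subspace (because subspaces are closed under addition and contain $0$, so $a+a'$ ranges over all of $A(x)$ as $a\ne a'$ vary, as long as $\dim A(x)\ge 1$), and $A(x)\hat{+}A(x)=\emptyset$ exactly when $|A(x)|\le 1$. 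So for a subspace $A(x)$ of dimension $\ge 1$, condition $(*)$ says: for every $y\in x+A(x)$ we have $A(y)=\emptyset$. In other words, the affine subspace $x+A(x)$ (a coset of $A(x)$ through $x$) forces $A$ to vanish on all of its points except possibly $x$ itself — and in fact including $x$ unless $x\in x+A(x)$, which happens iff $0\in A(x)$, which is always true, so actually $A(x)=\emptyset$ is forced too when $\dim A(x)\ge 1$... wait, that would make the statement trivial, so I need to be careful: the restricted sumset $A(x)\hat{+}A(x)$ does contain $0$ (take $a\ne a'$ with $a+a'=0$, i.e. $a'=a$, impossible) — so in fact $0\notin A(x)\hat{+}A(x)$, hence $x\notin x+A(x)\hat{+}A(x)$, and $A(x)$ itself is not constrained. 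Good. So the real content is: $x+\big(A(x)\setminus\{0\}\big)$ is a set on which $A\equiv\emptyset$.

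The main strategy I would pursue is a weighting / counting argument summing over $x$. For each $x$ with $A(x)$ a subspace of dimension $d(x)\ge 1$, the set $B(x):=x+(A(x)\setminus\{0\})$ has size $2^{d(x)}-1$ and lies in the zero-set $Z:=\{y: A(y)=\emptyset\}$. The quantity we want to bound is $\sum_x |A(x)| = \sum_{x:\,d(x)\ge 1} 2^{d(x)} + |\{x: d(x)=0, A(x)\ne\emptyset\}|$. I would try to show that large subspaces $A(x)$ "cost" a correspondingly large chunk of $Z$, and that these chunks cannot overlap too much. Concretely, I expect to prove an inclusion-exclusion or fractional-relaxation inequality: assign to each point $y\in V$ a budget, and charge $A(x)$ against the points of $B(x)$; the catch is that distinct $x$'s can share points of $Z$, so I would need to understand when $B(x)\cap B(x')\ne\emptyset$ and bound the multiplicity. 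Since $B(x)\subseteq x+A(x)$ and these are affine subspaces, two of them either are disjoint, or equal, or meet in an affine subspace of smaller dimension; the subspace hypothesis is exactly what makes this dichotomy available and is the reason the theorem (presumably) fails for arbitrary $A(x)$.

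A cleaner route, which I would try first, is induction on $n$ by splitting on a coordinate. Fix the last coordinate and write $V=V_0\sqcup V_1$ according to whether $x_n=0$ or $x_n=1$; each half is a copy of $\mathbb{F}_2^{n-1}$. For $x\in V_0$, project $A(x)$ and consider whether it lies in the hyperplane $\{x_n=0\}$ or not. If every $A(x)$ for $x$ in a given half is contained in the corresponding hyperplane, the two halves decouple and we get $\sum |A(x)| \le 3^{n-1}+3^{n-1} < 3^n$ by the inductive hypothesis applied twice. The work is in handling subspaces $A(x)$ that "cross" between the two halves: such an $A(x)$ contains a vector with $x_n=1$, so $B(x)=x+(A(x)\setminus\{0\})$ puts points into the opposite half, killing $A$ there, and I would use this cross-killing to set up a trade-off — a large crossing subspace at $x$ forces a large empty region in the other half, which frees up room in the recursion. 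Quantifying this trade-off so that the two halves plus the crossing terms still total at most $3^n = 3\cdot 3^{n-1}$ is the crux; I anticipate the bookkeeping around overlapping forbidden cosets in a single half to be the main obstacle, and I would likely need an auxiliary lemma bounding $\sum_x |A(x)|$ over a family where a prescribed large subset is forced empty, proved by the same induction with the empty set carried as an extra parameter.
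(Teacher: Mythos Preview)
Your initial observation is correct and useful: when $A(x)$ is a subspace of dimension $\ge 1$, one has $A(x)\hat{+}A(x)=A(x)\setminus\{0\}$, so $(*)$ forces $A(y)=\emptyset$ for every $y$ in the punctured coset $x+(A(x)\setminus\{0\})$. But from this point on neither of your two proposed routes is actually carried out, and each has a real obstacle.

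For the charging argument: distributing the weight $|A(x)|=2^{d(x)}$ over the $2^{d(x)}-1$ points of $B(x)\subseteq Z$ gives at most a constant charge per point of $Z$, hence only $\sum|A(x)|\le O(|Z|)\le O(2^n)$, which is far too weak. To reach $3^n$ you would need a multiplicity bound on how many cosets $x+A(x)$ can pass through a given $y\in Z$, and you give no mechanism for this; in fact there is no uniform bound on that multiplicity. For the induction: the decoupled case is fine, but in the crossing case you yourself flag the ``bookkeeping around overlapping forbidden cosets'' as the crux and defer it to an unstated auxiliary lemma. Getting the recursion to close up at exactly $3\cdot 3^{n-1}$ (not $3^{n-1}+3^{n-1}+{}$something) requires a sharp trade-off that you have not formulated, and it is not clear such an inductive scheme exists.

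The paper's proof is entirely different and worth knowing. It stratifies by codimension: let $X_k=\{x:\operatorname{codim}A(x)=k\}$. For $x\in X_k$ pick a basis $x^{(1)},\dots,x^{(k)}$ of $A(x)^\perp$; the condition $(*)$ says that for $y\ne x$ in $X_k$ there is some $i$ with $(x+y)x^{(i)}=1$. Lifting to $\mathbb{F}_2^{n+1}$ via $\hat x=(x,1)$ and $\hat x^{(i)}=(x^{(i)},1+xx^{(i)})$ one gets $\hat x\hat x^{(i)}=1$ and $\hat y\hat x^{(i)}=0$ for that $i$. Then the tensors $u(x)=\hat x^{\otimes k}$ and $v(x)=\hat x^{(1)}\otimes\cdots\otimes\hat x^{(k)}$ form a biorthogonal system in $(\mathbb{F}_2^{n+1})^{\otimes k}$, but the $u(x)$ all lie in the symmetric part of dimension $\sum_{i\le k}\binom{n+1}{i}$, whence $|X_k|\le\sum_{i\le k}\binom{n+1}{i}$. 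Summing $|X_k|\cdot 2^{n-k}$ over $k$ gives $\sum|A(x)|\le 6\cdot 3^n$. Finally the constant $6$ is removed by the tensor power trick: the product system $A((x_1,\dots,x_t))=A(x_1)\times\cdots\times A(x_t)$ in $\mathbb{F}_2^{nt}$ again satisfies $(*)$ with all subspaces, so $S^t\le 6\cdot 3^{nt}$ for all $t$, forcing $S\le 3^n$. None of this structure is visible in either of your sketches.
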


Note that for $n=1$ any $2$-element subset forms a progression-free subset in $\Z_4^n$. If $n\in\{2,3,4\}$, then the extremal construction is also unique in the following sense:

\begin{theorem}\label{unique}
Let $n\in \{2,3,4\}$. 
If the systems of subsets $\{A(x):x\in \mathbb{F}_2^n\}$ and $\{A'(x):x\in \mathbb{F}_2^n\}$ both have total size $r_3(\mathbb{Z}_4^n)$ and they satisfy $(*)$, then there is an invertible affine linear transformation $\varphi: \mathbb{Z}_2^n\to \mathbb{Z}_2^n$ and vectors $c(x)\in \mathbb{Z}_2^n$ ($x\in \mathbb{Z}_2^n$) such that $A'(x)=A(\varphi(x))+c(x)$ for every $x\in \mathbb{Z}_2^n$.
\end{theorem}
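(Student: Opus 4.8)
# Proof Proposal for Theorem~\ref{unique}

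\textbf{Overall strategy.} The plan is to leverage the classification of extremal systems that emerges from the case analyses underlying Theorems~\ref{3APthm} and~\ref{subspace_thm} (in dimensions $2,3,4$), and to show that once the "shape" of an extremal system is pinned down, the only freedom left is an affine change of coordinates on the ambient $\mathbb{F}_2^n$ together with independent translations of the individual subsets $A(x)$. First I would recall from the reformulation in Section~\ref{sec:reformulation} that an extremal system $\{A(x)\}$ of total size $r_3(\mathbb{Z}_4^n)$ satisfying $(*)$ can, by the subspace-reduction method mentioned before Theorem~\ref{subspace_thm}, be taken (after translating each $A(x)$ so that $0\in A(x)$ whenever $A(x)\neq\emptyset$) to have all nonempty $A(x)$ equal to \emph{subspaces}; the translation vectors absorbed in this step are exactly the $c(x)$ appearing in the statement. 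So it suffices to prove uniqueness up to the affine automorphism group of $\mathbb{F}_2^n$ for the \emph{normalized} systems in which every nonempty $A(x)$ is a linear subspace.

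\textbf{Key steps.} (1) For each $n\in\{2,3,4\}$, run (or cite) the finite case analysis showing that a normalized extremal system is described by: a designated affine-linear structure on the index set $\mathbb{F}_2^n$ (e.g.\ a distinguished flag or pair of complementary subspaces), together with a rule assigning to each $x$ the subspace $A(x)$ as a function of which "stratum" $x$ lies in. The upper bounds $r_3(\mathbb{Z}_4^2)=6$, $r_3(\mathbb{Z}_4^3)=16$, $r_3(\mathbb{Z}_4^4)=42$ from Theorem~\ref{3APthm} force the dimension profile $(|A(x)|)_{x}$ — i.e.\ the multiset of subspace dimensions used — and the tightness of property $(*)$ then forces how these subspaces must be nested and how their supports must sit inside $\mathbb{F}_2^n$. (2) Given two normalized extremal systems $\{A(x)\}$, $\{A'(x)\}$ with the \emph{same} combinatorial description, construct $\varphi$: it is the (unique up to the stabilizer) affine transformation carrying the distinguished structure of the second system to that of the first; check it is invertible and affine-linear by construction. (3) Verify $A'(x)=A(\varphi(x))+c(x)$: on the normalized representatives the translation $c(x)$ is $0$ and one needs $A'(x)=A(\varphi(x))$ as subspaces, which follows because $\varphi$ was chosen precisely to intertwine the stratum-assignment rules; then unwind the normalization to reinstate the general $c(x)$. (4) Finally confirm that property $(*)$ is preserved under the transformation $x\mapsto\varphi(x)$, $A(x)\mapsto A(\varphi(x))+c(x)$ — this is immediate since $(*)$ is stated purely in terms of sumsets and translation, both equivariant under affine maps of $\mathbb{F}_2^n$ acting diagonally on indices and values.

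\textbf{Main obstacle.} The hard part will be step~(1): extracting from the extremal case analysis not merely the \emph{value} $r_3(\mathbb{Z}_4^n)$ but a genuine \emph{rigidity} statement — that every extremal configuration, up to coordinate change, looks like the one explicit construction. This requires showing there are no "sporadic" extremal systems of a combinatorially different shape. In low dimension one argues by exhausting the possible dimension profiles consistent with the total-size bound, then for each profile showing property $(*)$ together with the size constraint leaves no room: typically one shows the largest subspace $A(x_0)$ must have a prescribed dimension, that its translate-support is forced, that the "killed" indices $y\in x_0+A(x_0)\hat{+}A(x_0)$ must carry $A(y)=\emptyset$, and that filling the remaining budget optimally is possible in only one way up to symmetry. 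The case $n=4$ is the delicate one and presumably consumes most of the detailed case study in Sections~\ref{sec3ap1}, \ref{sec3ap2}; I would organize it by first handling $n=2$ (nearly trivial: $6=4+2$ or $6=2+2+2$, and checking which profile actually meets $(*)$), then bootstrapping to $n=3$ by restricting an extremal $4$-dimensional-index system to suitable affine hyperplanes and invoking the lower-dimensional classification, and similarly $n=3\to n=4$. The equivariance checks in steps~(3)–(4) and the construction of $\varphi$ in step~(2) are routine linear algebra once the rigidity of step~(1) is in hand.
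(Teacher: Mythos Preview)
Your outline has the right overall shape---normalize so that nonempty $A(x)$ are subspaces, extract a rigidity statement from the upper-bound case analysis, then build $\varphi$---and this is indeed what the paper does. But your step~(1) is too vague to count as a proof, and in one place it points in the wrong direction.

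The concrete mechanism you are missing is this: the paper anchors $\varphi$ not on ``the largest subspace $A(x_0)$'' or on any flag-like structure, but on the set $E=\{x:A(x)=\emptyset\}$ of \emph{empty indices}. The upper-bound case analysis already pins down $|E|$ exactly (one empty set for $n=2$, four for $n=3$, five for $n=4$), and the short extra argument needed is that $E$ is an \emph{affinely independent} set. For $n=3$: if the four empty indices formed an affine $2$-subspace, then for any nonempty $x$ the affine $2$-subspace $x+A(x)$ would have to contain exactly three of them together with $x$, which is impossible. For $n=4$: one shows the five empty indices have pairwise-distinct three-term sums, which forces affine independence. Once $E$ is affinely independent, an affine map sends it to $\{0,e_1,\dots,e_{n-1}\}$ (or $\{0,e_1,\dots,e_4\}$), and then each remaining $A(x)$ is \emph{uniquely determined} by the constraint $x+A(x)\hat{+}A(x)\subseteq E$---there is no further case analysis, just a direct check. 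In the $n=4$ case there is one residual degree of freedom (which index carries the lone $1$-subspace and which $1$-subspace it is), absorbed by a permutation of $E$.

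Two specific issues with what you wrote: first, for $n=2$ the profile $6=4+2$ is impossible---if some $|A(x)|>2^{n-1}=2$ then $x+A(x)\hat{+}A(x)=\mathbb{F}_2^2\setminus\{x\}$ and all other sets are empty, giving total size at most $4$. The only extremal profile is $2+2+2+0$. Second, your proposed bootstrapping from $n$ to $n+1$ by restricting to affine hyperplanes is not what the paper does and is unlikely to work cleanly: the restriction of an extremal system to a hyperplane of indices need not be extremal in the lower dimension (for $n=4$ the extremal system has one index carrying a $1$-subspace, and hyperplane slices will have varying total sizes). The paper simply treats each $n$ independently, reading the forced dimension profile directly off the equality cases in the upper-bound proof.
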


\section{Proofs of the asymptotic lower bounds}\label{secproof}
We will use several times that the central multinomial coefficients
can be approximated by Stirling's formula:
\begin{lemma}{\label{multinomial}}
Let $d \geq 2$ be an integer. There exists a constant $c_d$ such that
\[ \binom{ d n}{n,\ldots , n}\sim c_d \frac{d^{dn}}{n^{(d-1)/2}}.\]
\end{lemma}
Here we give a geometrically inspired proof of
Corollary \ref{result1}, which is independent of Theorem 
\ref{const}.

\begin{proof}[Proof of Corollary \ref{result1} (Proof 2):]
The crucial idea is that an arithmetic progression of length 3 
(with 3 distinct points) in $\Z_4^n$ 
has a uniquely defined middle point. For comparison, 
this is not the case in $\Z_3^n$.

We relate the problem to a problem posed by Leo Moser \cite{Moser:1970}.
Find in $H=\{0,1,2\}^n$ the maximal set of elements without 
``three on a line''.
(which is also known as Moser's cube problem). Observe that in this case 
there is no reduction modulo 3.
Let $f(n)$ denote the largest such number in $H=\{0,1,2\}^n$.
It is known that $f(1)=2, f(2)=6, f(3)=16$,
(see Chv\'{a}tal \cite{Chvatal:1973}), $f(4)=43$ (see Chandra \cite{Chandra:1973}),
$f(5)=124, f(6)=353$ \cite{Polymath}.
In dimensions 1, 2, 3 and  5 these values are the same as $r_3(\Z_4^n)$, but in
dimension 4 one has that $r_3(\Z_4^4)=42<f(4)=43$.

A simple observation by Koml\'{o}s \cite{Komlos} shows that
$f(n) \gg \frac{3^n}{\sqrt{n}}$, and the implicit constant
 was refined again by Chv\'{a}tal \cite{Chvatal:1972}. The construction by Chv\'{a}tal
relates the problem to coding theory and gives 
$f(5) \geq 124$, for example.

Let us adapt Koml\'{o}s' \cite{Komlos} observation to our situation:
the set 
\[ S= \{(x_1, \ldots , x_n)\in \{0,1,2\}^n: 
x_i=1 \text { for } m=\lfloor n/3 \rfloor 
\text{ values } i\}\]
has the claimed number of elements and has no three points on a 
line.

Let us count the number of such points, let 
$n$ be  a multiple of $3$, then by Stirling's formula 
$S$ has 
\[
\begin{array}{ccl}
\vert S\vert&=&
2^{n-m}\binom{n}{m}
= 2^{2n/3}\binom{n}{n/3}\\
&\sim &
2^{2n/3}\frac{\sqrt{2 \pi n}n^n}{e^n}
\frac{e^{n/3}}{\sqrt{2\pi n/3}(n/3)^{n/3}}
\frac{e^{2n/3}}{\sqrt{2\pi 2n/3}(2n/3)^{2n/3}}
\gg \frac{3^n}{\sqrt{n}}
\end{array} \]
elements. When $n\equiv 1,2 \bmod 3$ 
we have the same order of magnitude, up to a constant factor, for example,
by filling the extra 1 or 2 coordinates with entries from $\{0,1\}$. 
Further observe that for three points $P_1, P_2, P_3$ to be on a line (in this
order), one would need, in each coordinate, that \\
i) all entries are the same,\\
or ii) the entries are $0,1,2$ or $2,1,0$ (in this order). 
Since the number of ``middle entries 1'' is constant for all points, there
cannot be an arithmetic progression of three distinct digits.

Let us embed the set $S$ from $\{0,1,2\}^n$ canonically into
$G=(\Z_4^n,+)$. Think of  $G$ as the lattice points $\{0,1,2,3\}^n$ 
but now with reduction modulo $4$ in each coordinate.
Observe that the set $S$ does not have a single ``3''-entry.
An arithmetic progression of length 3
modulo $4$ that does not make use of $x_i=3$ in any
coordinate must be of one of the types below,
in a given coordinate.\\
The digits are:\\
i) the same,\\
ii) or are $0,1,2$ or $2,1,0$ in this order,\\ 
iii) or $0,2,0$, or $2,0,2$.

We will show that the set $S\subset \Z_4^n$ does not contain a proper 
3-progression. Suppose $S$ does contain three distinct points $P_1,P_2,P_3$
in arithmetic progression.
The case i) where all entries are the same does not play any role.
Let us look at those coordinates where the entries differ.
Since all points have the same number of 1 entries, let us study,
where one of the three elements uses a ``1'', but another point does not:
For this, the only possibilities are  $0,1,2$ and $2,1,0$.
But here only the middle point $P_2$ 
can make use of a 1. So, the two points $P_1$ and $P_3$ 
cannot make use of their ones, unless all three
entries are identically 1. This means that all three points have their 
ones in exactly the same position, and that there is no coordinate with a progression 
012 or 210. So, let us look at the other coordinates.
The only possibilities left are 020 or 202.
But then $P_1$  and $P_3$  would be the very same point, a
contradiction to the definition of a proper progression.
\end{proof}
\begin{proof}[Proof of Proposition \ref{limit}]
The idea of this proof might go back to Shannon \cite{Shannon:1956}, see also
Davis and Maclagan \cite{Davis-Maclagan}.
Let $\alpha_{k,m}(n)=\left(r_k(\Z_m^n)\right)^{1/n}$, so that we have the following properties:
By the product construction  (Lemma \ref{product-construction}) we have 
\[ r_k(\Z_m^{n_1})r_k(\Z_m^{n_2}) \leq  r_k(\Z_m^{n_1+n_2}),\] 
i.e.
$\alpha_{k,m}(n_1)^{n_1}\alpha_{k,m}(n_2)^{n_2}\leq \alpha_{k,m}(n_1+n_2)^{n_1+n_2}$ and therefore 
\[n_1\log \alpha_{k,m}(n_1)+n_2 \log \alpha_{k,m}(n_2) \leq (n_1+n_2) \log \alpha_{k,m}(n_1+n_2).
\]
Therefore, the sequence $\{n \log \alpha_{k,m}(n)\}_{n=1}^{\infty}$ is superadditive.
By Fekete's Lemma on superadditive sequences
the limit $\lim_{n \rightarrow \infty} \log \alpha_{k,m}(n)$ exists and equals $\sup_n \log \alpha_{k,m}(n)$.

By Theorems \ref{r_3(Z_m^n)-odd-m} and \ref{thm:m-even} (proofs below)
we know that 
$m^n\geq r_k(\Z_m^n)\geq r_3(\Z_m^n)\gg_m 
\lceil\frac{m+1}{2}\rceil^n \frac{1}{n^{c_m}}$ holds. Hence, for each $k\geq 3$
 we have $\lceil \frac{m+1}{2}\rceil \leq \alpha_{k,m} \leq m$. When $k=3$ 
it follows for all $m \geq 3$ that $\alpha_3 <m$, for example by applying
Ellenberg and Gijswijt \cite{EllenbergandGijswijt:2016} to any odd prime divisor of $m$, and Croot, Lev and Pach \cite{Croot-Lev-Pach} otherwise.

\end{proof}

\begin{proof}[Proof of Theorem {\ref{r_3(Z_m^n)-odd-m}}:]
We first prove a slightly weaker result
based on the Salem-Spencer construction \cite{SalemandSpencer:1942}
for sets of integers without arithmetic 3-progression.
Recall that $m$ is odd and that we only need to study
$k=3$. Assume first that $n$ is a multiple of $(m+1)/2$.
Choose vectors with digits
\[a_i \in \left\{0,1,2,\ldots , \frac{m-1}{2}\right\}\]
with exactly 
$n_i$ entries of digit $i$, where $i\in \left\{0,1,2,\ldots , \frac{m-1}{2}\right\}$.
The number of such vectors is maximized when $n_i= \frac{n}{\frac{m+1}{2}}$ for every $i$.
This gives at least $C_m (\frac{m+1}{2})^n\frac{1}{n^{c_m}}$ points, for positive constants $C_m,c_m$.
If $n$ is not a
multiple of $(m+1)/2$ one can fill the remaining coordinates with entries
$0 \leq a_i <k$,
which slightly weakens the constant $C_m$.

We show that there is no arithmetic $3$-progression:
by the choice of the allowed digits, if the digit $a> 0$ occurs, 
then the digit $m-a\equiv -a \bmod m$ is forbidden, so $0$ is never in the centre of a proper 3-progression. 
As all vectors have the same number of 0-entries, all of these
 digits 0 must occur in the same coordinate position, giving a trivial $000$-progression. 
One then continues:
All nontrivial 3-progressions, without the digit 0 do not have a digit 1 in the centre, 
and hence the digit $1$ can only come from a $111$-progression.


To do an explicit example, let  $m=11, k=3$, we use the 
digits: $0,1,2,3,4,5$. A complete list of all possible 3-progressions of these digits is:
\[  \begin{cases}
000,111,222,333,444,555\\
012, 024, 123, 135, 234, 210, 345, 321, 420, 432, 531, 543.
\end{cases}.\]
As there are three distinct points, there must be  a proper 3-progression
of 3 distinct digits $abc$.
As the digit $0$ is never in the centre of any of these nontrivial 3-progressions,
and as all vectors have the same number of $0$-entries,
the digit can only occur in the trivial way: $000$. This leaves the following 
shorter list of nontrivial 3-progressions:
\[123, 135, 234,  321,  345, 432, 531, 543.\]
Now the digit $1$ is never in the centre, and 1 can only occur in the trivial $111$ progression.
leaving the list $234, 345, 432, 543$.
Now, the digit 2 is never in the centre, so 2 can only occur as $222$, leaving
$345, 543$.
Now $3$ is never in the centre, which gives the final contradiction.

Note that initially we have restricted the frequency of all digits
$0,1,2,3,4,5$, 
but we can now observe that restricting the frequency of the digits $0,1,2,3$ is enough.

We now prove the theorem in its full strength, based on Behrend's construction. The number of elements used is larger by a factor $n^c$ only.

Let $m$ be odd, and $n$ be a multiple of $(m+1)/2$.
Let 
\[
S_R=\left\{ ( a_1, \ldots, a_n): a_i \in \{0,1, \ldots , (m-1)/2\}, \sum_{i=1}^n \left(a_i- \frac{m-1}{4}\right)^2=R\right\}.\]
Here $S_R$ can be thought of as a sphere 
about centre $\left( (m-1)/4, \ldots, (m-1)/4)\right)$
with
$R$ as squared radius.
We prove that all $S_R$ are progression-free and
there exists an $S_R$ of size at least
$C_m \frac{1}{\sqrt{n}}\left(\frac{m+1}{2}\right)^n$.

Suppose there are three distinct points $P_1, P_2, P_3$ in arithmetic progression.
None of the progressions in a fixed coordinate
makes use of the reduction modulo $m$, so that convexity of the geometric sphere gives a contradiction. But let us look at this arithmetically:
Let the progression in the $i$-th coordinate be $a_i-d_i, a_i, a_i+d_i$.
Then for the three points one has that
$\sum_{i=1}^n (a_i-d_i-\frac{m-1}{4})^2=\sum_{i=1}^n (a_i-\frac{m-1}{4})^2=
\sum_{i=1}^n (a_i+d_i-\frac{m-1}{4})^2$.
Then
\[ \sum_{i=1}^n \left( \left(a_i+d_i-\frac{m-1}{4}\right)^2 + \left(a_i-d_i-\frac{m-1}{4}\right)^2- 2 \left(a_i-\frac{m-1}{4}\right)^2\right) =0.\]
This gives $\sum_{i=1}^n 2d_i^2$=0. Hence $d_i=0$ for all $i$. In other words, the three points are identical, which is a contradiction.
The size of large sets $S_R$ follows from the observation that 
most elements in  $(a_1, \ldots , a_n)\in [0, \frac{m-1}{2}]^n$
have a value of $R=\sum_{i=1}^n (a_i- \frac{m-1}{4})^2$ in an interval
of size the standard deviation around the mean value. 
To make this more precise, we follow Elkin \cite{Elkin:2011} 
and consider $a_i- \frac{m-1}{4}$ as independent random variables $Y_1, \ldots, Y_n$, distributed uniformly in $\{-(m-1)/4, \ldots, (m-1)/4\}$, and 
$Z_i=Y_i^2, Z=\sum_{i=1}^n Z_i, i \in \{1, \ldots, n\}$.
The expected value is
$\mu_m :=\E(Z_i)=\frac{1}{(m+1)/2}\sum_{i=-(m-1)/4}^{(m-1)/4}i^2=
\frac{1}{48}m^2+\frac{1}{24}m-\frac{1}{16}$ and
$\E(Z)=n \E(Z_i)$.
The variance is
\[\begin{array}{rcl}
Var(Z_i)&=&\E(Z_i^2)-\E(Z_i)^2\\
&=&
\frac{m^4}{1280}+\frac{m^3}{320}
-\frac{11m^2}{1920}-\frac{17m}{960}+\frac{5}{256}
-\left( \frac{1}{48}m^2+\frac{1}{24}m-\frac{1}{16} \right)^2\\
&=&
\frac{1}{2880}\left(m^4+4m^3-14m^2-36m+45\right),
\end{array}
\]
and $Var(Z)=n Var(Z_i)$.
The standard deviation is 
$\sigma_m=\sqrt{Var(Z_i)}$ and 
$\sigma_Z=\sqrt{Var(Z)}=\sigma_m \sqrt{n}$, 
where $\sigma_m$ depends only on $m$. By Chebychev's inequality $\Prob(|Z-\E(Z)|>a\sigma_Z)\leq \frac{1}{a^2}$. With $a=\sqrt{3}$ we see that for at least two thirds of all elements in $[0, \frac{m-1}{2}]^n$ the sum of digit squares-distances from the centre point
$\left(\frac{m-1}{4}, \ldots, \frac{m-1}{4}\right)$ is in the interval 
$[\mu_m n -a\sigma_Z, \mu_m n+a\sigma_Z]$.
By the pigeonhole principle there exists a squared radius $R$  
with frequency at least
$\frac{C_m}{\sqrt{n}}\left( \frac{m+1}{2}\right)^n$, where 
$C_m=\frac{2}{3\cdot 2 \sqrt{3}\sigma_m}= \frac{1}{3\sqrt{3}\sigma_m}$.

Note that $\sigma_5=\frac{\sqrt{2}}{3}, \sigma_7=1, \sigma_9=\sqrt{\frac{14}{5}}$.
As the proof only makes use of effective bounds, the result is valid for all odd $m\geq 5$ and all $n$.
If the odd value $m$ tends to infinity, then,  
asymptotically $\sigma_m \sim \frac{m^2}{24 \sqrt{5}}$ holds, 
giving the claimed value of $C_m$.

\end{proof}
\begin{remark}
While the Salem-Spencer type construction with all frequencies of the digits being constant is completely explicit, the above Behrend-type proof uses the pigeonhole principle, which is not explicit, and in algorithmic terms slowly, as one would need to search for a good value $R$.
However, a result of Rankin \cite{Rankin:1960} gives entirely explicit bounds on the number of representations of numbers as a sum of $n$ squares of bounded size. In particular this shows that not only there are good values $R$ but that {\emph{all}}
values $R$ in the interval are good, when weakening the constant $C_m$ by a small factor only. In particular, one can choose $R= \lfloor \mu n \rfloor$.
In another direction, as the above argument does not make use of reduction modulo $m$, it seems possible to implement the improvement by Elkin \cite{Elkin:2011}, 
which might gain extra factor, maybe of size $n^c$. Elkin observed that
3-progressions in a suitable union of spheres (annulus) are geometrically quite restricted. One can then prove that there is a large subset of this union which is progression-free. 
\end{remark}

\begin{proof}[Proof of Theorem {\ref{thm:m-even}}:]
Again, we first prove a sightly weaker version based on the Salem-Spencer construction.
This proof is similar to the previous case, but as $m$ is even there is one extra complication to care for.
Assume first that $n$ is a multiple of $(m+2)/2$, and that there is an arithmetic 
progression of three distinct points.

Choose vectors with exactly
$n_i$ entries of digit $i$, where $i\in \{0,1,2,\ldots , \frac{m}{2}\}$.
The number of such vectors is maximized when  $n_i=\frac{n}{\frac{m+2}{2}} $ for every $i$.
This gives at least $(\frac{m+2}{2})^n\frac{C'_m}{n^{c_m}}$ points. 
If $n$ is not a
multiple of $(m+2)/2$ one can fill the remaining coordinates with 0-entries,
which will slightly weaken the constant $C'_m$.

Working out the set of all nontrivial 3-progressions, one observes that
the boundary values $0$ and $m/2$ occur as values in the middle position only in the progressions
of type $0 \frac{m}{2} 0$, $\frac{m}{2} 0\frac{m}{2}$ or constant progressions.
This means that the values of $0$ or $\frac{m}{2}$ can occur in constant 3-progressions,
$000$, $\frac{m}{2} \frac{m}{2}  \frac{m}{2}$ and the same number of
progressions of type $0 \frac{m}{2} 0$ and $\frac{m}{2} 0\frac{m}{2}$.
Hence other nontrivial progressions using $0$, or $\frac{m}{2}$,  like $012$ never occur.

By definition of a 3-progression we search for three {\em distinct} points, this means there must 
be somewhere another nontrivial progression $abc$ with three distinct digits 
in $\{1,2, \ldots , \frac{m}{2}-1\}$. One can then continue iteratively as before, 
and concludes there is no nontrivial 3-progression of 3 distinct points. 
Let us define the Behrend-sphere:
\[S_R=\{(a_1, \ldots, a_n): a_i \in \{0,1, \ldots , m/2\}, \sum_{i=1}^n \left(a_i- \frac{m}{4}\right)^2=R\}.\] 
We prove that $S_R$ is 3-progression-free in $\Z_m^n$. The estimate on the number of points is as in the case of odd $m$ above.

Suppose there are three distinct points $P_1,P_2,P_3$ in arithmetic progression.
The non-constant progressions in a fixed coordinate
do not make use of the reduction modulo $m$, with the two exceptions of $0\frac{m}{2}0$ and $\frac{m}{2}0\frac{m}{2}$.
Let $n_1, n_2, \ldots, n_s$ denote the number of coordinates with a fixed progression-pattern such as $000$, $012$, $024$ etc.
Of these, let $n_1$ count the pattern $0 \frac{m}{2}0$ and
and let $n_2$ count the pattern $\frac{m}{2}0\frac{m}{2}$. As all other patterns do not wrap over modulo $m$ let
$n_i$ count the pattern $p_i-d_i, p_i, p_i+d_i$.

Hence $\sum_{i=1}^s n_i=n$.
The points $(a_1, \ldots, a_n)$ in $S_R$ lie on a sphere with centre $(m/4, \ldots, m/4)$.
Let the progression pattern of the $j$-th coordinates be $p_j-d_j, p_j, p_j+d_j$.
Then for the three points $P_1,P_2,P_3$ one has that
$n_1\frac{m^2}{16} +n_2\frac{m^2}{16} +\sum_i n_i (p_i-d_i-\frac{m}{4})^2=
n_1\frac{m^2}{16} +n_2\frac{m^2}{16} +
\sum_i n_i (p_i-\frac{m}{4})^2=
n_1\frac{m^2}{16} +n_2\frac{m^2}{16} +
\sum_i n_i (p_i+d_i-\frac{m}{4})^2$.
Then
\[ \sum_{i=1}^s n_i \left((p_i+d_i-\frac{m}{4})^2 + (p_i-d_i-\frac{m}{4})^2- 2 (p_i-\frac{m}{4})^2 \right) =0.\]
This gives $\sum_i n_i 2d_i^2=0$. Hence for all patterns with $d_i\neq 0$ one has that $n_i=0$.
The three points only consist of patterns $aaa$, $0\frac{m}{2}0$ or $\frac{m}{2}0\frac{m}{2}$.
Therefore the first and the third point are exactly the same point, in contradiction to the assumption.

We estimate $C_m$ as above:
for $i=1, \ldots , n$ consider $Y_i=a_i- \frac{m}{4}$ as independent random variables, distributed uniformly in $\{-m/4, \ldots, m/4\}$, and 
$Z_i=Y_i^2, Z=\sum_{i=1}^n Z_i, i \in \{1, \ldots, n\}$.
The expected value is
$\mu_m :=\E(Z_i)=\frac{1}{(m+2)/2}\sum_{i=-m/4}^{m/4}i^2=
\frac{1}{48}m^2+\frac{1}{12}m$ and
$\E(Z)=n \E(Z_i)$.
The variance is
\[\begin{array}{rcl}
Var(Z_i)&=&\E(Z_i^2)-\E(Z_i)^2\\
&=&
\frac{m^4}{1280}+\frac{m^3}{160}
+\frac{m^2}{120}-\frac{m}{60}
-\left( \frac{1}{48}m^2+\frac{1}{12}m \right)^2\\
&=&
\frac{1}{2880}\left(m^4+8m^3+4m^2-48m\right),
\end{array}
\]
and $Var(Z)=n Var(Z_i)$.
The standard deviation is 
$\sigma_m=\sqrt{Var(Z_i)}$ and 
$\sigma_Z=\sqrt{Var(Z)}=\sigma_m \sqrt{n}$, 
where $\sigma_m$ depends only on $m$. By Chebychev's inequality $\Prob(|Z-\E(Z)|>a\sigma_Z)\leq \frac{1}{a^2}$. With $a=\sqrt{3}$ we see that for at least two thirds of all elements in $[0, \frac{m}{2}]^n$ the sum of digit squares-distances from the centre point
$\left(\frac{m}{4}, \ldots, \frac{m}{4}\right)$ is in the interval 
$[\mu_m n -a\sigma_Z, \mu_m n+a\sigma_Z]$.
By the pigeonhole principle there exists a squared radius $R$  
with frequency at least
$\frac{C_m}{\sqrt{n}}\left( \frac{m+2}{2}\right)^n$, where 
$C_m=\frac{2}{3\cdot 2 \sqrt{3}\sigma_m}= \frac{1}{3\sqrt{3}\sigma_m}$.

Note that $\sigma_4=\frac{\sqrt{2}}{3}, \sigma_6=1, \sigma_8=\sqrt{\frac{14}{5}}$.
As the proof only makes use of effective bounds, the result is valid for all even $m\geq 4$ and all $n$.
If the even value of $m$ tends to infinity, then  
asymptotically $\sigma_m \sim \frac{m^2}{24 \sqrt{5}}$ holds, 
giving the claimed value of $C_m$.

Note that the values of the constants in the two cases $m$ odd and even are quite similar.

\end{proof}

\begin{proof}[Proof of Theorem {\ref{k=4,mod11}}]
Let $n$ be a multiple of $7$ and let $D=\{0,1,2,3,4,5,6\}$.
Let 
\[S=\{(a_1, \ldots , a_n):a_i \in D \text{ and for each $j\in D$  there are $n/7$ values $i \in \{1, \ldots, n\}$ with $a_i=j$} \}.\]
The list of trivial and nontrivial arithmetic progressions of length $4$
with digits in $D$ modulo $11$ is:
\[  \begin{cases}
0000,1111,2222,3333,4444,5555,6666\\
0\underline{1}23, 0246, 123\underline{4}, 16\underline{0}5, 
\underline{2}345, 3456, 
32\underline{1}0, 
\underline{4}321, 5\underline{0}61, 543\underline{2}, 6420, 6543
\end{cases}
\]
Let $d(a_1 a_2 a_3 a_4)$ denote the number of coordinates, where the pattern
$a_1a_2a_3a_4$ occurs among the 4 points which are in arithmetic progression.
As the digit $0$ occurs in all 4 positions with the same frequency, and applying it to positions 3 and 1 we see that
the number of occurrences of a pattern $1605$ equals the sum of the number of occurrences of patterns $0123$ and $0246$ together. (See underlined symbols in the list of patterns.)
Also looking at digit 1 at positions 2 and 1,
and combining these gives: 
\[\begin{array}{lll}
(1)&d(1605)&=d(0123)+d(0246)\\
(2)&d(0123)&=d(1605)+d(1234)=d(0123)+d(0246)+d(1234), \\
&&\text{ which implies:}\\
(3)&d(1234)&=0.
\end{array}\]
As $1234$ is the {\emph{only}} nontrivial progression with digit 4 in the last position,
all 4's must occur in form of a trivial progression, 4444.
Therefore 
\[d(0246)=d(1234)=d(2345)=d(3456)=d(4321)=d(5432)=d(6420)=d(6543)=0.\]
This leaves only the following nontrivial progressions.
\[
0123,  1605,  3210,  5061\]
Here we observe that there are no digits 2 or 6 at the boundary, and also no digits 3 or 5 in the positions 2 and 3.
So, in each coordinate there can only be a constant progression, which contradicts that we have a proper progression of distinct points in $S$.
The number of elements in $S$ is the multinomial coefficient 
$\binom{n}{n/7,n/7,n/7,n/7,n/7,n/7,n/7}=\frac{n!}{\left((n/7)!\right)^7}\sim C \frac{7^n}{n^3}$ for some constant $C>0$, by Stirling's formula.
If $n$ is not a multiple of $7$, say $n=7r+i$, one adds $i\leq 6$ further coordinates with constant digits, which weakens the overall lower bound by a small factor.
\end{proof}

\begin{proof}[Proof of Theorem {\ref{thm:m-primepower-progression}}]

In this situation we do not take the digits consecutively, but make use of the algebraic structure
of $(\Z_m,+)$. In particular $p^{s-1}$ generates a subgroup of order $p$, and $k$-progressions in $\Z_m$ with gap size divisible by $p$ have the property that the first element is the same as the last element. We choose the digits as follows:
\[ D= \Z_m \setminus \{i p^{s-1}-1: i=2, \ldots, p\}.\]

Observe that $D$ contains $p^{s-1}-1$ complete cycles of length $p$, and one extra element, and so $|D|=(p^{s-1}-1)p+1=p^s-p+1$.
There are three types of progressions of length $k=p^{s-1}+1$ in $D$:
\begin{enumerate}
    \item
Type I progressions have a non-zero gap size divisible by $p$. In this
case the first element and the last element of the progression are the   
same.
\item For Type II progressions the gap size is not divisible by $p$.
In this case all residue classes modulo $p^{s-1}$ occur, and the first
and last element are the same modulo $p^{s-1}$, but
cannot be the same modulo $m=p^s$.
 The residue class $p^{s-1}-1 \bmod m$ must occur, as $D$ contains only
one element $-1 \bmod p^{s-1}$.
We observe that no such $k$-progression can start with $p^{s-1}-1$, as it
would have to end at {\emph{another}}
element $-1 \bmod p^{s-1}$, which is impossible.
\item
Type III progressions are constant.
\end{enumerate}
So far this was the part which generalized the algebraic situation from
$m=4$ to prime powers. The last part is the set-theoretic trick inspired
by Salem and Spencer.

Let $|D|\mid n$ and let
\[S=\left\{ (a_1, \ldots, a_n):a_i \in D, \forall d\in D:
\left| \{j\in [1,n]: a_j=d\}\right|  =\frac{n}{|D|} \right\}.\]
The number $|S|$ of elements is the multinomial coefficient
$\binom{n}{n/|D|, \ldots ,n/|D|}\sim C_m \frac{|D|^n}{n^{(|D|-1)/2}}$ according to Lemma~\ref{multinomial}.
Suppose that $S$ contains a {\emph{proper}} arithmetic progression of length $k=p^{s-1}+1$.

Let us study the occurrence of the digit $p^{s-1}-1$ in the first vector.
It cannot be part of a type I or type II progression, and hence must be a
constant type III progression.
Therefore all coordinate entries  $p^{s-1}-1$ in all vectors occur in the
same positions. In all other coordinates we only have type I and type III
progressions. For these the first and the last elements are the same,
modulo $m$. Hence there cannot be a {\emph{proper}} arithmetic progression
of length $k$, which by definition consists of $k$ distinct elements. 

\end{proof}

\begin{proof}[Proof of Theorem \ref{thm:primepowerprogressions-k=p^{s-2}+1}]
Recall that $m=p^s, s\geq 3, k=p^{s-2}+1$.
Let $D_1=\left\{p^{s-2}i:i =0, \ldots, p^2-1\right\}$,
\[D_2=\left\{p^{s-1}i+j:i\in \{0, \ldots, p-1 \}, j \in \{1, \ldots, p-1 \} \right\} \]
and $D_3=\{0,1,2,,\ldots , p-1\}$.
Choose the digits:
\[ D= \left( \Z_m \setminus (D_1\cup D_2)\right) \cup D_3.\]
Observe that $|D|=p^s-p^2-p(p-1)+p=p^s-2p^2+2p$.
For example, when $m=27,k=4$ , then 
\[D=\{0,1,2, 4,5,7,8,13,14,16,17,22,23,25,26\}.\]
There are four types of progressions of length $k=p^{s-2}+1$ in $D$:\\
Type I progressions with gap size $p^t, 2\leq t< s$, which therefore 
contain a cycle of length $p^{s-t}$. Here the first element and the last element is the same. Note that the class $0$ cannot be part of such a progression, as the element $p^t\cdot  p^{s-1-t}=p^{s-1}$ is not in $D$.\\
Type II: progressions of gap size $p$. They must use exactly one of
the digits in $D_3\setminus \{0\}$, but cannot use it in the first or last position:
starting with $d\in D_3\setminus \{0\}$ and gap size $p$ 
the longest progression size is $k-1$, as otherwise a digit in $D_2$ would be needed, which is impossible. Also the progression cannot contain $0$, as it would then also contain $p\cdot p^{s-2}$, which is impossible.
(Example, $m=27$: the  longest progression with gap size $3$ is:
$22,25,1,4,7$.)\\
Type III progressions have a gap size coprime to $p$, and do not contain any cycle. They consist of $k=p^{s-2}+1$ distinct digits, and in particular go through all residue classes modulo $p^{s-2}$, and therefore contain the special element $0$.
But note that no such progression can start with $0$, as it would also have to end at \emph{another}
element $0\bmod p^{s-2}$, which is impossible.\\
Type IV progressions are constant.

Note that progressions starting with $0$ must be of type IV.
Now let $|D|$ divide $n$ and let 
\[S=\left\{ (a_1, \ldots, a_n):a_i \in D, \forall d\in D: 
\left| \{j\in [1,n]: a_j=d\}\right| =\frac{n}{|D|} \right\}.\]
The number $|S|$ of elements is the multinomial coefficient
$\binom{n}{n/|D|, \ldots ,n/|D|}\sim C_m \frac{|D|^n}{n^{(|D|-1)/2}}$.
As all elements contain the same number of $0$-entries, the constant progressions (type IV) are the only ones that contain any $0$-entry.

Now suppose that $S$ has a proper progression of length $k=p^{s-2}+1$.
All $k$ elements contain in $\frac{n}{|D|}$ positions an entry $d\in D_3$.
Looking at the first element of the progression 
we see that these progressions 
starting with $d\in D_3$ can only be of type IV, i.e.~constant.
Hence all digits $D_3$ cannot take part in any nontrivial progression.
With all other digits in 
$\Z_m \setminus (D_1\cup D_2)$
and with all progression types
we observe that the first and the last elements are the same. 
Altogether, the set $S$ of vectors  
does not have a {\emph{proper}} arithmetic progression of length $k$, 
which by definition consists of $k$ distinct elements.
\end{proof}


\section{Subset reformulation}{\label{sec:reformulation}}

In this section we give a ``subset formulation'' for the question of determining $r_3(\mathbb{Z}_4^n)$ and $r_4(\mathbb{Z}_4^n)$. As an application of the former one, we give another proof for Theorem~\ref{result1}, then we prove Theorem~\ref{subspace_thm}.

\subsection{Reformulation for 3AP-free-ness}
Let us say that a system of subsets $A(x)\subseteq \mathbb{F}_2^n$ ($x\in\mathbb{F}_2^n$) satisfies property $(*)$, if 
the following implication holds:
$$
\forall x\in \mathbb{F}_2^n  \ (y\in x+A(x)\hat{+}A(x) \implies A(y)=\emptyset). \eqno{(*)}
$$
(Note that for $A(x)=\emptyset$ we define $x+A(x)\hat{+}A(x):=\emptyset$.)
Let $r_3'(n)$ denote the maximal possible size of $\sum \limits_{x\in\mathbb{F}_2^n} |A(x)|$, if the system of subsets $\{A(x):x\in \mathbb{F}_2^n\}$ satisfies $(*)$.

The proof of Lemma~\ref{3apeq} (below) shows that property $(*)$ nicely captures the condition that the ``corresponding'' $A\subseteq \mathbb{Z}_4^n$ is 3AP-free.

\begin{lemma}\label{3apeq}
For every $n\geq 1$ we have $r_3(\mathbb{Z}_4^n)=r_3'(n)$.
\end{lemma}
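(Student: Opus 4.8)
The plan is to establish a bijection-like correspondence between $3$AP-free sets $A \subseteq \mathbb{Z}_4^n$ and systems of subsets $\{A(x) : x \in \mathbb{F}_2^n\}$ satisfying $(*)$, so that the maximal total sizes coincide. The key observation is that $\mathbb{Z}_4^n$ decomposes as a $\mathbb{Z}_2^n$-bundle: writing each coordinate $a \in \mathbb{Z}_4$ as $a = a_0 + 2a_1$ with $a_0, a_1 \in \{0,1\}$, every element $v \in \mathbb{Z}_4^n$ is encoded by a pair $(x, w)$ where $x \in \mathbb{F}_2^n$ records the ``low bits'' (residues mod $2$) and $w \in \mathbb{F}_2^n$ records the ``high bits''. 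Given $A \subseteq \mathbb{Z}_4^n$, for each $x \in \mathbb{F}_2^n$ I would define $A(x) \subseteq \mathbb{F}_2^n$ to be the set of high-bit vectors $w$ such that the element encoded by $(x,w)$ lies in $A$. Then $\sum_x |A(x)| = |A|$ automatically, so it suffices to show that $A$ is $3$AP-free in $\mathbb{Z}_4^n$ if and only if $\{A(x)\}$ satisfies $(*)$.

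First I would analyze when three distinct points $P_1, P_2, P_3 \in \mathbb{Z}_4^n$ form a proper arithmetic progression, i.e.\ $P_1 + P_3 = 2P_2$. The crucial arithmetic fact (already exploited in Proof 2 of Corollary~\ref{result1}) is that $2P_2$ only depends on the low bits of $P_2$: if $P_2$ has low-bit vector $x$, then $2P_2 = 2x$ (viewing $x \in \mathbb{F}_2^n \subseteq \mathbb{Z}_4^n$ via $0 \mapsto 0$, $1 \mapsto 2$). Hence the AP condition reads $P_1 + P_3 = 2x$ in $\mathbb{Z}_4^n$. Reducing mod $2$: the low bits of $P_1$ and $P_3$ must be equal; call this common low-bit vector $y$. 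Reducing the coordinates where $y$ has a $1$ versus a $0$ and tracking carries, one finds that $P_1 + P_3 = 2x$ with $P_1 \neq P_3$ forces, in the high bits, $w_1 + w_3 = x + y$ (mod $2$) together with $w_1 \neq w_3$; that is, $P_1, P_3$ have low bits $y$ and high bits $w_1 \neq w_3$ with $w_1 + w_3 = x + y$, equivalently $x = y + w_1 + w_3$ with $w_1, w_3 \in A(y)$ distinct. In other words: a proper $3$AP in $\mathbb{Z}_4^n$ with middle low-bit $x$ exists using points over low-bit $y$ at the ends precisely when $x \in y + A(y)\,\hat{+}\,A(y)$ and $A(x) \neq \emptyset$ (so that a valid middle point exists). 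I would write this carry-analysis out carefully coordinate by coordinate, since this is the technical heart.

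Given that equivalence, the logical structure is immediate: $A$ contains a proper $3$AP iff there exist $x, y \in \mathbb{F}_2^n$ with $x \in y + A(y)\,\hat{+}\,A(y)$ and $A(x) \neq \emptyset$, which is exactly the negation of $(*)$ (with the roles of the quantified variable and $y$ swapped, which is harmless). Note that this also correctly handles the degenerate case: if $A(y)$ has fewer than two elements then $A(y)\,\hat{+}\,A(y) = \emptyset$, matching the convention stated after $(*)$, and the case of a ``progression'' with $P_1 = P_3$ is excluded precisely by requiring $w_1 \neq w_3$, i.e.\ by using the restricted sumset $\hat{+}$ rather than $+$. Therefore the maximal $|A|$ over $3$AP-free $A$ equals the maximal $\sum_x |A(x)|$ over systems satisfying $(*)$, i.e.\ $r_3(\mathbb{Z}_4^n) = r_3'(n)$.

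The main obstacle I anticipate is the carry bookkeeping in the addition $P_1 + P_3 = 2x$ over $\mathbb{Z}_4$: one must verify coordinate-by-coordinate that, given the ends share low bits $y$, the equation in $\mathbb{Z}_4^n$ is equivalent to the single $\mathbb{F}_2^n$-equation $w_1 + w_3 = x + y$ and imposes no further constraint linking different coordinates — in particular that in a coordinate where $y_i = 1$ (so both end-entries are odd, contributing a carry that is absorbed since we land in $\{0, 2\}$) the condition still reduces cleanly. Once this local computation is pinned down, everything else is formal. I would also double-check the edge case $n$ small and the convention $\emptyset + \emptyset = \emptyset$ to make sure the boundary of the correspondence is airtight.
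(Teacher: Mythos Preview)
Your proposal is correct and follows essentially the same approach as the paper: both use the decomposition of $\mathbb{Z}_4^n$ into low bits ($R=\{0,1\}^n$) and high bits ($F=\{0,2\}^n\cong\mathbb{F}_2^n$), observe that $2P_2$ depends only on the low bits of $P_2$, deduce that the endpoints of a $3$-AP share low bits, and reduce the AP equation to the $\mathbb{F}_2^n$-identity $x=y+w_1+w_3$. Your anticipated ``carry bookkeeping'' obstacle is harmless: since $\mathbb{Z}_4^n$ is a direct product there are no cross-coordinate carries, and within a single coordinate the identity $2y+2w_1+2w_3=2(y\oplus w_1\oplus w_3)$ in $\mathbb{Z}_4$ is immediate, so the reduction is cleaner than you fear.
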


\begin{proof}
Let $F=\{0,2\}^n\leq\mathbb{Z}_4^n$ and $R=\{0,1\}^n\subseteq \mathbb{Z}_4^n$. Every element $a\in\mathbb{Z}_4^n$ can be written as $a=f+r$ ($f\in F,r\in R$) in a unique way. Let $A\subseteq \mathbb{Z}_4^n$. Let us assign to every $x=2r\in F$ (where $r\in R$) a subset $A(x)\subseteq F$ in the following way: $A(x)=\{y\in F: r+2y\in A\}$. Three distinct elements $a_1=f_1+r_1,a_2=f_2+r_2,a_3=f_3+r_3$ (where $f_i\in F, r_i\in R$) form an arithmetic progression (in this order) if and only if $a_1+a_3=2a_2$, that is, if $f_1+f_3+r_1+r_3=2r_2$. As $f_1,f_3,2r_2\in F$, this implies $r_1=r_3$, so the condition gives $2r_2=2r_1+f_1+f_3$. Such elements exist if and only if for distinct $x=2r_1,y=2r_2\in F$ we have $y\in x+A(x)\hat{+}A(x)$ and $A(y)\ne \emptyset$. Note that $F\cong \mathbb{F}_2^n$, and this is equivalent with the condition that the system of subsets satisfies property $(*)$. Furthermore, $|A|=\sum |A(x)|$, so the maximal possible size of a 3AP-free subset of $\mathbb{Z}_4^n$ is equal to the maximal possible total size of a system of subsets $A(x)$ satisfying property $(*)$.
\end{proof}

\subsection{3AP-free sets: lower bound and subspace version}

In this subsection, first, as an illustration, we give an alternative -- different from the proof presented in Section~\ref{secproof} --  proof (using the subset reformulation) for Theorem~\ref{result1}, then we prove Theorem~\ref{subspace_thm}.

\begin{proof}[Alternative proof of Theorem~\ref{result1}.]
For $x\in \mathbb{F}_2^n$ let $\text{supp}(x)=\{i: x_i\ne 0\}$. Let us fix some $r\in \{0,1,\dots,n\}$. Let $A(x)=\{v: \text{supp}(v)\subseteq
\text{supp}(x) \}$ if $|\text{supp}(x)|=r$ and $A(x)=\emptyset$ otherwise. We claim that the system of subsets $A(x)$ satisfies $(*)$.
Indeed, if $y\in x+A(x)\hat{+}A(x)$, then $|\text{supp}(x)|=r$, thus $\text{supp}(y)\subsetneq \text{supp}(x)$ yields $A(y)=\emptyset$.

The total size of the subsets $A(x)$ is $\binom{n}{r}2^r$. The optimal choice is $r=\lceil 2n/3 \rceil$ yielding $r_3(\mathbb{Z}_4^n)\geq \binom{n}{r} 2^r \gg 3^n/\sqrt{n}$.

\end{proof}

\begin{proof}[Proof of Theorem~\ref{subspace_thm}.]
For $0\leq k\leq n$ let $X_k$ contain those $x$ for which $A(x)$ 
is a subspace of codimension $k$. If there is an $A(x)$ of codimension 0, that is, $A(x)=\mathbb{F}_2^n$, then all the other $A(y)$ sets are empty, thus the total size of the subsets is only $2^n$. From now on, we assume that each nonempty subset is a subspace of positive codimension.

Let us fix $k$.
For $x\in X_k$ let $x^{(1)},\dots,x^{(k)}$ be a basis for the orthogonal 
complement of $A(x)$, that is, 
$A(x)=\{ z: \forall\ 1\leq i\leq k:\ zx^{(i)}=0 \}$.

Let $\hat{x}=(x,1)\in\mathbb{F}_2^{n+1}$ and $\hat{x}^{(i)}=(x^{(i)},1+xx^{(i)})\in\mathbb{F}_2^{n+1}$. Now, for every $x\in X_k$ we have
$\hat{x}\hat{x}^{(i)}=1$. If $x\ne y\in X_k$, then $y\notin x+A(x)\hat{+}A(x)$, thus for some $1\leq i\leq k$ we have $(x+y)x^{(i)}=1$.
However, this implies that $(\hat{x}+\hat{y})\hat{x}^{(i)}=1$, that is, $\hat{y}\hat{x}^{(i)}=0$. Let $u(x)= \hat{x} \otimes \hat{x}
\otimes \dots \otimes \hat{x}\in (\mathbb{F}_2^{n+1})^{\otimes k}$ and $v(x)=\hat{x}^{(1)} \otimes \hat{x}^{(2)} \otimes \dots \otimes
\hat{x}^{(k)}\in (\mathbb{F}_2^{n+1})^{\otimes k}$. If $x,y\in X_k$, then $u(x)v(y)=\delta_{xy}$, so the vectors $(u(x),v(x))$ (with $x\in
X_k$) form a biorthogonal system of vectors, specially, the $u(x)$ vectors are linearly independent. However, all the $u(x)$ vectors lie in
a subspace of dimension $\sum\limits_{i=1}^k\binom{n+1}{i}$, thus $|X_k|\leq \sum\limits_{i=1}^k\binom{n+1}{i}$. Therefore, the total size
of the subsets $A(x)$ is at most $\sum\limits_{k=1}^n \sum\limits_{i=1}^k\binom{n+1}{i}2^{n-k} \leq 6\cdot 3^{n}$.

Now we use the tensor power trick to get rid of the factor 6. Let us assume that in $\mathbb{F}_2^n$ the system of subsets $A(x)$ satisfies
$(*)$ and all the non-empty subsets are subspaces. Let $S=\sum |A(x)|$. Now, we can define a system of subsets in $\mathbb{F}_2^{nt}$ as
follows. For $(x_1,x_2,\dots,x_t)\in \mathbb{F}_2^{nt}$ let $A((x_1,x_2,\dots,x_t))=A(x_1)\times A(x_2)\times \dots \times A(x_t)$. It is
easy to check that this system satisfies $(*)$, all the non-empty subsets are subspaces and the total size of the subspaces is $S^t$.
Therefore, $S^t\leq 6\cdot 3^{nt}$, thus $S\leq 6^{1/t}3^n$. This holds for every $t$, so the statement is proven. 

\end{proof}

\subsection{Reformulation for 4AP-free-ness}
Let us say that a system of subsets $A(x)\subseteq \mathbb{F}_2^n$ ($x\in\mathbb{F}_2^n$) satisfies property $(**)$, if 
the following implication holds:
$$
\forall x,y \in \mathbb{F}_2^n  \ (x+y\in (A(x)+A(x))\cap (A(y)+A(y)) \implies x=y) \eqno{(**)}
$$
(Note that for $A(x)=\emptyset$ we define $A(x)+A(x):=\emptyset$.)
Let $r_4'(n)$ denote the maximal possible size of $\sum \limits_{x\in\mathbb{F}_2^n} |A(x)|$, if the system of subsets $\{A(x):x\in \mathbb{F}_2^n\}$ satisfies $(**)$.

\begin{lemma}\label{4apeq}
For every $n\geq 1$ we have $r_4(\mathbb{Z}_4^n)=r_4'(n)$.
\end{lemma}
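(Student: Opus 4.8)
The plan is to mirror the bijective correspondence used in the proof of Lemma~\ref{3apeq}, adapted to the four-term situation. Again write $\mathbb{Z}_4^n$ in terms of its $\mathbb{F}_2^n$-substructure: set $F=\{0,2\}^n\leq\mathbb{Z}_4^n$ and $R=\{0,1\}^n$, so that every $a\in\mathbb{Z}_4^n$ has a unique decomposition $a=2y+r$ with $y\in F$ (thought of as a vector in $\mathbb{F}_2^n$) and $r\in R$. Given $A\subseteq\mathbb{Z}_4^n$, assign to each $x=2r\in F$ (with $r\in R$) the set $A(x)=\{y\in F: r+2y\in A\}$; this is a size-preserving bijection between subsets of $\mathbb{Z}_4^n$ and systems $\{A(x):x\in F\cong\mathbb{F}_2^n\}$, exactly as before, so $|A|=\sum_x|A(x)|$. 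It then suffices to show that $A$ is 4AP-free if and only if the associated system satisfies $(**)$.

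The core computation is to translate "$a_1,a_2,a_3,a_4$ are four distinct elements in arithmetic progression'' into a statement about the $A(x)$. Writing $a_i=2y_i+r_i$, the progression condition is equivalent to the two linked equations $a_1+a_3=2a_2$ and $a_2+a_4=2a_3$ (equivalently $a_1-2a_2+a_3=0$ and $a_2-2a_3+a_4=0$). As in the 3AP case, reducing the first equation modulo $F$ (i.e.\ looking at the $R$-part) forces $r_1=r_3$, and then it collapses to $2r_2 = 2r_1 + 2(y_1+y_3)$, i.e.\ in $\mathbb{F}_2^n$-language $x_2 - x_1 = y_1 + y_3$ with $y_1\hat{+}y_3$ — wait, more precisely $x_1+x_2 \in A(x_1)+A(x_1)$ where $x_1=2r_1$, $x_2=2r_2$. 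Similarly the second equation forces $r_2=r_4$ and becomes $x_2+x_3\in A(x_3)+A(x_3)$; but since $r_1=r_3$ we have $x_1=x_3$, so this reads $x_1+x_2\in A(x_1)+A(x_1)$ again — I must be careful here. Let me restate: the natural reading is that a 4AP corresponds to $r_1=r_3$, $r_2=r_4$, together with $2r_2 - 2r_1 = 2(y_1+y_3)$ and $2r_3-2r_2 = 2(y_2+y_4)$, i.e.\ with $x:=2r_1=2r_3$ and $x':=2r_2=2r_4$ one needs $x+x'\in (A(x)\,\hat{+}\,A(x))$... The precise bookkeeping of which sumset (restricted or full) appears, and of the distinctness of the four $a_i$, is the one place real care is required: distinctness of $a_1,a_3$ (same $r$, so need $y_1\neq y_3$) gives the restricted sumset $\hat{+}$ on one side, distinctness of $a_2,a_4$ gives it on the other, while the condition $(**)$ as stated uses the full sumset $+$ and instead encodes non-4AP-ness through the conclusion "$x=y$''. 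I expect that unwinding this shows: a proper 4AP exists iff there are $x\neq x'$ with $x+x'\in(A(x)+A(x))\cap(A(x')+A(x'))$ (the diagonal/repeated cases being absorbed because $a=a'$ forces the same $x$), which is precisely the negation of $(**)$.

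Thus the hard part is not conceptual but the careful case analysis of the eight-way split by which coordinates of the $a_i$ coincide — in particular verifying that allowing repeated elements in $A(x)+A(x)$ (rather than $\hat{+}$) does not wrongly flag a trivial progression as forbidden, and conversely that every genuine proper 4AP is detected. Once that correspondence is established, $|A|=\sum_x|A(x)|$ gives $r_4(\mathbb{Z}_4^n)=r_4'(n)$ immediately. I would present the eight-case bookkeeping compactly by first using the $R$-parts to pin down $r_1=r_3$ and $r_2=r_4$, reducing everything to the two $F$-equations, and only then discussing distinctness.
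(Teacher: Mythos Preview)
Your approach is the paper's: same $F,R$ decomposition, same linked equations $a_1+a_3=2a_2$ and $a_2+a_4=2a_3$ forcing $r_1=r_3$ and $r_2=r_4$, same translation into a condition on the $A(x)$. What is missing is not an idea but the realisation that the bookkeeping you dread is a one-liner, not an ``eight-way split''.

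With $x=2r_1=2r_3$ and $x'=2r_2=2r_4$ the two $F$-equations read $x+x'=f_1+f_3\in A(x)+A(x)$ and $x+x'=f_2+f_4\in A(x')+A(x')$. Distinctness of the four $a_i$ amounts to $x\neq x'$ together with $f_1\neq f_3$ and $f_2\neq f_4$. But in $\mathbb{F}_2^n$ one has $A+A=(A\,\hat{+}\,A)\cup\{0\}$ for nonempty $A$, and $x\neq x'$ forces $x+x'\neq 0$; hence $f_1\neq f_3$ and $f_2\neq f_4$ are automatic once $x\neq x'$, and for distinct $x,x'$ the conditions $x+x'\in A(x)+A(x)$ and $x+x'\in A(x)\,\hat{+}\,A(x)$ coincide. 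So the entire distinctness question collapses to the single condition $x\neq x'$, which is precisely the conclusion ``$\implies x=y$'' in $(**)$. State this observation and your outline becomes the paper's proof --- indeed slightly cleaner on this point than the paper itself, which passes silently from $\hat{+}$ to the unrestricted sumset in $(**)$.
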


\begin{proof}
Similarly to the proof of Lemma~\ref{3apeq} let us write every element $a\in\mathbb{Z}_4^n$ in the form $a=f+r$ (where $f\in F:=\{0,2\}^n,r\in R:=\{0,1\}^n$). Let $A\subseteq \mathbb{Z}_4^n$. Let us assign to every $x=2r\in F$ (where $r\in R$) a subset $A(x)\subseteq F$ in the following way: $A(x)=\{y\in F: r+2y\in A\}$. 

Now four distinct elements $a_1=f_1+r_1,a_2=f_2+r_2,a_3=f_3+r_3,a_4=f_4+r_4$ (where $f_i\in F, r_i\in R)$ form an arithmetic progression (in this order) if and only if $a_1+a_3=2a_2$ and $a_2+a_4=2a_3$, that is, if $f_1+f_3+r_1+r_3=2r_2$ and $f_2+f_4+r_2+r_4=2r_3$. This implies $r_1=r_3$ and $r_2=r_4$, so the condition gives $2r_2=2r_1+f_1+f_3$ and $2r_1=2r_2+f_2+f_4$. Such elements exist if and only if for distinct elements $x=2r_1,y=2r_2\in F$  we have $y\in x+A(x)\hat{+}A(x)$ and $x\in y+A(y)\hat{+}A(y)$. Note that $F\cong \mathbb{F}_2^n$. Hence, $A$ is 4AP-free if and only if the system of subsets $\{A(x):x\in F\}$ satisfies property $(**)$.

Furthermore, $|A|=\sum |A(x)|$, so the maximal possible size of a progression-free subset of $\mathbb{Z}_4^n$ is the same as the maximal possible total size of a family of subsets $A(x)$ satisfying property $(**)$.
\end{proof}

\section{Construction, Proof of Theorem \ref{const} }\label{sec:const}

\begin{proof}[Proof of Theorem \ref{const}]
According to Lemma~\ref{3apeq} it suffices to show that $r_3'(n)\geq \sum\limits_{i=t+1}^n \binom{n}{i}C(i,i-t)$ for every $0\leq t\leq n$. That is, our aim is to find a system of subsets $A(x)\subseteq \mathbb{F}_2^n$ ($x\in \mathbb{F}_2^n$) satisfying $(*)$ and having total size $\sum |A(x)|=\sum\limits_{i=t+1}^n \binom{n}{i}C(i,i-t)$.

Let $A(x)=\emptyset$ if and only if the Hamming-weight of $x\in \mathbb{F}_2^n$ is at most $t$, that is:
$$\{x\in\mathbb{F}_2^n: A(x)=\emptyset\}=\{x\in\mathbb{F}_2^n: |x|\leq t\}=:T.$$
To guarantee the requirement $(*)$, for every $y\in \mathbb{F}_2^n\setminus T$ we have to choose the subset $A(y)$ in such a way that $y+A(y)\hat{+}A(y)\subseteq T$. Let us assume that the Hamming-weight of $y$ is $i$ (where $i\in [t+1,n]$). Let $I(y)=\{j: y_j=1\}$ be the support of $y$. Let us consider the $i$-dimensional subspace $V(y)=\{z\in \mathbb{F}_2^n: z_j=0\text{ for every }j\notin I(y) \}$. According to the definition of $C(i,i-t)$ there exists a code with minimal distance at least $i-t$ and size $C(i,i-t)$ in $V(y)$. Let $A(y)$ be the set of the codevectors (having 0 coordinates for every $j\notin I(y)$) of this code: $A(y)\subseteq V(y)\subseteq \mathbb{F}_2^n$.
If $z_1$ and $z_2$ are distinct elements of $A(y)$, then the support of each of them is a subset of $I(y)$: $I(z_1),I(z_2)\subseteq I(y)$. Furthermore, the Hamming-weight of their sum $z_1+z_2$ is at least $i-t$, since the minimum distance of the code is at least $i-t$. Therefore, the Hamming-weight of $y+z_1+z_2$ is at most $i-(i-t)=t$, which implies that $y+z_1+z_2\in T$, as needed. Hence, the system of subsets defined this way satisfies $(*)$. Also, the total size of the subsets $A(y)$ is $\sum\limits_{i=t+1}^n \binom{n}{i}C(i,i-t)$, as required.
\end{proof}

\section{3AP-free subsets of $\mathbb{Z}_4^n$, if $n \leq 4$}\label{sec3ap1}

Now, we are ready to prove Theorem~\ref{3APthm}. In this section we give a proof for $n\leq 4$,  the case $n=5$ is covered in the next section.

Before starting the proof we give a brief outline of the main strategy. If we take a look at condition $(*)$ or $(**)$, then heuristically it seems to be a good idea to use sets with small doubling, since $(*)$ and $(**)$ seem to be less restrictive for  sets with a small doubling. Subspaces have a small doubling, and working with them is easier, an important step will be to show that it can be assumed (up to $n\leq 5$) that in a maximal configuration all the (non-empty) subsets are subspaces. To arrive at this all-subspace state, we can use arguments of the following type. If $A(x)+A(x)\supseteq V$ for a large subspace $V$ (where ``large'' means that  $|V|\geq |A(x)|$), then we can replace $A(x)$ by $V$, since $(*)$ (or $(**)$) remains true (that is, the corresponding subset is still 3AP/4AP-free) and the total size of the subsets is larger (not smaller). So the general plan is to replace the subsets with subspaces, and then solve the subspace version of the problem. If the dimension is small, then for almost all subsets $A(x)$ we can do this reduction step easily, there are just a few cases, when $A(x)+A(x)$ does not contain a sufficiently large subspace. However, even in these exceptional cases $A(x)+A(x)$ turns out to be too large, so these cases can be excluded, as well. As the dimension increases, both the reduction step and both handling the all-subspace problem is getting more difficult. The 5-dimensional case is considerably more difficult than the previous cases, the proof of it is presented in the next section. Now, we continue with the proof of the cases $1\leq n\leq 4$.

\bigskip

\begin{proof}[Proof of Theorem~\ref{3APthm} in the cases $n\leq 4$.]

According to Lemma~\ref{3apeq} and Corollary~\ref{gen_lower} it suffices to show that $r_3'(1)\leq2,r_3'(2)\leq6,r_3'(3)\leq16,r_3'(4)\leq42$.
\bigskip

{\bf Case 1: $n=1$.}
If the dimension is 1, then it is trivial that every 2-element subset of $\mathbb{Z}_4$ is 3AP-free and any three elements form a 3AP, so $r_3(\mathbb{Z}_4)=2$.


We continue with some general observations that are going to be used when the dimension is at least 2. Let us take a system of subsets $A(x)(\subseteq\mathbb{F}_2^n)$ (indexed by elements $x\in \mathbb{F}_2^n$) satisfying $(*)$. For brevity let $S=\sum\limits_{x\in \mathbb{F}_2^n} |A(x)|$.

{\bf Observation 1.} If $2^{n-1}<|A(x)|$ for some $x\in\mathbb{F}_2^n$, then by the pigeon-hole principle $x+A(x)+ A(x)=\mathbb{F}_2^n$. Since, for every $y\in\mathbb{F}_2^n$ we have $(x+A(x))\cap (y+A(x))\ne \emptyset$, so, for some $a_1,a_2\in A(x)$ we have $x+a_1=y+a_2$, that is, $y=x+a_1+a_2\in x+A(x)+A(x)$. Therefore, $x+A(x)\hat{+}A(x)=\mathbb{F}_2^n\setminus \{x\}$, so all the subsets are empty except $A(x)$, thus $S=|A(x)|\leq 2^n$. Hence, in this case the statement holds.

From now on, let us assume that $|A(x)|\leq 2^{n-1}$ for every $x$.

{\bf Observation 2.}
Let $A(x)$ be a nonempty subset: $0<|A(x)|\leq 2^{n-1}$.
It can be assumed that $0\in A(x)$, since changing $A(x)$ to a translate of itself, $A(x)+c$, preserves $A(x)+A(x)$.

{\bf Observation 3.} If $|A(x)|\in\{1,2\}$, then $A(x)$ is automatically a subspace, as $0\in A(x)$.
If $|A(x)|\in\{3,4\}$, let $u$ and $v$ be two different nonzero elements of $A(x)$, that is, $A(x)\supseteq \{0,u,v\}$. Clearly, for $A'(x)=\langle u,v \rangle$ we have $A(x)\hat{+} A(x) \supseteq A'(x)\hat{+}A'(x)$, so we may replace $A(x)$ by the 2-dimensional linear subspace $A'(x)$. This way $(*)$ is still satisfied, and either $S$ does not change or it increases by 1.

Now we consider the cases $n=2,3,4$ one by one.

\bigskip

{\bf Case 2: $n=2$.}

Now, we continue with the case when the dimension is 2. If none of the subsets is empty, then all of them can have size at most 1, thus $S\leq 4$. Otherwise, by Observation 1 we can assume that every nonempty subset has size at most 2,
thus $S\leq 6$, since there must be an empty set. 


\bigskip

{\bf Case 3: $n=3$.}

If the dimension is 3, then let $e_1,e_2,e_3$ be a basis for $\mathbb{F}_2^3$.

According to Observations 1-3 we can assume that all subsets have size at most 4 and every nonempty subset is a subspace (of dimension at most 2).

Let $k$ denote the number of 2-subspaces and $l$ the number of empty sets. If $k=0$, then $S\leq 2\cdot 8=16$, and we are done. Note that in fact $S<16$, since either all subsets have size at most 1 or at least one of them is empty.

So we can assume that $k>0$. If $A(x)=\langle u,v\rangle$ a 2-subspace, then $A(x+u),A(x+v),A(x+u+v)$ are all empty, that is, we can assign an ``empty triple'' $\{x+u,x+v,x+u+v\}$ to each 2-subspace. To different 2-subspaces we assign different triples, as the sum of the elements in the triple is $x$. 
That is, $k\leq \binom{l}{3}$. We have $S\leq 4k+2(8-k-l)=16+2k-2l\leq 16+2\binom{l}{3}-2l\leq 16$, if $l\leq 4$, equality holds if and only if $l=4$. 
If $5\leq l$, then $S\leq 3\cdot 4= 12$. Therefore, $S\leq 16$ is shown and the maximum occurs when $k=l=4$.



We continue with the 4-dimensional case.

\bigskip

{\bf Case 4: $n=4$.}

We will show that if the system of subsets  $\{ A(x)\subseteq \mathbb{F}_2^4\ |\ x\in\mathbb{F}_2^4 \}$ satisfies $(*)$, then $\sum\limits_{x\in\mathbb{F}_2^4} |A(x)|\leq 42$.

At first it is going to be shown that ``in most of the cases'' it can be assumed that all the nonempty $A(x)$ subsets are linear subspaces, then we will prove the statement for the special case when the non-empty $A(x)$ subsets are all linear subspaces and finally we will also cover the remaining cases. 

By Observations 1-3 we can assume that all subsets have size at most 8 and every nonempty subset of size at most 4 is a subspace (of dimension at most 2).

Let $5\leq |A(x)|\leq 8$. As $\dim \langle A(x) \rangle \geq 3$, we may choose three linearly independent vectors from $A(x)$. Let these be $f_1,f_2,f_3$ and let $A'(x)=\langle f_1,f_2,f_3\rangle$. As $0,f_1,f_2,f_3\in A(x)$, we have that $\{0,f_1,f_2,f_3,f_1+f_2,f_1+f_3,f_2+f_3\}\subseteq A(x)+A(x)$, that is, $A(x)+A(x)$ contains all the elements of the subspace $\langle f_1,f_2,f_3 \rangle$, possibly with the exception of $f_1+f_2+f_3$.

We claim that if there exists some $0\ne g\in  (A(x)\cap A'(x))\setminus \{f_1,f_2,f_3\}$, then $A(x)+ A(x)\supseteq \langle f_1,f_2,f_3 \rangle$. To see this, we only need to show that $f_1+f_2+f_3\in A(x)\hat{+} A(x)$. However, either $g=f_i+f_j$ (with some distinct $i,j\in \{1,2,3\}$) and $f_1+f_2+f_3=g+f_k$ (where $\{i,j,k\}=\{1,2,3\}$) or $g=f_1+f_2+f_3$ and $f_1+f_2+f_3=g+0$ is a good representation.
Therefore, in this case we can replace $A(x)$ by $\langle f_1,f_2,f_3 \rangle$. It remains to check the case when any four vectors in $A(x)\setminus \{0\}$ are linearly independent.

{\bf Step 1.} Assuming that $A(x)$ is not a subspace, and any four vectors in $A(x)\setminus \{0\}$ are linearly independent we prove $S<42$ under the additional assumption that at most two subsets have size 8.

 Without loss of generality it can be assumed that $\{0,f_1,f_2,f_3,f_4\}\subseteq A(x)$, where $f_1,f_2,f_3,f_4$ is a basis. The 3-subspaces spanned by three out of these basis vectors cover $\mathbb{F}_2^4$ with the exception of $f_1+f_2+f_3+f_4$. That is, if $|A(x)|\ne 5$, then $A(x)=\{0,f_1,f_2,f_3,f_4,f_1+f_2+f_3+f_4\}$, but in this case $A(x)\hat{+}A(x)\supseteq A'(x)\hat{+} A'(x)$ for $A'(x)=\langle f_1,f_2,f_3 \rangle$, so we can replace $A(x)$ by a larger set $A'(x)$. So, it suffices to check the case when $A(x)=\{0,f_1,f_2,f_3,f_4\}$. The system of subsets $\{A(y)\ |\ y\in \mathbb{F}_2^4\}$ can be replaced by a ``translate'' of itself: $\{A'(y)\ |\ y\in \mathbb{F}_2^4\}$ where $A'(y)=A(y+c)$ for some fixed $c\in \mathbb{F}_2^4$ (not depending on $y$). So by taking $c=x$ we may suppose that $A(0)=\{0,f_1,f_2,f_3,f_4\}$. Then $|0+A(0)\hat{+}A(0)|=10$, so at least 10 subsets are empty. The size of $A(0)$ is 5 and the size of the other five (possibly) nonempty subsets is at most 8. If at least two out of these five subsets have size at most 5, then $S\leq 5+5+5+3\cdot 8=39<42$. If this does not hold, then at least four of them are of size 8. We will cover this case later: indeed, it is going to be shown that if at least three subsets are of size 8, then $S<42$.

{\bf Step 2.}
From now on, we will assume that 
\begin{itemize}
\item either all the nonempty $A(x)$ sets are linear subspaces of dimension at most 3, or
\item some of the subsets are of size 5 but there are at least three subsets of size 8,
\end{itemize}
and show that $S\leq 42$ in these cases, too.

Let $h$ be the number of 3-subspaces. We distinguish 4 subcases.

\begin{itemize}
\item[Subcase 1 ($h=0$)]
In this case all of the subsets are of size at most 4. If $A(x)=\langle u,v\rangle$ is a 2-dimensional subspace for some $x$, then $A(x)\hat{+}A(x)=\{u,v,u+v\}$, thus $A(x+u),A(x+v)$ and $A(x+u+v)$ are all empty. So for each 2-subspace $A(x)$ we can assign an ``empty triple'', since the subsets assigned to the elements of $x+A(x)\hat{+}A(x)$ are all empty. Moreover, the triple $\{x+u,x+v,x+u+v\}$ determines $x$, since the sum of the vectors in the triple is $x$.  Let $k$ be the number of 2-subspaces and $l$ be the number of empty subsets (among the $A(x)$ sets). As empty triples can be assigned to the 2-subspaces by an injective mapping, we have $k\leq \binom{l}{3}$.\\
Hence, $S\leq 4k+2(16-k-l)=32+2k-2l\leq 32+2\binom{l}{3}-2l$. If $l\leq 4$, then this yields $S\leq 32$. Moreover, for $l=5$ we obtain that $S\leq 42$.  
If $l\geq 6$, then $S\leq 10\cdot 4=40$.\\
In all cases we obtained that $S\leq 42$.

\item[Subcase 2 ($h=1$)]
Let $|A(0)|=8$. As $|0+A(0)\hat{+}A(0)|=7$, at least 7 subsets are empty and consequently $S\leq 8+(16-1-7)\cdot 4=40$.

\item[Subcase 3 ($h=2$)]
Let $A(u)$ and $A(v)$ be the two 3-subspaces. Then $U=u+A(u)+A(u)$ and $V=v+A(v)+A(v)$ are 3-dimensional affine subspaces. If $U\cap V=\emptyset$, then $U\cup V=\mathbb{F}_2^4$ and $A(x)=\emptyset$ for all $x\notin\{ u,v\}$, so $S\leq 2\cdot 8=16$. Otherwise, $U\cap V$ is a 2-dimensional affine subspace, so $|(U\cup V)\setminus \{u,v\}|=(16-4)-2=10$, that is, at least 10 subsets are empty. Then $S\leq 2\cdot 8+4\cdot 4=32$.

\item[Subcase 4 ($h\geq 3$)]
Finally, let us assume that $A(u),A(v),A(w)$ are 3-subspaces. Note that in this case it can happen that some of the nonempty subsets  are not subspaces (these sets have size 5 and contain 5 affine independent vectors). According to Subcase 3, at least 10 subsets are empty. If at least 11 subsets are empty, then $S\leq 5\cdot 8=40$, and we are done. So it can be assumed that exactly 10 subsets are empty. Let $U=u+A(u)+A(u),V=v+A(v)+A(v),W=w+A(w)+A(w)$. Since there are only 10 empty subsets, from the argument of Subcase 3 it follows that these are exactly the 10 subsets $A(x)$ which are assigned to the 10 elements $x\in (U\cup V)\setminus\{u,v\}$.     
However, $U, V, U\cap V$ are all affine subspaces, so the sum of the vectors in $U$ adds up to 0 and the same holds for $V$ and $U\cap V$. Thus the sum of the vectors in $U\cup V$ is also 0. Hence, the sum of all vectors to which the empty set is assigned is $u+v$. However, we can repeat this argument with $U$ and $W$ and get that the sum is also equal to $u+w$, which is a contradiction. We are done. 

\end{itemize}

\end{proof}

\begin{proof}[Proof of Theorem~\ref{unique}]
We are going to use the implications of the previous proof.

When $n=2$, one of the sets must be empty and all other sets must have size 2 in order to get 6 elements. If, say, $A(x_0)=\emptyset$, then for any $x\ne x_0$ the set $A(x)$ must contain two elements whose difference is $x$. Two such configurations always can be mapped to each other in the required way.

When $n=3$, then we need four empty sets and four 2-subspaces to get the total size of 16. Assume that $A(x_1)=A(x_2)=A(x_3)=A(x_4)=\emptyset$. We claim that $x_1,x_2,x_3,x_4$ are affine independent. Otherwise they form an affin 2-subspace, however, taking some $x\notin \{x_1,x_2,x_3,x_4\}$ the affine 2-subspace $x+A(x)+A(x)$ would have to contain exactly three of $x_1,x_2,x_3,x_4$ (and $x$ as the fourth element) which is impossible. Therefore, $x_1,x_2,x_3,x_4$ are affine independent, and by some affine linear transformation $\varphi$ these can be mapped to $0,e_1,e_2,e_3$, for simplicity. Now, we can assume that 0 is contained in every nonempty $A(x)$ (by suitable translations). Then it follows that $A(e_i+e_j)=\langle e_i,e_j\rangle$, for $1\leq i<j\leq 3$ and $A(e_1+e_2+e_3)=\langle e_1+e_2,e_2+e_3\rangle$.

Finally, let $n=3$. 
Note that $S=42$ can hold only in Subcase 1 when $k=10,l=5$.

From the proof it follows that $S=42$ is possible only if there are exactly five empty sets, ten 2-subspaces and one 1-subspace. Moreover, if $u_1,u_2,u_3,u_4,u_5$ are the vectors to which the empty set is assigned, then the 3-term sums made out of these 5 vectors have to be all distinct. Clearly, by applying a suitable affine linear transformation $\varphi$ we can assume that $u_1=0$ and  $u_2,u_3,u_4$ are linearly independent. If $u_5\in\langle u_2,u_3,u_4\rangle$, then all the 10 triple sums lie in a 3-subspace, so they can not be all distinct. Thus $u_2,u_3,u_4,u_5$ are linearly independent. Therefore, by renaming  $u_1,\dots,u_5$ (if necessary), let $A(0)=A(e_1)=A(e_2)=A(e_3)=A(e_4)=\emptyset$, where $e_1,e_2,e_3,e_4$ is a basis. The set $A(e_1+e_2+e_3+e_4)$ can not be a 2-subspace, since all vectors in it must have Hamming-weight at least 3 to satisfy $e_1+e_2+e_3+e_4+A(e_1+e_2+e_3+e_4)\hat{+}A(e_1+e_2+e_3+e_4)\subseteq\{0,e_1,e_2,e_3,e_4\}$. So it is the unique 1-subspace, for instance $A(e_1+e_2+e_3+e_4)=\langle e_1+e_2+e_3+e_4 \rangle$ is an appropriate choice, but $\langle e_i+e_j+e_k\rangle$ is also fine with any 3-subset $\{i,j,k\}$ of $\{1,2,3,4\}$.  
By permuting $0,e_1,e_2,e_3,e_4$ with a suitable affine linear transformation we might assume that $A(e_1+e_2+e_3+e_4)=\langle e_1+e_2+e_3+e_4 \rangle$.

The remaining 10 sets need to be 2-subspaces. For $A(e_i+e_j)$ the unique appropriate choice is $A(e_i+e_j)=\langle e_i,e_j\rangle$, with this choice $e_i+e_j+A(e_i+e_j)\hat{+}A(e_i+e_j)=\{0,e_i,e_j\}$ holds. For $A(e_i+e_j+e_k)$ the unique appropriate choice is $A(e_i+e_j+e_k)=\langle e_i+e_j,e_i+e_k \rangle =\{0,e_i+e_j,e_j+e_k,e_k+e_i\}$, with this choice $e_i+e_j+e_k+A(e_i+e_j+e_k)\hat{+}A(e_i+e_j+e_k)=\{e_i,e_j,e_k\}$ is satisfied. 

\end{proof}


\section{Proof of $r_3(\mathbb{Z}^5_4)=124$}
\label{sec3ap2}

We will show that if the system of subsets  $\{ A(x)\subseteq \mathbb{F}_2^5\ |\ x\in\mathbb{F}_2^5 \}$ satisfies $(*)$, then $S:=\sum\limits_{x\in\mathbb{F}_2^5} |A(x)|\leq 124$.

Again, by Observations 1-3 we can assume that all subsets have size at most 16 and every nonempty subset of size at most 4 is a subspace (of dimension at most 2).



Now, let us assume that $8<|A(x)|\leq 16$. The set $A(x)$ must contain at least 4 linearly independent vectors. (Note that by Observation 2 we have $0\in A(x)$.)

\smallskip
{\bf Step 1.} First, let us assume that a set $A(x)$ with size $8<|A(x)|\leq 16$ spans a 4-dimensional subspace. Our aim is to show it can be assumed that $A(x)$ itself is a 4-subspace.
\smallskip

Let $f_1,f_2,f_3,f_4\in A(x)$ be linearly independent. Then $A(x)\hat{+}A(x)$ contains all the pairwise sums $f_i+f_j$. If $f_1+f_2+f_3+f_4$ also lies in $A(x)$, then $A(x)+A(x)=\langle f_1,f_2,f_3,f_4\rangle$, since the 3-term sums like $f_1+f_2+f_3$ can be obtained as $(f_1+f_2+f_3+f_4)+f_4=f_1+f_2+f_3$ and $f_1+f_2+f_3+f_4=(f_1+f_2+f_3+f_4)+0\in A(x)\hat{+}A(x)$. Hence, if $f_1+f_2+f_3+f_4\in A(x)$, then $A(x)$ can be replaced by $A'(x)=\langle f_1,f_2,f_3,f_4\rangle$.

Now, let us assume that $f_1+f_2+f_3+f_4\notin A(x)$.
Let us call the 2-term sums  $f_i+f_j$ (with $i\ne j$) {\it pairs} and the 3-term sums $f_i+f_j+f_k$ (with $i,j,k$ distinct) {\it triples}. The pair $f_i+f_j$ can be identified with the set of indices $\{i,j\}$, let us call this subset $\{i,j\}\subseteq\{1,2,3,4\}$ also a {\it pair}, and similarly the 3-element subset $\{i,j,k\}$ will be called a {\it triple} corresponding to the vector $f_i+f_j+f_k$.
 As the size of $A(x)$ is at least 9, the set $A(x)$ must contain at least $(9-4-1=)4$ elements among the six pairs and four triples.

As a first observation we check that in all of the following cases the equality $A(x)+A(x)=\langle f_1,f_2,f_3,f_4\rangle$ holds:
\begin{itemize}
\item[(i)] $A(x)$ contains two disjoint pairs: for instance $f_1+f_2,f_3+f_4\in A(x)$,
\item[(ii)] $A(x)$ contains a pair and a triple such that their intersection has size 1: for instance: $f_1+f_2$ and $f_2+f_3+f_4$,
\item[(iii)] $A(x)$ contains all the 3-term sums.
\end{itemize}

In case (i) we have $f_1+f_2+f_3+f_4=(f_1+f_2)+(f_3+f_4)$ and each triple contains either $\{1,2\}$ or $\{3,4\}$, thus they can be expressed like $f_1+f_2+f_3=(f_1+f_2)+f_3$.

In case (ii) we have $f_1+f_2+f_3+f_4=f_1+(f_2+f_3+f_4)$, the triples $\{1,2,3\}$ and $\{1,2,4\}$ can be obtained like $f_1+f_2+f_3=(f_1+f_2)+f_3$, furthermore, $f_2+f_3+f_4=0+(f_2+f_3+f_4)$ and $f_1+f_3+f_4=(f_1+f_2)+(f_2+f_3+f_4)$, as all $f_1,f_2,f_3,f_4\in A(x)$.

In case (iii) all the triples can be written like $f_1+f_2+f_3=(f_1+f_2+f_3)+0$ and $f_1+f_2+f_3+f_4=(f_1+f_2+f_3)+f_4$.


Now let us assume that none of (i-iii) holds. Since we need at least four more vectors, at least one triple is contained in $A(x)$, by symmetry we shall assume that $f_1+f_2+f_3\in A(x)$. Then $f_1+f_2+f_3+f_4=(f_1+f_2+f_3)+f_4\in A(x)\hat{+}A(x)$.
Also, $A(x)$ must contain at least one pair. This pair must be a subset of $\{1,2,3\}$, otherwise it would intersect $\{1,2,3\}$ in a single element, contradicting that (ii) does not hold. We can assume that $f_1+f_2\in A(x)$. Note that $f_1+f_3+f_4$ and $f_2+f_3+f_4$ are not in  $A(x)$, since (ii) does not hold. Now, $f_1+f_2+f_3=(f_1+f_2+f_3)+0\in A(x)\hat{+}A(x)$ and $f_1+f_2+f_4=(f_1+f_2)+f_4\in A(x)\hat{+}A(x)$. We claim that $f_1+f_3+f_4$ and $f_2+f_3+f_4$ also lie in $A(x)\hat{+}A(x)$. 
Since (ii) does not hold, the pairs $f_1+f_4,f_2+f_4,f_3+f_4$ are not in $A(x)$. Therefore, to get at least 9 elements we have to take at least two vectors from $\{f_1+f_3,f_2+f_3,f_1+f_2+f_4\}$. The triple $f_1+f_2+f_4$ can not be taken, since it intersects the two pairs, $f_1+f_3$ and $f_2+f_3$, in a single element. Now, $f_1+f_3,f_2+f_3\in A(x)$ implies that $f_1+f_3+f_4,f_2+f_3+f_4\in A(x)\hat{+}A(x)$, and we are done.

Thus in all cases we get $A(x)+A(x)=\langle f_1,f_2,f_3,f_4 \rangle$. Hence, if $A(x)$ is a set of size at least 9 (and at most 16) such that $A(x)$ is not a 4-subspace, then we can assume that  $\dim \langle A(x) \rangle =5$.


\smallskip
{\bf Step 2.} We show that it can be assumed that there is no subset for which $8<|A(x)|\leq 16$ and $\dim \langle A(x) \rangle =5$. Our aim is to show that $A(x)$ can be replaced by a 4-subspace. Together with Step 1 this implies that we can assume that all sets having size larger than 8 are 4-subspaces. Moreover, we show that there can be at most one such subset.
\smallskip

Our aim is to show that either there is a 4-subspace $A'(x)$ such that $A'(x)\subseteq A(x)+A(x)$ or the total size $S$ of the sets is at most 124.

Let us assume that $0,f_1,f_2,f_3,f_4,f_5\in A(x)$, where $f_1,\dots,f_5$ is a basis.
Then all singletons $f_i$ and pairs $f_i+f_j$ lie in $A(x)\hat{+}A(x)$.
If a 4-term sum, like $f_1+f_2+f_3+f_4$ lies in $A(x)$, then $A(x)+A(x)$ contains $\langle f_1,f_2,f_3,f_4\rangle$ and we are done: $A(x)$ can be replaced by $\langle f_1,f_2,f_3,f_4\rangle$. More generally we can formulate the following observation:

{\bf Observation 4.} If it is possible to choose 6 vectors $w_1,\dots,w_6$ from $A(x)$ in such a way that they span a 4-dimensional affine subspace and their sum is 0, then $A(x)$ can be replaced by a 4-subspace, since translating $A(x)$ by $w_6$ and taking $f_1=w_1+w_6,f_2=w_2+w_6,\dots, f_4=w_4+w_6$ gives $w_5+w_6=f_1+f_2+f_3+f_4$, so this case can be handled in the same way as the previous case.

Therefore, $f_1+f_2+f_3+f_4+f_5\notin A(x)$, since $\{f_1,f_2,f_3,f_4,f_5,f_1+f_2+f_3+f_4+f_5\}$ adds up to 0.
Thus the remaining elements of $A(x)$ are all pairs and triples.
We claim that the following cases can be excluded with the help of Observation 4:
\begin{itemize}
\item[(i)] there are two disjoint pairs, e.g. $f_1+f_2,f_3+f_4\in A(x)$
\item[(ii)] there are two triples intersecting each other in a single element, e.g. $f_1+f_2+f_3,f_3+f_4+f_5\in A(x)$
\item[(iii)] there is a pair and a triple intersecting each other in a single element, e.g. $f_1+f_2,f_2+f_3+f_4\in A(x)$
\end{itemize}  

In case (i) $f_1+f_2+f_3+f_4+(f_1+f_2)+(f_3+f_4)=0$.

In case (ii) $(f_1+f_2+f_3)+(f_3+f_4+f_5)+f_1+f_2+f_4+f_5=0$.

In case (iii) $(f_1+f_2)+(f_2+f_3+f_4)+f_1+f_3+f_4+0=0$.

Finally, let us assume that (i-iii) do not hold. From (i) it follows that the pairs either form a star or a triangle.
If they form a triangle, let us assume that it is $f_1+f_2,f_2+f_3,f_1+f_3$. Since  $f_1+f_2+f_3\in A(x)$ would imply $\langle f_1,f_2,f_3,f_4\rangle\subseteq A(x)+A(x)$, we have $f_1+f_2+f_3\notin A(x)$. Furthermore, (iii) implies that none of the other triples is in $A(x)$. Hence, $A(x)=\{0,f_1,f_2,f_3,f_4,f_5,f_1+f_2,f_2+f_3,f_1+f_3\}$, we will refer to this as case (a). From now on, we assume that the pairs in $A(x)$ form a star.

If this star contains 4 vectors, e.g. $f_1+f_2,f_1+f_3,f_1+f_4,f_1+f_5\in A(x)$, then $A(x)$ can not contain any triples because of (iii). (Case (b).)

If this star contains 3 vectors, e.g. $f_1+f_2,f_1+f_3,f_1+f_4$, then $A(x)$ can not contain any triples because of (iii). (Case (c).)

If this star contains 2 vectors, e.g. $f_1+f_2,f_1+f_3$. At least one triple must lie in $A(x)$ and (iii) implies that this triple is $f_1+f_2+f_3$. (Case (d).)

If only one pair is in $A(x)$, e.g. $f_1+f_2\in A(x)$. There are at least two more vectors (thus triples) in $A(x)$. If one of them is $f_3+f_4+f_5$, then the other triple intersects the pair $\{1,2\}$ or the triple $\{3,4,5\}$ in one element, contradicting (ii) or (iii). Thus, by (iii) these two triples must contain $\{1,2\}$, which gives case (e).

If there are no pairs, then there are at least three triples. Any two of them have an intersection of size 2, giving case (f) or case (g).

We summarize this:
\begin{itemize}
\item[(a)] $A(x)=\{0,f_1,f_2,f_3,f_4,f_5,f_1+f_2,f_2+f_3,f_1+f_3\}$
\item[(b)] $A(x)=\{0,f_1,f_2,f_3,f_4,f_5,f_1+f_2,f_1+f_3,f_1+f_4,f_1+f_5\}$
\item[(c)] $A(x)=\{0,f_1,f_2,f_3,f_4,f_5,f_1+f_2,f_1+f_3,f_1+f_4\}$
\item[(d)] $A(x)=\{0,f_1,f_2,f_3,f_4,f_5,f_1+f_2,f_1+f_3,f_1+f_2+f_3\}$
\item[(e)] $A(x)=\{0,f_1,f_2,f_3,f_4,f_5,f_1+f_2,f_1+f_2+f_3,f_1+f_2+f_4\}$
\item[(f)] $A(x)=\{0,f_1,f_2,f_3,f_4,f_5,f_1+f_2+f_3,f_1+f_2+f_4,f_1+f_2+f_5\}$
\item[(g)] $A(x)=\{0,f_1,f_2,f_3,f_4,f_5,f_1+f_2+f_3,f_1+f_2+f_4,f_1+f_3+f_4\}$
\end{itemize}

Note that the size of $A(x)$ is 10 in case (b) and 9 in the remaining cases (a) and (c-g). Also, the size of $A(x)\hat{+}A(x)$ is 21 in cases (b), (c), (e), (f) and 22 in cases (a), (d), (g).

Let us assume that there is at least one subset $A(x)$ having size at least 9 and not being a 4-subspace. Then at least 21 subsets out of the 32 sets $A(y)$ are empty, so at most 11 subsets are non-empty. Let $k$ denote the number of 4-subspaces among the subsets $A(x)$. Then $S=\sum |A(y)|\leq 16k+10(11-k)=110+6k$. If $k\leq 2$, then this is at most 122. So let us assume that there are at least three 4-subspaces, namely, $A(y),A(z), A(u)$. Let $K=y+A(y), L=z+A(z), M=u+A(u)$, then $K,L,M$ are affine subspaces of dimension 4.

If  two of them are disjoint, for instance $K\cap L=\emptyset$, then $K\cup L=\mathbb{F}_2^5$, giving that $A(t)=\emptyset $ for every $t\notin \{y,z\}$, which is a contradiction. So any two of them intersect nontrivially each other, and then any pairwise intersection is a 3-dimensional affine subspace. As $y\notin L\cup M$, we have that $(K\cap L)\cap (K\cap M)\ne \emptyset$, since both of them is an 8-element subset of the 15-element set $K\setminus \{y\}$, hence $K\cap L\cap M\ne \emptyset$. Then $K\cap L\cap M$ has size 4 or 8. By inclusion-exclusion principle, in both cases $|K\cup L\cup M|=|K|+|L|+|M|-|K\cap L|-|K\cap M|-|L\cap M|+|K\cap L\cap M|\geq 28$, therefore, at least $28-3=25$ subsets are empty and $S\leq 7\cdot 16=112$.

Therefore, it can be assumed that all subsets having at least 9 elements are 4-subspaces, moreover there are at most 2 such subsets. If there are 2 such subsets $A(x)$ and $A(y)$, then $|A(x)\cup A(y)|=|A(x)|+|A(y)|-|A(x)\cap A(y)|\geq 16+16-8=24$, so at least $24-2=22$ subsets are empty and $S\leq 2\cdot 16 + 8\cdot 8=96$. Hence, it can be assumed that there is at most one 4-subspace.


\smallskip
{\bf Step 3.} Now we show that if $|A(x)|\in [5,8]$, then it can be assumed that $A(x)$ is either a 3-subspace or a set of 5 or 6 affine independent points.
\smallskip

Let us assume that $4<|A(x)|\leq 8$. If $\langle A(x)\rangle$ has dimension 3, then $A(x)$ can be replaced with this 3-subspace. If $\dim \langle A(x)\rangle=4$, then it can be assumed that $0,f_1,f_2,f_3,f_4\in A(x)$. If at least one more element is in $A(x)$, then $A(x)+A(x)$ contains a 3-subspace and we can replace $A(x)$ by this 3-subspace, otherwise  $A(x)=\{0,f_1,f_2,f_3,f_4\}$, we will refer to this case as case (A).

If $\dim \langle A(x)\rangle=5$, then it can be assumed that $0,f_1,f_2,f_3,f_4,f_5\in A(x)$. If at least one more element with Hamming-weight at most 4 is in $A(x)$, then $A(x)+A(x)$ contains a 3-subspace. If $f_1+f_2+f_3+f_4+f_5\in A(x)$, then $\langle f_1+f_2,f_2+f_3,f_3+f_4\rangle \subseteq A(x)+A(x)$, otherwise $A(x)=\{0,f_1,f_2,f_3,f_4,f_5\}$, we will refer to this case as case (B).

Hence, it can be assumed that if there is a subset $A(x)$ (with size in $[5,8]$) which is not a subspace, then it contains 5 or 6 affine independent points:

\begin{itemize}
\item[(A)] $A(x)=\{0,f_1,f_2,f_3,f_4\}$
\item[(B)] $A(x)=\{0,f_1,f_2,f_3,f_4,f_5\}$
\end{itemize}
Note that the size of $A(x)$ in these cases is either 5 or 6.


\smallskip
{\bf Step 4.} We show that it can be assumed that all subsets have size at most 8.
\smallskip

Note that we have already seen (in Step 2) that there can be at most one 4-subspace, so let us assume that there exists a (unique) 4-subspace $A(y)$. Then $|A(y)\hat{+}A(y)|=15$, so there are at least 15 empty subsets. All the other subsets are 3-subspaces or have size at most 6. If there is no 3-subspace, then $S\leq 16+16\cdot 6=112$, and we are done. Let $A(x)$ be a 3-subspace and $K=y+A(y),L=x+A(x)$. As $|K\cap L|\leq 4$, we have $|K\cup L|\geq 16+8-4=20$, so there are at least $20-2=18$ empty subsets, thus at most 14 non-empty ones implying $S\leq 16+13\cdot 8=120$, and we are done. Therefore, none of the subsets can be a 4-subspace, and consequently all the subsets have size at most 8.



\smallskip
{\bf Step 5.} We show that it can be assumed that all nonempty subsets are subspaces of dimension at most 3 or a set of 5 or 6 affine independent points. Furthermore, the number of empty sets among the $A(x)$ subsets is at most 16 and there exists a subset of size at least 5.
\smallskip

If $0<|A(x)|\leq 4$, then by Observations 1-3 it can be assumed that $A(x)$ is a subspace.

Now, we can assume that all the subsets have size at most 8 and all those non-empty subsets that are not subspaces are of type (A) or (B).

If there are at least 17 empty subsets, then $S\leq 8\cdot 15=120$, so it can be assumed that at most 16 subsets are empty.

If there is no subset with size larger than 2, then $S\leq 64$.
If there is no subset with size larger than 4, then there must be a subset with size 4 and there are at most 29 non-empty sets, so $S\leq 29\cdot 4=116$. So there is a subset of size at least five, this can be either of type (A) or (B) or a 3-subspace.

Now our aim is to show that we can assume that there is no subset of type (A) neither of type (B).

\smallskip
{\bf Step 6.} We show that there is no subset of type (B).
\smallskip

Let us assume that there is a subset of type (B). Without loss of generality this is $A(0)=\{0,e_1,e_2,e_3,e_4,e_5\}$. Then $A(0)\hat{+}A(0)=\{e_1,\dots,e_5,e_1+e_2,\dots,e_4+e_5\}=:T$, that is, $A(0)\hat{+}A(0)$ has size 15 and $A(e_i)=\emptyset,A(e_i+e_j)=\emptyset$ for every $i\ne j$. As $17\cdot 6=102=124-22$, at least 11 subsets are 3-subspaces. Let $A(x)$ be a 3-subspace and $K:=x+A(x)$. As $A(0)\ne\emptyset$, we have $0\notin K$. We claim that $K\setminus \{x\} \not \subseteq T$. 

For the sake of contradiction, let us assume the contrary. Let $U=(e_1+e_2+e_3+e_4+e_5)^\perp$ and $\overline{U}=\mathbb{F}_2^5\setminus U$. As $|T\cap \overline{U}|=5$, the set $K\cap \overline{U}$ can not be  a 3-subspace. If $K\cap \overline{U}$ is a 2-subspace, then without  loss of generality, $x=e_1+e_2+e_3$ and $K\cap \overline{U}=\{e_1+e_2+e_3,e_1,e_2,e_3\}$. However, none of the translates of this set is contained in $K\cap U$, thus  we must have $K\subseteq U$, which leads to a contradiction, as well.

Hence, there exists some $y\notin T$ such that $A(y)=\emptyset$, so the number of the empty subsets is at least 16. If the number of 3-subspaces is at most 14, then $S\leq 14\cdot 8+2\cdot 6=124$, and we are done. So we can suppose that the number of 3-subspaces is at least 15 and one subset has size 6. The set $A(e_1+e_2+e_3+e_4+e_5)$ is not a 3-subspace, since any affine 3-subspace containing $e_1+e_2+e_3+e_4+e_5$ contains at least 2 more elements that are not in $T$. Hence $A(e_1+e_2+e_3+e_4+e_5)$ is the 16th empty subset. Now, we claim that $A(e_1+e_2+e_3+e_4)$ is not a 3-subspace. This holds, since any affine 3-subspace containing $e_1+e_2+e_3+e_4$ has at least one more element outside of $T\cup \{e_1+e_2+e_3+e_4+e_5\}$.

Therefore, there is no subset of type (B).

\smallskip
{\bf Step 7.} We show that there is no subset of type (A).
\smallskip

Let us assume that there is a subset of type (A), it can be assumed that it is $A(0)=\{0,e_1,e_2,e_3,e_4\}$. Then $|A(0)|=5$ and $A(0)\hat{+}A(0)=\{e_1,\dots,e_4,e_1+e_2,\dots,e_3+e_4\}$ has size 10. That is, we already have 10 empty subsets. 

For brevity let us write $A(i_1i_2\dots i_l)$ for $A(e_{i_1}+e_{i_2}+\dots+e_{i_l})$ if $$\{i_1,i_2,\dots, i_l\}\subseteq \{1,2,3,4,5\}.$$
(E.g. $A(1)=A(e_1), A(123)=A(e_1+e_2+e_3)$, and so on.)

Let us assume first that the total size of the subsets 
$$A(123),A(124),A(134),A(234),A(1234)$$
is at most 32.

Consider the following 16 subsets:  $A(z+e_5)$ ($z\in\langle e_1,e_2,e_3,e_4 \rangle$). Let $k$ denote the number of 3-subspaces among these and $l$ the number of empty ones. 
If $S\geq 125$, then $\sum |A(z+e_5)|\geq 125-5-32=88$, thus $8k+5(16-k-l)\geq 88$, and then
\begin{equation}\label{klelso}
3k\geq 5l+8.
\end{equation}


If $A(z+e_5)$ is a 3-subspace, then $K_z=z+e_5+A(z+e_5)$ is an affine 3-subspace containing $z+e_5$. The 1-codimensional affine subspace $R=\{x:xe_5=1\}$ contains either all 8 elements of $K_z$ or 4 elements of $K_z$. In the first case we get 7 new empty subsets, so the total number of empty subsets is at least 17 and we are done: $S\leq 15\cdot 8=120$. So for every 3-subspace $K_z$ exactly 4 elements of $K_z$ lie in $R$. The sum of these 4 vectors is 0, so the sum of the three vectors in $(K_z\cap R)\setminus \{z+e_5\}$ is $z+e_5$. Hence, for every 3-subspace $K_z$ we get an ``empty triple'' of vectors from $R$, therefore,
\begin{equation}\label{klmasodik}
\binom{l}{3}\geq k.
\end{equation}
By \eqref{klelso} and \eqref{klmasodik} we obtain that $l(l-1)(l-2)/2\geq 5l+8$, which yields $l\geq 6$. Then \eqref{klelso} implies that $k\geq 13$, which is a contradiction, since $6+13>16=|R|$. 


Hence, it can be assumed that the total size of the sets $$A(123),A(124),A(134),A(234),A(1234)$$
is at least 33, on the other hand, it is clearly at most 40. 
It follows that none of them is empty and at least three of them are 3-subspaces, so we can assume that $A(123)$ is a 3-subspace. $A(123)\leq \langle e_1,e_2,e_3,e_4\rangle$ is not possible, since then $e_1+e_2+e_3+A(123)$ would contain $e_1+e_2+e_4$ or $e_1+e_3+e_4$ or $e_2+e_3+e_4$ or $e_1+e_2+e_3+e_4$. 
Since, if an affine 3-subspace of $\langle e_1,e_2,e_3,e_4\rangle$ contains $e_1+e_2+e_3$ but none of the other 4 vectors, then it is $\langle e_1,e_2,e_3\rangle$, however, $\langle e_1,e_2,e_3\rangle$ contains 0, as well, contradiction.

So $e_1+e_2+e_3+A(123)$ intersects nontrivially $R$, so $|(e_1+e_2+e_3+A(123))\cap R|=4$, thus at least 4 subsets (among subsets $A(x)$ with $x\in R$)  are empty: $l\geq 4$. Note that that the sum of the four corresponding vectors is 0.
Also, note that in this case (similarly to \eqref{klelso} in the previous case) we shall assume that
\begin{equation}\label{klelso2}
3k\geq 5l.
\end{equation}

Now \eqref{klelso2} yields that at least 7 such subsets are 3-subspaces: $k\geq 7$. 
Then \eqref{klmasodik} implies that  the number of empty ones is at least 5. Again, by \eqref{klelso2} we get $k\geq 9$. If $l=5$, then we have $\binom{5}{3}=10$ triples, but there is a 4-term zero-sum, so 4 triples can not be ``empty triples'', thus there is a 6th empty subset: $l\geq 6$, and by \eqref{klelso2} we obtain that $k\geq 10$. So $\sum |A(z+e_5)|=10\cdot 8=80$. 
 As $125-5-80=40$, all the sets $A(123),A(124),A(134),A(234),A(1234)$ must be 3-subspaces. If $v\in \{e_1+e_2+e_3,e_1+e_2+e_4,e_1+e_3+e_4,e_2+e_3+e_4,e_1+e_2+e_3+e_4\}$, then $v+A(v)$ intersects $R$ in 4 vectors whose sum is 0. It can be checked that this set of 4 vectors can not be the same for all the 5 possible $v$-s. (Otherwise $\mathbb{F}_2^5\setminus R$ would contain at least 15 vectors to which the empty set is assigned, however, there are only 10 such vectors.) So there must be at least two such 4-element sets. Their intersection has size at least 2, since we have only 6 vectors in $R$ to which the empty set is assigned, and also at most 2, since otherwise they would be the same. Let $A(z_1+e_5),\dots,A(z_6+e_5)$ be the empty ones, and let us assume that the two 4-zero-sum-sets are $\{z_{1},\dots,z_4\}$ and $\{z_3,\dots,z_6\}$. Then $z_1+z_2=z_3+z_4=z_5+z_6$. 20 triples can be chosen out of these 6 vectors, but  just 8 of them can be ``empty triples'', contradiction.


Therefore, we can assume that there is no subset of type (A), that is, all the nonempty subsets are subspaces of dimension at most 3. According to Step 5 there must be at least one 3-subspace among the subsets, as Steps 6-7 imply that all the sets of size at least 5 are 3-subspaces. 


\smallskip
{\bf Step 8.} We show that the number of empty subsets is at least 13.
\smallskip

Let $1\leq k$ be the number of 3-subspaces and $l$ the number of empty subsets. Let us colour the elements of $\mathbb{F}_2^5$: $x$ is coloured red if $A(x)=\emptyset$ and $x$ is coloured blue if $A(x)$ is a 3-subspace. (If $A(x)$ is a subspace of dimension at most 2, then $x$ is not coloured.)
Let $\tilde{A}(x)=x+A(x)$, specially, if $x$ is blue, then $\tilde{A}(x)$ is a 3-dimensional affine subspace containing $x$ and seven red vectors.

If $125\leq S$, then $125\leq 8k+(32-k-l)4$ which yields $l\leq k$. 
Now we are going to show that $l\geq 13$.
If $x$ is blue, then in $\tilde{A}(x)$ there are two kinds of triples: the 2-subspace spanned by them either contains $x$ or not. The number of 
triples in $\tilde{A}(x)\setminus\{x\}$ is 35 and 7 of these triples span a 2-subspace containing $x$. These triples are not contained in any other affine 3-subspace $\tilde{A}(y)$.

Furthermore, we claim that if $l<13$, then a triple can appear in at most two 3-subspaces. For the sake of contradiction, let us assume that a triple is contained in $K\cap L\cap M$, where $K=\tilde{A}(x), L=\tilde{A}(y),M=\tilde{A}(z)$ are 3-subspaces. Let $H$ be the 2-subspace spanned by this triple, then $H=K\cap L\cap M$ and $K\setminus H$, $L\setminus H$, $M\setminus H$ are disjoint, thus $|K\cup L\cup M|=16$. However, in $K\cup L\cup M$ all the vectors are red except $x,y,z$, hence $16-3=13\leq l$, contradiction.

Now, since each triple appears in at most two 3-subspaces, we obtain that
$$7l+\frac{28l}{2}\leq 7k+\frac{28k}{2}\leq \binom{l}{3},$$
thus
$$126\leq (l-1)(l-2),$$
implying that $l\geq 13$.

Therefore, $k\geq l\geq 13$, as we claimed.


\smallskip
{\bf Step 9.} We show that if $A(x),A(y),A(z)$ are 3-subspaces (with distinct $x,y,z$), then $A(x)\cap A(y)\cap A(z)$ is not an affine 2-subspace. 
\smallskip

Now, for the sake of contradiction, assume that there are three 3-subspaces, $A(x),A(y),A(z)$ whose intersection is an affine 2-subspace $L$. Without loss of generality we can assume that $L$ is a linear (2-)subspace. Note that $\tilde{A}(x)=L\cup(L+x),\tilde{A}(y)=L\cup(L+y),\tilde{A}(z)=L\cup(L+z)$.

Note that $\mathbb{F}_2^5$ can be partitioned into 8 translates of $L$.
Every affine 3-subspace contains the same number of vectors from those $L$-translates that has a nonempty intersection with it. That is, given a 2-subspace $L$, we can distinguish three types of affine 3-subspaces, we are going to say that a 3-subspace is of
\begin{itemize}
\item type-1, if it contains 1-1 vector from each $L$-translate,
\item type-2, if it contains 2-2 vectors from four $L$-translates (and none from the remaining four $L$-translates),
\item type-4, if it contains 4-4 vectors from two $L$-translates (and none from the remaining six $L$-translates).
\end{itemize}
In $M=L\cup (L+x)\cup (L+y)\cup (L+z)$ there are 13 red elements, namely, all the vectors except $x,y,z$. If $t\notin M$  is blue, then $\tilde{A}(t)$ is a 3-subspace of type-1, type-2 or type-4 which contains $t$ and 7 seven red vectors. 

If at least two $L$-translates do not contain any red vector, then the elements of these translates can not be blue, so $k\leq 11$, contradiction. Hence, there is at most one $L$-translate without any red vector. In particular, this means that $l\geq 16$, since there are 13 red vectors in $M$ and at least 3 red vectors outside of $M$.

Thus $k=l=16$. Let us assume that the red vectors outside of $M$ are $v_1,v_2,v_3$, these vectors must be in different $L$-translates. Let $L'=\{u_1,u_2,u_3,u_4\}$ be the unique $L$-translate not containing any red vector. If $v_1+v_2+v_3\in L'$, then at most one of the $A(u_i)$ sets can be a 3-subspace (namely, $A(v_1+v_2+v_3)$), contradiction. Now assume that $v_1+v_2+v_3\notin L'$. By symmetry we can assume that $v_1+v_2+v_3\notin L+x$ also holds. But then the union of the $\tilde{A}(u_i)=\langle u_i,v_1,v_2,v_3\rangle_{aff}$ sets (that are all affine 3-subspaces of type-1) cover $L+x$ and the (unique) $u_i$ for which $x\in \tilde{A}(u_i)$ can not be blue (since $x$ is not red). Hence, no three-wise intersection of 3-subspaces can be a 2-subspace.


\smallskip
{\bf Step 10.} Now we know that $13\leq l\leq k$ and no three-wise intersection of 3-subspaces is a 2-subspace. We finish the proof of the upper bound 124 by verifying the statement in these cases.
\smallskip

Let $N$ be the number of those pairs of 3-subspaces whose intersection is a 2-subspace. Then
\begin{equation}
35k\leq \binom{l}{3}+4N,
\end{equation}
since each of the $k$ 3-subspaces contain 35 empty triples. Hence, for $l<16$ we have $N>0$, that is, two of the 3-subspaces assigned to blue vectors intersect each other in a 2-subspace. In the following subcases we always take two such subsets first.

{\bf Subcase 1.}
If $l=13$, then we can assume that $L$ is a linear 2-subspace and $\tilde{A}(x)=L\cup(L+x),\tilde{A}(y)=L\cup(L+y)$ are 3-subspaces corresponding to blue vectors $x$ and $y$. At least 2 translates of $L$ does not contain any red vector, and in these translates there can not be any blue vectors, either. So the number of blue vectors is at most $32-13-8=11$, contradiction.


{\bf Subcase 2.}
If $l=14$, then again let $L$ be a linear 2-subspace and $\tilde{A}(x)=L\cup(L+x),\tilde{A}(y)=L\cup(L+y)$ be 3-subspaces corresponding to blue vectors $x$ and $y$. Note that in $L\cup (L+x)\cup (L+y)$ there are 10 red vectors. We have 4 more red vectors, say, $v_1,v_2,v_3,v_4$, which must lie in different $L$-translates. (Otherwise there would be two $L$-translates without any red vector, which would imply that the 8 vectors in these translates are not coloured, contradicting that that the number of non-coloured vectors is at most 4.)

Note that all the 3-subspaces assigned to some blue vector different from $x,y$ are of type-1 or type-2. To get a 3-subspace of type-2 we need to take 2-2 red vectors from $L,L+x,L+y$. Moreover, these pairs must determine parallel vectors in these three $L$-translates (that is, in each pair the sum of the two vectors is the same), so there are at most 6 such subspaces. A type-1 3-subspace must correspond to a (blue) vector from the last $L$-translate, so there are at most 4 such subspaces. Hence $k\leq 4+6+2=12$, contradiction.


{\bf Subcase 3.}
Let us assume that $l=15$. Again, we can assume that for some linear 2-subspace $L$ the sets $A(x)=L\cup(L+x),A(y)=L\cup(L+y)$ are two 3-subspaces. Let $L_4,\dots,L_8$ be the remaining five $L$-translates. They contain altogether 5 red  vectors. If at least two of them do not contain any red vector, then in these two $L$-translates there aren't any blue vectors either, so the number of blue vectors is at most 9, contradiction. So without the loss of generality it can be assumed that either (i) $L_4$ contains two red vectors and $L_5,L_6,L_7$ contain one-one red vector: $v_i$ in $L_i$ ($5\leq i\leq 7$) or (ii) $L_4,\dots,L_8$ contain one-one red vector: $v_i$ in $L_i$ ($4\leq i\leq 8$).

In case (i) let $\alpha,\beta$ be the two directions that are different from the direction determined by the two red vectors of $L_4$. That is, $\alpha$ and $\beta$ are those two nonzero elements of $L$ that are different from the sum of the two red vectors in $L_4$.
Let us consider the following 6 vectors in $L_5,L_6,L_7$: $v_i+\alpha,v_i+\beta$ (for $5\leq i\leq 7$). If such a vector is blue, then the corresponding 3-subspace is of type-2, moreover, $L_1,L_2,L_3$ contain one-one red pair of this 3-subspace, and in each pair the sum is the same, either $\alpha$ or $\beta$. There are only 4 such triples (of pairs of vectors) meaning that at least two of the vectors $v_i+\alpha,v_i+\beta$ ($5\leq i\leq 7$) are not blue. To get 15 blue vectors all vectors in $L_8$ must be blue (as there are at most 2 non-coloured vectors). Note that the corresponding 3-subspaces must be of type-1. If $v_5+v_6+v_7\in L_8$, then there can be at most one blue element in $L_8$ (namely $v_5+v_6+v_7$). If $v_5+v_6+v_7\notin L_8$, then by symmetry we can also assume that $v_5+v_6+v_7\notin L_2$. If $t\in L_8$ is blue, then the corresponding 3-subspace is $\tilde{A}(t)=\langle v_5,v_6,v_7,t \rangle_{aff}$, but these four 3-subspaces cover $L_2$, which contradicts that $L_2$ contains only 3 red vectors.

In case (ii) there are two 3-subspaces of type-4. To get a 3-subspace of type-2, we have to choose one-one red pair from $L_1,L_2,L_3$ in such a way that these pairs determine parallel directions. This can be done in 6 ways, and every affine 3-subspace is determined by 6 points of it, so there are at most six 3-subspaces of type-2. To get a 3-subspace of type-1 we have to choose a red vector from all but one of the $L$-translates. First assume that no four-element subset of $\{v_4,\dots,v_8\}$ is a 2-subspace. Then the (at least) four red vectors chosen to be in this 3-subspace from  $\{v_4,\dots,v_8\}$ determine uniquely a 3-subspace, so the number of 3-subspaces of type-1 is at most 5, thus $k\leq 2+6+5=13$, a contradiction. Now assume that a 4-element subset, say, $\{v_4,v_5,v_6,v_7\}$ forms a 2-subspace. Each 3-subspace of type-1 contains at least 3 elements of $\{v_4,v_5,v_6,v_7\}$, hence all of them contain all these four vectors. Then the blue vector is in $L_8\setminus \{v_8\}$, so there are at most 3 such subspaces, thus, $k\leq 2+6+3=11$, a contradiction.


{\bf Subcase 4.}
Finally, let us assume that $l=k=16$, that is, all vectors are either red or blue. First we show that there are two 3-subspaces  whose intersection is a 2-subspace. For the sake of contradiction, assume the contrary. Let $S_1,S_2,S_3,S_4$ be four 3-subspaces assigned to blue vectors. If every pairwise intersection has size less than 4 (that is, the intersection is either empty or has size 2), then 
\begin{equation}\label{s1234}
|S_1\cup S_2\cup S_3\cup S_4|\geq \sum|S_i|-\sum|S_i\cap S_j|\geq 4\cdot 8-6\cdot 2=20,
\end{equation}
 so $S_1\cup S_2\cup S_3\cup S_4$ contains at least $20-4=16$ red vectors. Since there are only 16 red vectors, we must have equality in \eqref{s1234}, so each pairwise intersection has size 2 and each triple-intersection has size 0. Clearly, these hold for any four 3-subspaces assigned to blue vectors. Pick such a 3-subspace, for instance, $S_1$. Then the other fifteen 3-subspaces have to intersect $S_1$ in pairwise disjoint pairs, which is impossible. Therefore, there are two 3-subspaces whose intersection is a 2-subspace.


Hence, we can assume that this 2-subspace is a linear 2-subspace $L$ and the sets $A(x)=L\cup(L+x),A(y)=L\cup(L+y)$ are two 3-subspaces corresponding to blue vectors $x$ and $y$. Let $L_4,\dots,L_8$ be the remaining five $L$-translates. These contain 6 more red vectors. As there can be at most one $L$-translate without any red vector, we can assume that the number of red vectors among them is i) 3-1-1-1-0 or  ii) 2-2-1-1-0 or iii) 2-1-1-1-1.

In case (i) let $v_5,v_6,v_7$ be the red vectors in $L_5,L_6,L_7$. If $v_5+v_6+v_7\in L_8$, then in $L_8$ there is at most one blue vector (namely, $v_5+v_6+v_7$), contradiction. Assume that $v_5+v_6+v_7\notin L_8$. We can assume that $v_5+v_6+v_7\notin L_2$. If $t\in L_8$ is blue, then $\tilde{A}(t)=\langle t,v_5,v_6,v_7\rangle_{aff}$, but these cover $L_2$, which contradicts that $L_2$ contains a blue element.

In case (ii) let us assume that the direction $0\ne\alpha\in L$ is different from the direction(s) determined by the pairs in $L_4,L_5$. Let $v_6\in L_6,v_7\in L_7$ be the red vectors in these translates. Consider the blue vectors $v_6+\alpha$ and $v_7+\alpha$. The 3-subspaces corresponding to them are of type-2, and both of them contain one-one pair from $L_1,L_2,L_3$, moreover, all these pairs determine direction $\alpha$. In $L_2$ and $L_3$ these pairs are uniquely determined. In $L_1$ there are two choices (two disjoint pairs). However, these two pairs in $L_1$ together with the pairs from $L_2$ and $L_3$ determine two pairs in the same $L$-translate, which contradicts the existence of such a pair in both $L_6$ and $L_7$.

Finally, we consider case (iii).
Let $l_1=0,l_2=e_3,l_3=e_4,l_4=e_3+e_4,l_5=e_5,l_6=e_3+e_5,l_7=e_4+e_5,l_8=e_3+e_4+e_5$ and $L=\langle e_1,e_2\rangle$. For every $1\leq i\leq 8$ let $L_i=L+l_i$. We can assume that $L_1$ contains 4 red vectors and $L_2,L_3$ contains 3-3 red vectors.

First assume that the $L$-translate containing 2 red vectors is $L_4$, we can assume that these vectors are $e_3+e_4$ and $e_1+e_3+e_4$. Let $t$ be a blue vector in one of the four $L$-translates $L_5,\ldots,L_8$. Then $\tilde{A}(t)$ is either of type-2 or type-1. However, only $L_1,L_2,L_3,L_4$ contain at least two red vectors, which means that any 3-subspace of type-2 must contain at least 6 vectors from $L_1\cup L_2\cup L_3\cup L_4$, which is a 4-subspace, thus the remaining two vectors of the 3-subspace must also lie in this subspace, too. So $\tilde{A}(t)$ is of type-1. As $L_5\cup L_6\cup L_7\cup L_8$ is an affine 4-subspace, it intersects $\tilde{A}(t)$ in an affine 2-subspace. Therefore, if, say, $t\in L_8$, then $t$ and the red vectors from $L_5,L_6,L_7$ form an affine 2-subspace, that is, $t$ is the sum of these three red vectors. But then in $L_8$ the only blue vector is $t$, contradiction.


Hence, $L_4$ contains one red vector. By symmetry, we can assume that $L_5$ contains 2 red vectors and these are $e_5$ and $e_1+e_5$. Let the red vector in $L_i$ be $l_i+t_i$ for $i\in\{4,6,7,8\}$.

Let $i\in\{6,7,8\}$. We claim that $\tilde{A}(l_i+t_i+e_2)$ and $\tilde{A}(l_i+t_i+e_1+e_2)$ must be of type-1. Otherwise, $\tilde{A}(l_i+t_i+e_2)$ or $\tilde{A}(l_i+t_i+e_1+e_2)$ would contain at least two vectors from $L_5\cup L_6\cup L_7\cup L_8$, so it would have to contain two more red vectors from one of the $L$-translates $L_5,L_6,L_7,L_8$, these could only be $e_5$ and $e_5+e_1$ from $L_5$. But then the blue vector $l_i+t_i+e_1$ would also lie in the 3-subspace (to get parallel pairs from the different translates), a contradiction.
Hence, $\tilde{A}(l_i+t_i+e_2)$ and $\tilde{A}(l_i+t_i+e_1+e_2)$ are of type-1. 


Consider  $\tilde{A}(l_6+t_6+e_2)$ and $\tilde{A}(l_6+t_6+e_1+e_2)$.
Each of these  two subspaces contain either $e_5$ or $e_5+e_1$ and they contain $l_7+t_7$ and $l_8+t_8$. As the intersection of $\tilde{A}(l_6+t_6+e_2)\cap \tilde{A}(l_6+t_6+e_1+e_2)$ with the 1-codimensional affine subspace $L_5\cup L_6 \cup L_7\cup L_8$ must be of size 2, they contain different elements from $L_5$. Without loss of generality we can assume that $\tilde{A}(l_6+t_6+e_2)$ contains $e_5$. Then $e_5+(l_6+t_6+e_2)+(l_7+t_7)+(l_8+t_8)=0$, that is, $t_6+t_7+t_8=e_2$. 
Now $\tilde{A}(l_6+t_6+e_2)$ and $\tilde{A}(l_6+t_6+e_2+e_1)$ are determined, since they must contain $l_4+t_4$:
\begin{multline*}\tilde{A}(l_6+t_6+e_2)=
\{l_1+t_4+t_8,l_2+t_4+t_6+t_8+e_2,l_3+t_4+t_7+t_8,l_4+t_4,l_5,\\
l_6+t_6+e_2,l_7+t_7,l_8+t_8\},
\end{multline*}
\begin{multline*}
\tilde{A}(l_6+t_6+e_2+e_1)=\{l_1+t_4+t_8+e_1,l_2+t_4+t_6+t_8+e_2+e_1,l_3+t_4+t_7+t_8,l_4+t_4,\\
l_5+e_1,l_6+t_6+e_2+e_1,l_7+t_7,l_8+t_8\}.
\end{multline*}
Similarly, the type-1 3-subspaces containing 2-2 blue vectors from $L_7$ and $L_8$ are: 
\begin{multline*}
\{l_1+t_4+t_8,l_2+t_4+t_6+t_8,l_3+t_4+t_7+t_8+e_2,l_4+t_4,\\
l_5,l_6+t_6,l_7+t_7+e_2,l_8+t_8\},\\
\{l_1+t_4+t_8,l_2+t_4+t_6+t_8,l_3+t_4+t_7+t_8+e_2+e_1,l_4+t_4,\\
l_5+e_1,l_6+t_6,l_7+t_7+e_2+e_1,l_8+t_8\},\\
\{l_1+t_4+t_8+e_2,l_2+t_4+t_6+t_8+e_2,l_3+t_4+t_7+t_8+e_2,l_4+t_4,\\
l_5,l_6+t_6,l_7+t_7,l_8+t_8+e_2\},\\
\{l_1+t_4+t_8+e_2+e_1,l_2+t_4+t_6+t_8+e_2+e_1,l_3+t_4+t_7+t_8+e_2+e_1,l_4+t_4,\\
l_5+e_1,l_6+t_6,l_7+t_7,l_8+t_8+e_2+e_1\}.
\end{multline*}
So the set of red vectors in $L_2$ is $\{l_2+t_4+t_6+t_8,l_2+t_4+t_6+t_8+e_2,l_2+t_4+t_6+t_8+e_2+e_1\}$ and in $L_3$ is $\{l_3+t_4+t_7+t_8,l_3+t_4+t_7+t_8+e_2,l_3+t_4+t_7+t_8+e_2+e_1\}$.

Now consider $l_8+t_8+e_1$ which is a blue vector in $L_8$. Note that $\tilde{A}(l_8+t_8+e_1)$ is of type-2 (otherwise the three 3-subspaces corresponding to blue vectors from $L_8$ would have a 2-subspace intersection, contradicting Step 9). Also, it must contain $l_8+t_8$. As it contains at least 2 vectors from the 1-codimensional affine subspace $L_5\cup L_6\cup L_7 \cup L_8$, it must contain two more, which can only be $l_5,l_5+e_1$. The remaining two red pairs are in two of $L_1,L_2,L_3$. As $L_8=L_5+(e_3+e_4)$, these two $L$-translates must be $L_2$ and $L_3$. Also, the difference of the vectors from the same $L$-translate must be $e_1$, so the 3-subspace is:
\begin{multline*}
\{l_2+t_4+t_6+t_8+e_2,l_2+t_4+t_6+t_8+e_2+e_1,l_3+t_4+t_7+t_8+e_2,l_3+t_4+t_7+t_8+e_2+e_1,\\l_5,l_5+e_1,l_8+t_8,l_8+t_8+e_1                 \}  .
\end{multline*}

As $\{l_2+t_4+t_6+t_8+e_2,l_2+t_4+t_6+t_8+e_2+e_1\}=\{l_5,l_5+e_1\}+e_5+e_3+t_4+t_6+t_8+e_2$, we get that
$\{l_8+t_8,l_8+t_8+e_1\}+e_5+e_3+t_4+t_6+t_8+e_2=\{l_3+t_4+t_6+e_2,l_3+t_4+t_6+e_2+e_1\}$ has to coincide with $\{l_3+t_4+t_7+t_8+e_2, l_3+t_4+t_7+t_8+e_2+e_1  \}$. However, this leads to $t_6+t_7+t_8\in \{0,e_1\}$, contradiction.

\section{4AP-free subsets of $\mathbb{Z}_4^n$}
\label{sec4ap}

\begin{proof}[Proof of Theorem \ref{thm:r_4(Z_4^n}]
According to Lemma~\ref{4apeq} it suffices to show that $r_4'(1)=3, r_4'(2)=10, r_4'(3)=36$ and $r_4'(4)=128$.
In other words, we will show that if the system of subsets  $\{ A(x)\subseteq \mathbb{F}_2^n\ |\ x\in\mathbb{F}_2^n \}$ satisfies $(**)$, then $S=\sum\limits_{x\in\mathbb{F}_2^n} |A(x)|$ is at most 3, 10, 36, 128 for $n=1,2,3,4$, respectively. Then, we will present constructions of these sizes.

By the pigeon-hole principle we get that $A(x)+A(x)=\mathbb{F}_2^n$ if $|A(x)|> 2^{n-1}$. Hence, $|A(x)|> 2^{n-1}$ holds for at most one $x$, since $x\ne y$ and $2^{n-1}< |A(x)|,|A(y)|$ would imply that $x+y\in (A(x)+A(x))\cap(A(y)+A(y))=\mathbb{F}_2^n$, contradicting $(**)$.

This observation immediately yields that $S\leq 2^n+(2^n-1)2^{n-1}=2^{2n-1}+2^{n-1}$. For $n=1,2,3$ we obtain the claimed upper bounds $3,10,36$, respectively.

For $n=4$ we obtain that $S\leq 136$, now we will show that $S\leq 128$ also holds. We have already seen (in the proof of Theorem~\ref{3APthm}) that it can be assumed that all the nonempty $A(x)$ subsets are linear subspaces or a set of 5 affine independent points. If all the subsets are of size at most 8, then clearly $S\leq 16\cdot 8=128$. So we can assume that one of them is $\mathbb{F}_2^4$, without loss of generality let $A(0)=\mathbb{F}_2^4$. It can be assumed that the number of 3-subspaces among the $A(x)$ sets is at least 13, since otherwise $S\leq 16+12\cdot 8+3\cdot 5=127$. If $A(x)$ is a 3-subspace, then for some (uniquely determined) $\varphi(x)\in\mathbb{F}_2^4$ we have $A(x)=(\varphi(x))^\perp$. As $x+0\notin (A(x)+A(x))\cap (A(0)+A(0))=A(x)$, we obtain that $x\varphi(x)=1$. We claim that $\varphi$ is injective, that is, if $A(x)$ and $A(y)$ are 3-subspaces (with $x\ne y$), then $\varphi(x)\ne \varphi (y)$. Otherwise, $(x+y)\varphi(x)=x\varphi(x)+y\varphi(y)=1+1=0$, so $x+y\in A(x)$ and similarly $x+y\in A(y)$. So this would lead to $x+y\in (A(x)+A(x))\cap (A(y)+A(y))$, which contradicts property $(**)$. Therefore, $\varphi$ is  injective. Also, if $A(x)$ and $A(y)$ are 3-subspaces (and $x\ne y$), then $x\varphi(y)=0$ or $y\varphi(x)=0$, since $x\varphi(y)=y\varphi(x)=1$ would imply that $(x+y)\varphi(x)=0=(x+y)\varphi(y)$ and so $x+y\in (A(x)+A(x))\cap (A(y)+A(y))$, which would contradict property $(**)$.

Now, let us assume that for some $z$ the set $A(z)$ is a set of 5 affine independent points. Let $X=\{x\in\mathbb{F}_2^4\ |\ x+z\in A(z)\hat{+}A(z)\}$. As $z=z+0\notin (A(z)\hat{+}A(z))\cap(A(0)\hat{+}A(0))=A(z)\hat{+}A(z)$, we have $X\subseteq \mathbb{F}_2^4\setminus \{0,z\}$. 
Note that $A(z)\hat{+}A(z)$ contains $\binom{5}{2}$ (distinct) sums, thus we have $|X|=10$. For all $x\in X$ we have $x+z\notin A(x)\hat{+}A(x)$. We know that at least 13 subsets are 3-subspaces, so there are at most three subsets that are not 3-subspaces: $A(0),A(z)$ and possibly one more. Thus for at least 9 elements of $X$ the set $A(x)$ is a 3-subspace. For such an $x$ the condition $x+z\notin A(x)\hat{+}A(x)$ implies that $1=(x+z)\varphi(x)=1+z\varphi(x)$, hence $z\varphi(x)=0$. As $\varphi$ is injective, this would mean that the 3-subspace $(z)^\perp$ contains at least 9 different vectors, which is a contradiction. Hence, it can be assumed that all the nonempty $A(x)$ sets are linear subspaces.

At least 14 of the $A(x)$ subsets are 3-subspaces, since otherwise $S\leq 16+13\cdot 8+2\cdot 4=128$ clearly holds, as $|A(x)|\leq 4$ for every $x\ne 0$ for which $A(x)$ is not a 3-subspace. Therefore, the mapping $\varphi$ is defined on $\mathbb{F}_2^4\setminus\{0\}$ with the exception of at most one point. Also, $\varphi$ is injective, so it can be extended to a bijective mapping from $\mathbb{F}_2^4\setminus\{0\}$ to $\mathbb{F}_2^4\setminus\{0\}$. 
Let $H=\{x: A(x)\text{ is a 3-subspace}\}$. Then either $H=\mathbb{F}_2^4\setminus \{0\}$ or $H=\mathbb{F}_2^4\setminus \{0,u\}$ for some $u$. Let
$$N:=|\{(x,y): x,y\in H,x\ne y,x\varphi(y)=0\}|.$$

At first assume that $H=\mathbb{F}_2^4\setminus \{0\}$. 
As at least one of $x\varphi(y)$ and $y\varphi(x)$ is equal to 0 for every $x\ne y$, we get that $N\geq \binom{15}{2}=105$. On the other hand $N\leq \sum\limits_{x\in H} (|x^\perp|-1)=15\cdot 7=105$. Therefore, $|N|=105$ and for any two distinct elements of $H$ exactly one of $x\varphi(y)$ and $y\varphi(x)$ is equal to 0. In other words, $x\varphi(y)+y\varphi(x)=1$ for any two different elements $x,y\in H$. 
Let $u(x)=(1,x,\varphi(x)),v(x)=(1,\varphi(x),x)\in\mathbb{F}_2^9$ for every $x\in H$. Then $u(x)v(y)=\delta_{xy}$, thus $\{u(x),v(x)\}_{x\in H}$ is a biorthogonal system, implying that $|H|\leq \dim \mathbb{F}_2^9= 9$, which is a contradiction.
  
Now assume that there is a subset $A(u)$ (with $u\ne 0$) which is not a 3-subspace: $H=\mathbb{F}_2^4\setminus \{0,u\}$. 
As at least one of $x\varphi(y)$ and $y\varphi(x)$ is equal to 0 for every $x\ne y$, we get that $N\geq \binom{14}{2}=91$. However, $N\leq \left(\sum\limits_{x\in H} (|x^\perp|-1) \right)-|u^\perp\cap H|\leq 14\cdot 7-6=92$. Hence, $N\in\{91,92\}$ and there is at most one pair of distinct elements $x,y\in H$ such that $x\varphi(y)=y\varphi(x)=0$.
By dropping out one of the two elements of this pair from $H$ (if such a pair exists at all) we obtain a 13-element subset $H'\subseteq H$ such that $x\varphi(y)+y\varphi(x)=1$ for every $x,y\in H'$, $x\ne y$.
Again, let $u(x)=(1,x,\varphi(x)),v(x)=(1,\varphi(x),x)\in\mathbb{F}_2^9$ for every $x\in H'$. Then $u(x)v(y)=\delta_{xy}$, thus $\{u(x),v(x)\}_{x\in H'}$ is a biorthogonal system, implying that $|H'|\leq 9$, which is a contradiction.

Hence, it is shown that $S\leq 128$.

Now we give constructions to prove the lower bounds.

\noindent
{\bf Case 1: $n=1$.}

$A(0)=\mathbb{F}_2,A(1)=\{0\}$ give $3\leq r_4'(1)$. (In fact, any 3-element subset of $\mathbb{Z}_4$ is free of arithmetic progressions of length 4, trivially.)

\noindent
{\bf Case 2: $n=2$.}

Let $A(0)=\mathbb{F}_2^2=\langle e_1,e_2\rangle$. Furthermore, let $\varphi(e_1)=e_1, \varphi(e_2)=e_1+e_2, \varphi(e_1+e_2)=e_2$. Then $x\varphi(x)=1$ for every $x\ne 0$ and $x\varphi(y)+y\varphi(x)=1$ for every $x,y\in \mathbb{F}_2^2\setminus\{0\}$, $x\ne y$. For $0\ne x$ let $A(x)=(\varphi(x))^\perp$.
Then $x+0\notin A(x)$, since $x\varphi(x)=1$. Also, for any two nonzero vectors $x$ and $y$ either 
$x\varphi(y)=0$ or $y\varphi(x)=0$. We can assume that $x\varphi(y)=0$. (Otherwise we swap $x$ and $y$.) Then $(x+y)\varphi(y)=0+1$ implies that $x+y\notin A(y)=A(y)+A(y)$, so the condition $(**)$ holds. Thus $10\leq r_4'(2)$.

\noindent
{\bf Case 3: $n=3$.}
Let $A(0)=\mathbb{F}_2^3=\langle e_1,e_2,e_3\rangle$. Similarly to the previous case it suffices to define a bijective mapping $\varphi:\mathbb{F}_2^3\setminus\{0\} \to \mathbb{F}_2^3\setminus\{0\}$ such that $x\varphi(x)=1$ for every $x\ne 0$ and $x\varphi(y)+y\varphi(x)=1$ for every $x\ne y$. It is easy to check that the following mapping satisfies these conditions:
$\varphi (e_1)=e_1, \varphi(e_2)=e_1+e_2,\varphi(e_3)=e_1+e_2+e_3,\varphi(e_1+e_2)=e_2+e_3,\varphi(e_1+e_3)=e_3, \varphi(e_2+e_3)=e_1+e_3,\varphi(e_1+e_2+e_3)=e_2$.
Hence, $8+7\cdot 4=36\leq r_4'(3)$.

\noindent
{\bf Case 4: $n=4$.}

Let $\mathbb{F}_2^4=\langle e_1,e_2,e_3,e_4 \rangle$. Let us extend the mapping $\varphi:\langle e_1,e_2,e_3\rangle \to \langle e_1,e_2,e_3\rangle $ defined in Case 3 with $\varphi(0)=0$. For every $x\in \langle e_1,e_2,e_3\rangle $ let $A(x)=A(x+e_4)=(\varphi(x)+e_4)^\perp$. Let $x,y\in \langle e_1,e_2,e_3 \rangle$ and $\alpha,\beta\in\{0,1\}$. We have to show that 
$$(x+\alpha e_4)+(y+\beta e_4)\notin A(x+\alpha e_4)\cap A(y+\beta e_4)$$
unless $x=y$ and $\alpha=\beta$. If $(x+\alpha e_4)+(y+\beta e_4)\in A(x+\alpha e_4)$, then $(x+y+(\alpha+\beta)e_4)(\varphi(x)+e_4)=0$, that is, $x\varphi(x)+y\varphi(x)+\alpha+\beta=0$. Similarly, $(x+\alpha e_4)+(y+\beta e_4)\in A(y+\beta e_4)$ implies that $y\varphi(y)+x\varphi(y)+\alpha+\beta=0$.

If $x=y$, then $0=x\varphi(x)+x\varphi(x)+\alpha+\beta$ yields $\alpha=\beta$, and we are done. From now on, let us assume that $x\ne y$.

If $x=0$, then by adding up the two equations: $0=0\varphi(0)+y\varphi(0)+y\varphi(y)+0\varphi(y)=y\varphi(y)=1$, which is a contradiction. Similarly, $y=0$ also leads to a contradiction.

Finally, let us assume that $x\ne y$ and $x,y\ne 0$. Then by adding up the two equations we get $0=x\varphi(x)+y\varphi(y)+(x\varphi(y)+y\varphi(x))=1+1+1=1$, which is a contradiction, too.

Hence, the system satisfies property $(**)$, and $16\cdot 8\leq r_4'(4)$.

\end{proof}

\section{Acknowledgements}
C.E. was partially supported by FWF grant W1230, 
P.P.P. was supported by the National Research, Development and Innovation Office of Hungary
  (Grant Nr.  PD115978 and  K129335) and the J\'anos Bolyai Research
  Scholarship of the Hungarian Academy of Sciences.


\begin{thebibliography}{999}
\bibitem{AlonandDubiner:1993}
N. Alon, M. Dubiner, 
Zero-sum sets of prescribed size, in: 
``Combinatorics, Paul Erd\H{o}s is Eighty'', 
Bolyai Society, Mathematical Studies, Keszthely, Hungary, 1993, 33--50.

\bibitem{AlonandDubiner:1995}
N. Alon, M. Dubiner, 
A lattice point problem and additive number theory, 
Combinatorica 15 (1995), 301--309.



\bibitem{Alon-Shpilka-Umans:2013}
N.~Alon, A.~Shpilka C.~Umans,
On sunflowers and matrix multiplication.
Comput. Complexity 22 (2013), no. 2, 219--243. 

\bibitem{BatemanandKatz:2012}
M.~Bateman; N.H.~Katz,
New bounds on cap sets.
J. Amer. Math. Soc. 25 (2012), no. 2, 585--613. 

\bibitem{Blasiak-Church-etal}
J.~Blasiak, T.~Church, H.~Cohn, J.~Grochow, E.~Naslund, W.~Sawin, C.~Umans, 
On cap sets and the group-theoretic approach to matrix multiplication. 
Discrete Anal. 2017, Paper No. 3, 27 pp. 



\bibitem{Behrend:1946}
 F.A. Behrend, On sets of integers which contain no three terms 
in arithmetical progression, 
Proc. Natl. Acad. Sci. USA 32 (1946) 331--332.




\bibitem{Bourgain:1999}
J.~Bourgain, 
On triples in arithmetic progression, Geom. Funct. Anal. 9 (5) (1999) 968--984.

\bibitem{BrownandBuhler:1982}
T.C.~Brown and J.P.~Buhler, A density version of a geometric Ramsey 
theorem, J. Combin. Theory Ser. A 25 (1982), 20--34.

\bibitem{Cameron}
P. J. Cameron, Sum-free sets of a square, manuscript, available at
\url{http://www.maths.qmul.ac.uk/~pjc/odds/sfsq.pdf}

 \bibitem{Chandra:1973}
A.K. Chandra, On the solution of Moser's problem in four dimensions,
Canad. Math. Bull. 16 (1973), 507--511.

\bibitem{Chvatal:1972}
V. Chv\'{a}tal, 
Remarks on a problem of Moser, Canad. Math. Bull. 15 (1972),
19--21.

\bibitem{Chvatal:1973}
V. Chv\'{a}tal,
Edmonds polytopes and a hierarchy of combinatorial problems
Discrete Math. 4 (1973) 305-337.
Reprinted:
Discrete Mathematics 306 (2006),  886--904. 

\bibitem{CoppersmithandWinograd:1990}
D.~Coppersmith, 
S.~Winograd, Matrixmultiplicationviaarithmeticprogressions
STOC '87 (Proceedings of the nineteenth annual 
ACM symposium on Theory of computing)
Pages 1--6, also:
Journal of Symbolic Computation 9 no. 3 (1990), 251--280.


\bibitem{Croot:2008}
E.~Croot,
The minimal number of three-term arithmetic progressions modulo a prime converges to a limit.,
Canad. Math. Bull. 51 (2008), no. 1, 47--56. 


\bibitem{Croot-Lev-Pach}
E.~Croot, V.F.~Lev, P.P.~Pach,
Progression-free sets in $\Z_4^n$ are exponentially small, 
Ann. of Math. (2) 185 (2017), no. 1, 331--337. 

\bibitem{Davis-Maclagan}
B.~L.~Davis, D.~Maclagan,
The card game SET.
Math. Intelligencer 25 (2003), no. 3, 33--40. 

\bibitem{BierbrauerandEdel:2001}
Y. Edel, J.~Bierbrauer,
Large caps in small spaces.
Des. Codes Cryptogr. 23 (2001), no. 2, 197--212. 

\bibitem{Edel:2004}
Y. Edel, \emph{Extensions of generalized product caps}, Des. Codes
Cryptography
  \textbf{31} (2004), 5 -- 14.

\bibitem{Edel:2008}
Y.~ Edel, 
Sequences in abelian groups $G$ of odd order without zero-sum subsequences 
of length ${\rm exp}(G)$. Des. Codes Cryptogr. 47 (2008), no. 1-3, 125--134.

\bibitem{Edel-Elsholtz:2007}
Y.~Edel, C.~Elsholtz, A.~Geroldinger, S.~Kubertin, L.~Rackham, 
Zero-sum problems in finite abelian groups and affine caps.
Q. J. Math. 58 (2007), no. 2, 159--186. 

\bibitem{Edel-Ferret:2002}
Y.~Edel, S.~Ferret, I.~Landjev, L.~Storme: 
The classification of the largest caps in AG(5,3). 
J. Combin. Theory Ser. A 99 (2002), 95--110.

\bibitem{Elkin:2011}
M.~Elkin, An Improved Construction of Progression-Free Sets,
Israeli J. Math. 184 (2011), 93--128.

\bibitem{EllenbergandGijswijt:2016}
J.~S.~Ellenberg, D.~Gijswijt, 
On large subsets of $\F_q^n$ with no three-term arithmetic progression. Ann. of Math. (2) 185 (2017), no. 1, 339--343. 

\bibitem{Elsholtz:2004}
C.~Elsholtz, Lower bounds for multidimensional zero sums.
Combinatorica 24 (2004), no. 3, 351--358.

\bibitem{Elsholtz-Rackham:2017}
C.~Elsholtz, L.~Rackham,
Maximal sum-free sets of integer lattice grids.
J. Lond. Math. Soc. (2) 95 (2017), no. 2, 353--372. 

\bibitem{Erdos-Ginzburg-ZiV:1961}
P.~Erd\H{o}s, A.~Ginzburg, A.~Ziv,
Theorem in the additive number theory
Bull. Res. Council Israel F (10) (1961), 41--43.

\bibitem{Erdos:1973}
P.~Erd\H{o}s, Problems and results on combinatorial number theory,
in: A survey of Combinatorial Theory, J.N Srivastava et al., eds, North
Holland 1973, 117--138.

\bibitem{ErdosandTuran:1936}
P.~Erd\H{o}s, P.~Tur\'{a}n,
 On some sequences of integers.
 J. London Math. Soc. 11 (1936), 261--264.

\bibitem{Frankl-Graham-Rodl:1987}
P.~Frankl, R.~L.~Graham, V.~Rödl, On subsets of abelian groups with no 3-term arithmetic progression, J. Comb. Theory, Ser. A 45(1) (1987) 157--161.

\bibitem{Gowers:2001}
W.T. Gowers, A new proof of Szemerédi's theorem, 
Geom. Funct. Anal. 11 (3) (2001), 465--588.


\bibitem{Green:2005}
B.J. Green, \emph{Finite field models in additive combinatorics},
Surveys in
  {C}ombinatorics $2005$, London Math. Soc. Lecture Note Ser., vol. 327,
  Cambridge University Press, 2005, pp.~1--27.

\bibitem{Green-Tao:2009}
B.~Green, T.~Tao, 
New bounds for Szemer\'{e}di's theorem. 
I. Progressions of length 4 in finite field geometries.
Proc. Lond. Math. Soc. (3) 98 (2009), no. 2, 365--392. \\
and correction:
New bounds for Szemeredi's theorem, Ia: Progressions of length 4 in finite
field geometries revisited, 16 pages, arXiv:1205.1330.

\bibitem{Hales-Jewett}
A.W.~Hales, R.I.~Jewett,
Regularity and positional games.
Trans. Amer. Math. Soc. 106 (1963), 222--229. 


\bibitem{Hegedus}
G. Heged\"{u}s,
A new exponential upper bound for the Erd\H{o}s-Ginzburg-Ziv constant, 
arXiv:1712.00228.

\bibitem{Kalai:blog}
G. Kalai, Webblog, 7th February 2009,
http://gilkalai.wordpress.com/2009/02/07/frankl-rodls-theorem-and-variations-on-the-cap-set-problem-a-recent-research-project-with-roy-meshulam-a/

\bibitem{Komlos}
Koml\'{o}s, solution to problem P.170 by Leo Moser,
Canad. Math. Bull. vol. 15 (1972), 312--313.


\bibitem{Lev:2004}
V.F. Lev, \emph{Progression-free sets in finite abelian groups}, J.
Number
  Theory \textbf{104} (2004), 162--169.

\bibitem{Lin-Wolf:2010}
Y.~Lin, J. Wolf,
Subsets of $\F_q^n$ containing no $k$-term progressions,
European J. Combin., 31(5) (2010), 1398--1403.



\bibitem{Meshulam:1995}
R.~Meshulam, \emph{On subsets of finite abelian groups with no
$3$-term
  arithmetic progressions}, J. Comb. Theory, Ser. A \textbf{71} (1995), 168--172.

\bibitem{Moser:1970}
L. Moser, Problem P.170 in Canad. Math. Bull. 13 (1970), 268.

\bibitem{Naslund}
E.~Naslund, Exponential Bounds for the Erd\H{o}s-Ginzburg-Ziv constant.
https://arxiv.org/abs/1701.04942


\bibitem{Newcombe:2008}
L. Newcombe, MSc Thesis, Royal Holloway, 2008

\bibitem{Petrov-and-Pohoato:2018} 
F.~Petrov, C.~Pohoata,
 Improved Bounds for Progression-Free Sets in $C_8^n$, 
arXiv:1805.05549.

\bibitem{Polymath}
D.~J.~H.~Polymath,
Density Hales-Jewett and Moser numbers. in: {\em An irregular mind}, 689--753,
Bolyai Soc. Math. Stud., 21, J\'{a}nos Bolyai Math. Soc., Budapest, 2010. 

\bibitem{Potechin:2008}
A.~Potechin,
Maximal caps in AG(6,3). 
Des. Codes Cryptogr. 46 (2008), no. 3, 243--259. 

\bibitem{Rankin:1960}
R.A.~Rankin,
Representations of a number as the sum of a large number of squares.
Proc.~Roy.~Soc.~Edinburgh Sect. A 65 1960/1961, 318--331. 

\bibitem{Reiher:2007}
C. Reiher, On Kemnitz' conjecture concerning lattice-points in the plane. 
The Ramanujan Journal. 13: 333--337. 

\bibitem{Riddel:1971}
J. Riddel, A lattice point problem related to sets containing no $l$-term
arithmetic progression, Canad Math. Bull. 14 (1971), 535--538.

\bibitem{Roth:1953} 
K.F. Roth, On certain sets of integers, 
J. Lond. Math. Soc. (2) 28 (1) (1953) 104--109.

\bibitem{SalemandSpencer:1942}
R.~Salem, D.~C.~Spencer,
On sets of integers which contain no three terms in arithmetical progression.
Proc. Nat. Acad. Sci. U. S. A. 28, (1942). 561--563. 

\bibitem{Sanders:2009}
T.~Sanders,
Roth's theorem in $\Z_4^n$. Anal. PDE 2 (2009), no. 2, 211--234. 

\bibitem{Sanders:2011}
T.~Sanders, 
On Roth's theorem on progressions, Ann. of Math. (2) 174 (1) (2011) 619--636.

\bibitem{Shannon:1956}
C.E.~Shannon, The zero-Error capacity of a noisy channel, IRE
Trans. Inform. Theory. 2. (1956), 8--19.

\bibitem{Szemeredi:1975}
E.~Szemer\'{e}di, 
 On sets of integers containing no $k$ elements in arithmetic progression.
 Acta Arith. 27 (1975), 199--245.


\bibitem{Tao:2007}
T. Tao, Webblog, 23rd February 2007.
http://terrytao.wordpress.com/2007/02/23/open-question-best-bounds-for-cap-sets/\\
Open question: best bounds for cap sets

\bibitem{TaoandVu:2006}
T. Tao, V. Vu,
 Additive Combinatorics, Cambridge University Press, 2006.

\bibitem{Williams:2012}
V.~Vassilevska Williams,
Multiplying matrices faster than Coppersmith-Winograd,
STOC'12--Proceedings of the 2012 ACM Symposium on Theory of Computing, 887--898, ACM, New York, 2012. 
\bibitem{Wolf:2015}
J.~Wolf, Finite field models in arithmetic combinatorics--ten years on. 
Finite Fields Appl. 32 (2015), 233--274.

\end{thebibliography}
\end{document}